\newtheorem{theorem}{Theorem}[section]
\newtheorem{lemma}[theorem]{Lemma}
\newtheorem{proposition}[theorem]{Proposition}
\theoremstyle{remark}
\newcommand{\ZZ}{\mathbb{Z}}
\title{Visible lattice points in higher dimensional random walks and biases among them}
\author{Kui Liu}
\address{School of Mathematics and Statistics, Qingdao University, 308 Ningxia Road, Shinan District, Qingdao, Shandong, China}
\email{liukui@qdu.edu.cn}
\author{Meijie Lu}
\address{School of Mathematics, Shandong University, Jinan 250100, Shandong, China}
\email{meijie.lu@hotmail.com}
\author{Xianchang Meng}
\address{School of Mathematics, Shandong University, Jinan 250100, Shandong, China}
\email{xianchang.meng@gmail.com}
\subjclass[2010]{60F15, 60G50, 11N37}
\keywords{Random walk, Visible lattice points, Greatest common divisor}
\begin{document}

\maketitle

\begin{abstract}
For any integers $k\geq 2$, $q\geq 1$ and any finite set $\mathcal{A}=\{{\boldsymbol{\alpha}}_1,\cdots,{\boldsymbol{\alpha}}_q\}$, where ${ \boldsymbol{\alpha}_t}=(\alpha_{t,1},\cdots,\alpha_{t,k})~(1\leq t\leq q)$ with $0<\alpha_{t,1},\cdots,\alpha_{t,k}<1$ and $\alpha_{t,1}+\cdots+\alpha_{t,k}=1$, this paper concerns the visibility of lattice points in the type-$\mathcal{A}$ random walk on the lattice $\mathbb{Z}^k$.  We show that the proportion of visible lattice points on a random path of the walk is almost surely $1/\zeta(k)$, where $\zeta(s)$ is the Riemann zeta-function, and  we also consider consecutive visibility of lattice points in the type-$\mathcal{A}$ random walk and give the  proportion of the corresponding visible steps. Moreover, we find a new phenomenon that visible steps in both of the above cases are not evenly distributed. Our proof relies on tools from probability theory and analytic number theory.
\end{abstract}

\section{Introduction}

\subsection{Background}
Let $k\geq 2$ be any integer and ${\mathbb{Z}^{k}}$ be the $k$-dimensional integer lattice. A lattice point ${\bf n}=(n_1,\cdots,n_k)\in\mathbb{Z}^k$ is said to be \textit{visible} if there is no other lattice point on the straight line segment connecting ${\bf n}$ and the origin. Dirichlet \cite{D} (see also \cite{S}) showed that the density of visible lattice points in $\mathbb{Z}^2$ is $1/\zeta(2)$, where $\zeta(s)$ is the Riemann zeta function. Lehmer \cite{L} (see also \cite{C}) generalized Dirichlet's result to higher dimensional lattices. Visibility of lattice points along certain type of curves has also been considered, see for example \cite{BEH, GHKM, LM-2, LLM}. One may refer to \cite{CTZ, HO, R-thesis, R} for more related works.

The visibility of lattice points can also be considered from the view of random walks. Given integer $q\geq 1$ and any finite set $\mathcal{A}=\{{\boldsymbol{\alpha}}_1,\cdots,{\boldsymbol{\alpha}}_q\}$, where ${ \boldsymbol{\alpha}_t}=(\alpha_{t,1},\cdots,\alpha_{t,k})~(1\leq t\leq q)$ with $0<\alpha_{t,1},\cdots,\alpha_{t,k}<1$ and $\alpha_{t,1}+\cdots+\alpha_{t,k}=1$. On the lattice ${\mathbb{Z}^k}$, we define
a type-$\mathcal{A}$ random walk started from the origin ${\bf p}_0=(0,\cdots,0)$ by
\begin{align}\label{the definition of random walk}
{\bf p}_i={\bf p}_{i-1}+{\bf w}({\boldsymbol{\alpha}_i^{\prime}}),\ {\boldsymbol{\alpha}_i^{\prime}}\in \mathcal{A}
\end{align}
for $i=1,\ 2,\cdots$, where ${\bf p}_i=(p_{i,1},\cdots,p_{i,k})$ is the coordinate of the $i$-th step and
\begin{align*}
{\bf w}({\boldsymbol{\alpha}}):=\left\{
\begin{aligned}
&(1,0,\cdots,0),\ \ {\rm{with\ probability}}\ \alpha_1,\\
&(0,1,\cdots,0),\ \ {\rm{with\ probability}}\ \alpha_2,\\
&\quad\quad\quad\quad\cdots\\
&(0,0,\cdots,1),\ \ {\rm{with\ probability}}\ \alpha_k,\\
\end{aligned}
\right.
\end{align*}
is called a type-${\boldsymbol{\alpha}}$ step
 for some ${\boldsymbol{\alpha}}=(\alpha_1,\cdots,\alpha_k)$ randomly chosen from $\mathcal{A}$ at each step.
 When $\mathcal{A}=\{\boldsymbol{\alpha}\}$ has only one element, we simply call it a type-$\boldsymbol{\alpha}$ random walk.

 In 2015, Cilleruelo, Fern\'{a}ndez and Fern\'{a}ndez \cite{CFF} proved that the proportion of visible lattice points on a random path of a type-${\boldsymbol{\alpha}}$ random walk on $\mathbb{Z}^2$ is almost surely $1/\zeta(2)$. Recently, Liu and Meng \cite{LM-1} generalized their result to the cases of visible points along curves and multiple random walkers.

 In this paper, for any finite set $\mathcal{A}$ and any $k\geq 2$, we study the visibility of lattice points in the type-$\mathcal{A}$ random walk in $\ZZ^k$. In order to deal with all the higher dimensional cases $k\ge 2$, we propose a different approach than that in \cite{CFF} and give the proportion of visible steps in type-$\mathcal{A}$ random walks. Our results are more general that the random variables ${\bf w}({\boldsymbol{\alpha}})$ may have different distributions at every step, while previous results only concern ${\bf w}$ with the same distribution.

\subsection{Our results}
Associated to the type-$\mathcal{A}$ random walk defined in \eqref{the definition of random walk}, we consider a sequence of random variables $(X_i)_{i\geq1}$ with
$$
X_i:=\left\{
\begin{aligned}
&1,\ \ {\rm{if}}\ {\bf p}_i\ \rm{is\ visible},\\
&0,\ \ \rm{otherwise.}
\end{aligned}
\right.
$$
The random variable
$$
\overline{S}_{n,k}:=\frac{1}{n}(X_1+\cdots+X_n)
$$
indicates the proportion of visible steps in a type-$\mathcal{A}$ random walk in the first $n$ steps.
\begin{theorem}\label{thm:S_n=}
For any integer $k\geq 2$, we have
$$
\lim_{n\rightarrow\infty} \overline{S}_{n,k}=\frac{1}{\zeta(k)}
$$
almost surely, where $\zeta(s)$ is the Riemann zeta function.
\end{theorem}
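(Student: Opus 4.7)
The plan is to compute the first moment of $\overline{S}_{n,k}$ and then promote convergence in expectation to almost-sure convergence via a second-moment / Borel--Cantelli argument.

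For the first moment, I would apply Möbius inversion to $X_i = \Id[\gcd(p_{i,1},\ldots,p_{i,k})=1]$:
$$\mathbb{E}[X_i] = \sum_{d \geq 1} \mu(d)\,\mathbb{P}\bigl[d \mid p_{i,\ell} \text{ for all } \ell\bigr].$$
Because $p_{i,1}+\cdots+p_{i,k}=i$, any common divisor $d$ forces $d \mid i$, so only finitely many terms contribute. I would then evaluate each probability by additive characters modulo $d$: since $\Id[d\mid n] = d^{-1}\sum_{a=0}^{d-1} e(an/d)$ with $e(x)=e^{2\pi i x}$, and the $i$ steps are mutually independent, the joint probability factors into a product of step-by-step generating functions
$$f_m(a_1,\ldots,a_k;d) := \sum_{\ell=1}^{k} (\boldsymbol{\alpha}'_m)_\ell\,e(a_\ell/d),$$
where $(\boldsymbol{\alpha}'_m)_\ell$ denotes the probability that step $m$ moves in direction $\ell$. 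Each $f_m$ is a convex combination of unit-modulus complex numbers, so $|f_m|\leq 1$ with equality iff all $a_\ell$ coincide modulo $d$. Summing over the diagonal $a_1=\cdots=a_k$ yields the main term $\Id[d\mid i]/d^{k-1}$, while off-diagonal terms are bounded by $(1-c_d)^i$ for some $c_d>0$ (which exists uniformly in $m$ because $\mathcal A$ is finite and all its entries are strictly positive). Averaging over $i\leq n$ and exchanging summations,
$$\mathbb{E}[\overline{S}_{n,k}] \longrightarrow \sum_{d \geq 1} \frac{\mu(d)}{d^k} = \frac{1}{\zeta(k)}.$$

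For almost-sure convergence I would bound $\mathrm{Var}(\overline{S}_{n,k})$ by the analogous Fourier analysis applied to the joint events $\{d_1 \mid p_{i,\ell}\ \forall\ell\}\cap\{d_2 \mid p_{j,\ell}\ \forall\ell\}$ with $i<j$. Using independence of the increment ${\bf p}_j-{\bf p}_i$ from ${\bf p}_i$, the joint generating function splits into a product over steps $[1,i]$ and a product over $(i,j]$. After Möbius summation the principal diagonal contributes $\mathbb{E}[X_i]\mathbb{E}[X_j]$ and the off-diagonal error decays exponentially in $\min(i,j-i)$, making $\mathrm{Cov}(X_i,X_j)$ summable and hence $\mathrm{Var}(\overline{S}_{n,k}) = O(1/n)$. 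Chebyshev's inequality along the subsequence $n_m=m^2$ combined with Borel--Cantelli then gives $\overline{S}_{m^2,k}\to 1/\zeta(k)$ almost surely; a sandwich between consecutive squares, using $X_i\in\{0,1\}$ so that $\tfrac{m^2}{n}\overline{S}_{m^2,k}\leq \overline{S}_{n,k}\leq \tfrac{(m+1)^2}{n}\overline{S}_{(m+1)^2,k}$ for $m^2\leq n<(m+1)^2$, extends the convergence to the full sequence.

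The main obstacle is the covariance estimate: the double character sum carries \emph{secondary diagonals} where $|f_m|=1$ off the obvious diagonal whenever $\gcd(d_1,d_2)>1$. These contributions must be organised (for example by fibering over $\mathrm{lcm}(d_1,d_2)$ and exploiting that the support of $\mu$ is squarefree) and matched against $\mathbb{E}[X_i]\mathbb{E}[X_j]$ with enough error savings. Secondary technical issues are keeping the decay rate $c_d$ under control as $d$ grows and justifying the interchange of the sum over $d$ with the limit; both are manageable because only $d\leq\min(i,j)$ contribute and $\sum_d |\mu(d)|/d^k$ converges absolutely for $k\geq 2$.
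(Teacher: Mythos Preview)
Your strategy is the paper's strategy: M\"obius inversion plus additive characters for the first moment, a second-moment bound for the variance, then a subsequence Borel--Cantelli argument (the paper packages this last step as Lemma~\ref{lem:lim S_n=u}). The genuine gap is your treatment of the off-diagonal character sum. The bound $|f_m|\le 1-c_d$ you invoke has $c_d\asymp 1/d^2$ (the worst case is a single nonzero $h_j=1$, where $|\alpha e(1/d)+(1-\alpha)|=1-O(1/d^2)$), so $(1-c_d)^i\approx e^{-Ci/d^2}$ gives nothing once $d\gtrsim\sqrt{i}$. Since the M\"obius sum runs over all $d\mid i$, you are left with roughly $\tau(i)$ terms each of size $O(1)$, and after averaging over $i\le n$ the error does not vanish; the convergence of $\sum|\mu(d)|/d^k$ that you cite controls the \emph{main} term, not this error. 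What is actually needed is an $L^1$-type estimate on the off-diagonal sum that is \emph{uniform in $d$}: the paper proves (Lemmas~\ref{lem:estimation of cos^lx}--\ref{lem:the sum in l dimensions}) that $\frac{1}{d^{k-1}}\sum_{h\neq 0}\prod_m|f_m(h)|=O(\log n/\sqrt{n})$ independently of $d$, by comparing the inner sum to an integral of $\cos^l$. Equivalently, you could salvage your approach by stratifying the $h$-sum according to $\max_j\|h_j/d\|$ and optimising, but the pointwise bound alone is insufficient.

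Your worry about ``secondary diagonals'' in the covariance is real but less central than you suggest, and the paper avoids it entirely. Once the first-moment lemma is proved for an \emph{arbitrary} congruence target $g_a$ (not just $g_a=0$), the second moment factorises cleanly: conditioning on $p_i$, the inner sum over the increment $p_j-p_i$ is exactly the first-moment lemma applied with $g_a=-p_{i,a}$, giving $1/d_2^{k-1}+O((j-i)^{-1/2+\varepsilon})$ uniformly in the conditioning. This yields $\mathbb V(\overline S_{n,k})=O(n^{-1/2+\varepsilon})$, not $O(1/n)$ (the covariance decays like $(j-i)^{-1/2+\varepsilon}$, which is not summable), but that is still ample for the subsequence argument.
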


When $k=2$ and $\mathcal{A}$ has only one element, Theorem \ref{thm:S_n=} gives Theorem A of \cite{CFF}.

For fixed integer $m\geq 2$, define $$
\overline{S}_{n,k}(a;m):=\frac{1}{n}\sum_{\substack{1\leq i\leq n\\i\equiv a(\bmod m)} }X_i
$$
for any integer $0\leq a\leq m-1$, then $\overline{S}_{n,k}(a;m)$ is the proportion of visible steps that are congruent to $a\bmod m$ in the first $n$ steps in a type-$\mathcal{A}$ random walk. Let
$$
\delta_{k}(a;m):=\lim\limits_{n\rightarrow\infty}\overline{S}_{n,k}(a;m),
$$
if the limit exists almost surely. One may expect that $\delta_{k}(a;m)$ is $1/m\zeta{(k)}$ for any $a$. However, we find that this is not true, namely, the visible steps are not evenly distributed. This is a surprisingly new phenomenon. We calculate some values of $\delta_k$ for certain types of $m$ and obtain the following results.
\begin{theorem}\label{thm:delta(a,m)=} For any integers $r\geq 1$ and $k\geq 2$, we have
\begin{align*}
\delta_{k}(a;2^r)=\begin{cases}
\dfrac{2^{k-r}}{2^k-1}\cdot\dfrac{1}{\zeta(k)},\quad &\text{if}~a~\text{is odd};\\
\dfrac{2^{k-1}-1}{2^{r-1}(2^k-1)}\cdot\dfrac{1}{\zeta(k)},\quad &\text{if}~a~\text{is even}
\end{cases}
\end{align*}
almost surely, and for any prime $p_1\geq 3$
\begin{align*}
\delta_{k}(a;p_1)=\begin{cases}\dfrac{p_1^{k-1}-1}{p_1^k-1}\cdot\dfrac{1}{\zeta(k)},\quad &\textit{if}~a=0;\\
\dfrac{p_1^{k-1}}{p_1^k-1}\cdot\dfrac{1}{\zeta(k)},\quad &\text{otherwise}
\end{cases}
\end{align*}
almost surely.
\end{theorem}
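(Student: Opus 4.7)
The plan is to first prove that the limit $\lim_{n\to\infty}E[\overline{S}_{n,k}(a;m)]$ equals the claimed value of $\delta_k(a;m)$ by an analytic-number-theoretic calculation, and then upgrade this to almost-sure convergence through a second-moment estimate. The arithmetic heart is the evaluation of a Dirichlet-type series over squarefree integers, restricted by the congruence $i\equiv a\pmod m$.

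\textbf{Computing the expectation.} By M\"obius inversion together with the identity $p_{i,1}+\cdots+p_{i,k}=i$,
\[
X_i=\sum_{d\mid i}\mu(d)\,\mathbf{1}[d\mid p_{i,j}\ \text{for all}\ j].
\]
Expanding the divisibility indicator through additive characters mod $d$ and using independence of the steps gives
\[
P(d\mid {\bf p}_i)=\frac{1}{d^k}\sum_{{\bf y}\in(\ZZ/d\ZZ)^k}\prod_{\ell=1}^{i}\Bigl(\sum_{j=1}^{k}\alpha_{\ell,j}\,e^{2\pi\mathrm{i}\,y_j/d}\Bigr),
\]
where $\alpha_{\ell,j}$ denotes the direction-$j$ probability used at step $\ell$. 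The diagonal characters ${\bf y}=(c,\ldots,c)$ contribute exactly $\mathbf{1}[d\mid i]/d^{k-1}$ regardless of the schedule; for non-diagonal ${\bf y}$ the finiteness of $\mathcal{A}$ and strict positivity of every $\alpha$-entry force each inner factor to have modulus at most $1-c_d$ with $c_d\gg_{\mathcal{A}}d^{-2}$, giving exponential decay in $i/d^2$. Consequently $E[X_i]=\prod_{p\mid i}(1-p^{-(k-1)})+o(1)$. Averaging over $i\leq n$ with $i\equiv a\pmod m$, interchanging the order of summation, and using the elementary count $\#\{i\leq n:d\mid i,\ i\equiv a\pmod m\}=n\gcd(d,m)/(dm)+O(1)$ when $\gcd(d,m)\mid a$ (and $0$ otherwise), one obtains
\[
\lim_{n\to\infty}E[\overline{S}_{n,k}(a;m)]=\frac{1}{m}\sum_{\substack{d\geq 1\\ \gcd(d,m)\mid a}}\frac{\mu(d)\gcd(d,m)}{d^k}.
\]

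\textbf{Evaluating the series and almost-sure upgrade.} For $m=2^r$ I will split by parity of $d$: when $a$ is odd only odd $d$ survive, giving $2^{-r}\prod_{p\geq 3}(1-p^{-k})=2^{k-r}/[(2^k-1)\zeta(k)]$; when $a$ is even the even-$d$ part equals $-2^{1-k}$ times the odd-$d$ part via the substitution $d=2d'$, producing the extra factor $(2^{k-1}-1)/2^{k-1}$ and the second formula. For $m=p_1$ an odd prime I will split by whether $p_1\mid d$ and invoke $\prod_{p\neq p_1}(1-p^{-k})=p_1^k/[(p_1^k-1)\zeta(k)]$ to recover both remaining cases. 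For the almost-sure statement I will expand $E[X_iX_j]$ through a two-variable M\"obius--character identity, using the decomposition ${\bf p}_j={\bf p}_i+({\bf p}_j-{\bf p}_i)$ with independent increment, deduce $\mathrm{Var}(\overline{S}_{n,k}(a;m))\ll_{\varepsilon}n^{-1+\varepsilon}$, and conclude via Chebyshev along a polynomially dense subsequence with monotonicity filling the gaps. The principal technical obstacle is uniform control of the character-sum error across divisors $d\mid i$: the decay rate $c_d\asymp d^{-2}$ degenerates once $d\gtrsim\sqrt{i}$, which forces a truncation at $d\leq i^{1/2-\delta}$ together with a tail bound on $P(\gcd({\bf p}_i)>D)$ obtained from a local central limit theorem for the underlying multinomial distribution.
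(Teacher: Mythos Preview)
Your overall architecture---compute $E[\overline S_{n,k}(a;m)]$ via M\"obius plus additive characters, evaluate the resulting Dirichlet series by splitting on $\gcd(d,m)$, then upgrade to almost-sure convergence by a second-moment argument---matches the paper exactly, and your series evaluations for $m=2^r$ and $m=p_1$ are correct. The substantive difference is in how the non-principal character contribution is controlled. You bound each factor $\bigl|\sum_j\alpha_j e(y_j/d)\bigr|$ pointwise by $1-c_d$ with $c_d\asymp d^{-2}$, which forces you into a truncation at $d\le i^{1/2-\delta}$ and a separate tail estimate on $P(\gcd(\mathbf p_i)>D)$ via a local CLT. The paper instead \emph{averages} over the characters: reducing the inner sum to a one-variable expression and showing
\[
\frac{1}{d}\sum_{1\le h\le d-1}\Bigl(1-\delta+\delta\cos\tfrac{2\pi h}{d}\Bigr)^{n/2}=O_\delta\!\Bigl(\frac{\log n}{\sqrt n}\Bigr)
\]
\emph{uniformly in $d$}, which yields $E[X_i]=\sum_{d\mid i}\mu(d)d^{-(k-1)}+O(i^{-1/2+\varepsilon})$ directly, with no truncation and no CLT input. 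This is exactly the obstacle the authors flag in the introduction when they remark that the local-CLT route of Cilleruelo--Fern\'andez--Fern\'andez ``is not ready to generalize to higher dimensions''; your proposal re-introduces a (marginal) CLT step that the paper's averaging lemma is designed to avoid. Your route can probably be pushed through---the tail only needs a one-dimensional concentration bound on a single coordinate---but it is more laborious. One correction: the variance bound you should expect from either method is $O(n^{-1/2+\varepsilon})$, not $O(n^{-1+\varepsilon})$; the cross term $E[X_iX_j]$ carries an error of size $(j-i)^{-1/2+\varepsilon}$ coming from the increment $\mathbf p_j-\mathbf p_i$, and summing this over $i<j\le n$ gives $n^{3/2+\varepsilon}$ before dividing by $n^2$. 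That weaker bound still suffices for the almost-sure conclusion.
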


We are also interested in consecutive visible steps in the type-$\mathcal{A}$ random walk. Define the random variable
$$
\overline{R}_{n,k}:=\frac{1}{n}(X_1X_2+\cdots+X_nX_{n+1}).
$$
Then $\overline{R}_{n,k}$ is the proportion of two consecutive visible steps in a type-$\mathcal{A}$ random walk in the first $n+1$ steps.

\begin{theorem}\label{thm:R_n=}
For any integer $k\geq 2$, we have
$$
\lim_{n\rightarrow\infty}\overline{R}_{n,k}=\prod_p\bigg(1-\frac{2}{p^k}\bigg)
$$
almost surely, where $p$ runs over all primes.
\end{theorem}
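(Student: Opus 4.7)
The strategy is to parallel the proof of Theorem~\ref{thm:S_n=}, with the single indicator $X_i$ replaced by the product $X_iX_{i+1}$. The central analytic input is the asymptotic mean value of $E[X_iX_{i+1}]$, for which I would use M\"obius inversion:
\[
E[X_iX_{i+1}] = \sum_{\substack{d_1,d_2\\ \text{squarefree}}} \mu(d_1)\mu(d_2)\, P\big(d_1\mid \mathbf{p}_i,\ d_2\mid \mathbf{p}_{i+1}\big).
\]
Since $\mathbf{p}_{i+1}=\mathbf{p}_i+\mathbf{e}_m$ for some coordinate direction $m$, the joint divisibility forces $d_1\mid p_{i,m}$ and $d_2\mid p_{i,m}+1$, which is impossible unless $\gcd(d_1,d_2)=1$. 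When $d_1,d_2$ are coprime, the Chinese remainder theorem collapses the condition to $\mathbf{p}_i\equiv \mathbf{r}^{(m)}\pmod{d_1d_2}$ for an explicit residue vector $\mathbf{r}^{(m)}$ whose only nonzero coordinate is at position $m$; crucially, $\sum_j r^{(m)}_j$ depends only on $(d_1,d_2)$ and not on $m$.

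Next, a character-sum/Fourier argument exploiting the multinomial structure of $\mathbf{p}_i$ (with finiteness of $\mathcal{A}$ providing a uniform spectral gap across the admissible distributions) yields the equidistribution
\[
P\big(\mathbf{p}_i\equiv \mathbf{r}\pmod D\big)=\frac{1}{D^{k-1}}\cdot\mathbf{1}\big[D\mid i-\textstyle\sum_j r_j\big]+O_D\big(\lambda(D)^i\big),\quad \lambda(D)<1,
\]
uniformly in $\mathbf{r}$. Inserting this into the M\"obius expansion, summing over $m$ weighted by the step-direction probabilities (which sum to $1$), Ces\`aro-averaging so that the arithmetic-progression indicator becomes the density $1/(d_1d_2)$, and truncating the $(d_1,d_2)$-sum at $d_1d_2\le Y$ to justify interchange with the $n$-limit, one arrives at
\[
\lim_{n\to\infty}\frac{1}{n}\sum_{i=1}^n E[X_iX_{i+1}]=\sum_{\substack{d_1,d_2\ \text{sqfree}\\ \gcd(d_1,d_2)=1}}\frac{\mu(d_1)\mu(d_2)}{(d_1d_2)^k}=\sum_{d\ \text{sqfree}}\frac{2^{\omega(d)}\mu(d)}{d^k}=\prod_p\!\Big(1-\frac{2}{p^k}\Big),
\]
the middle equality using that each squarefree $d$ admits exactly $2^{\omega(d)}$ coprime factorizations $d=d_1d_2$, each contributing $\mu(d_1)\mu(d_2)=\mu(d)$.

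To upgrade this $L^1$ convergence to almost-sure convergence, I would estimate $\mathrm{Var}(\overline{R}_{n,k})$ by showing that for $|j-i|\ge 2$,
\[
E\big[X_iX_{i+1}X_jX_{j+1}\big]=E[X_iX_{i+1}]\,E[X_jX_{j+1}]+O\big(\lambda^{|j-i|}\big),
\]
via the same M\"obius/CRT machinery applied to the independent increment $\mathbf{p}_j-\mathbf{p}_{i+1}$. This gives $\mathrm{Var}(\overline{R}_{n,k})=O(1/n)$, and a Chebyshev/Borel--Cantelli argument along the sparse subsequence $n_\ell=\ell^2$, together with the deterministic interpolation bound $|\overline{R}_{n_{\ell+1},k}-\overline{R}_{n_\ell,k}|=O(1/\ell)$, then delivers almost-sure convergence. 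The main technical obstacle I anticipate is the joint truncation: one must choose $Y=Y(n)\to\infty$ slowly enough that the equidistribution error (which deteriorates as $\lambda(D)\to 1^{-}$ with $D$) remains negligible, yet fast enough that the truncation tail $\sum_{d>Y}2^{\omega(d)}/d^k$ vanishes---both are achievable since the relevant sum converges absolutely for $k\ge 2$.
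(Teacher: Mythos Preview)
Your overall architecture---M\"obius inversion, the coprimality of $d_1,d_2$ forced by $d_1\mid p_{i,m}$ and $d_2\mid p_{i,m}+1$, CRT reduction to a single modulus $d_1d_2$, equidistribution of $\mathbf{p}_i$ modulo $D$, and a second-moment/subsequence argument for almost-sure convergence---matches the paper's proof (Propositions~\ref{prop:E(R_n)=} and~\ref{prop:V(R_n)=} together with Lemma~\ref{lem:lim S_n=u}), and your main-term computation is the same as Lemma~\ref{lem:sum_{d_1d_2<n}=}.

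The substantive difference is in the equidistribution input. You quote a bound $P(\mathbf{p}_i\equiv\mathbf{r}\pmod D)=D^{-(k-1)}\mathbf{1}[\,\cdot\,]+O_D(\lambda(D)^i)$ with $\lambda(D)<1$ depending on $D$, and then absorb the $D$-dependence by truncating at $d_1d_2\le Y$. The paper instead proves (Lemma~\ref{lem:the sum in l dimensions}) that the error is $O(\log i/\sqrt{i})$ \emph{uniformly in $D$}; this is its central technical lemma, and it removes the need for truncation altogether, since one may sum directly over all $d_1\mid i$, $d_2\mid i+1$ at the cost of a harmless $\tau(i)\tau(i+1)$ factor. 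Your route is workable, but the paper's uniform bound is both sharper and cleaner.

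One genuine caution: your covariance estimate
\[
E[X_iX_{i+1}X_jX_{j+1}]-E[X_iX_{i+1}]\,E[X_jX_{j+1}]=O(\lambda^{|j-i|})
\]
for a \emph{fixed} $\lambda<1$ is too strong as stated. After M\"obius expansion the sum over moduli is infinite and $\lambda(D)\to 1^-$ as $D\to\infty$, so no single exponential rate can survive; the ``uniform spectral gap'' furnished by finiteness of $\mathcal{A}$ is uniform over the step distributions, not over $D$. With truncation you will recover only a polynomial decay of order roughly $(j-i)^{-1/2+\varepsilon}$ (exactly what the paper obtains via the uniform lemma), giving $\mathrm{Var}(\overline{R}_{n,k})=O(n^{-1/2+\varepsilon})$ rather than $O(1/n)$. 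This weaker bound still feeds into Lemma~\ref{lem:lim S_n=u} (equivalently your Borel--Cantelli argument along $n_\ell=\ell^2$), so the proof goes through once this claim is corrected.
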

When $k=2$ and  $\mathcal{A}$ has only one element, Theorem \ref{thm:R_n=} gives Theorem B of \cite{CFF}.

Similarly, for any fixed integer $m\geq 2$ and $0\leq a\leq m-1$, define $$
\overline{R}_{n,k}(a;m):=\frac{1}{n}\sum_{\substack{1\leq i\leq n\\i\equiv a(\bmod m)} }X_iX_{i+1},
$$
which indicates the proportion of two consecutive visible steps that are congruent to $a\bmod m$ and $a+1\bmod m$ in the first $n+1$ steps in a type-$\mathcal{A}$ random walk. Let
$$
\gamma_{k}(a;m):=\lim\limits_{n\rightarrow\infty}\overline{R}_{n,k}(a;m),
$$
if the limit exists almost surely.
We have the following results.
\begin{theorem}\label{thm:gamma_k{a;m}=}For any integers $r\geq 1$ and $k\geq 2$, we have, for all $0\leq a\leq 2^r-1$,
$$
\gamma_{k}(a;2^r)=\frac{1}{2^r}\prod_p\bigg(1-\frac{2}{p^k}\bigg)
$$
almost surely, and for any prime $p_1\geq 3$

\[
\gamma_{k}(a;p_1)=\begin{cases}
\dfrac{p_1^{k-1}-1}{p_1^k-2}{\displaystyle\prod_p}\bigg(1-\dfrac{2}{p^k}\bigg), & \text{if}~ a=0~\text{or}~p_1-1;\\

\dfrac{p_1^{k-1}}{p_1^k-2}{\displaystyle\prod_p}\bigg(1-\dfrac{2}{p^k}\bigg), &\text{otherwise}

\end{cases}
\]
almost surely, where $p$ runs over all primes.
\end{theorem}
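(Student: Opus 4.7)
The plan is to follow the template established for Theorems~\ref{thm:R_n=} and~\ref{thm:delta(a,m)=}: first compute the Ces\`aro limit of
\[
\frac{1}{n}\sum_{\substack{1\le i\le n\\ i\equiv a(\bmod m)}}\mathbb{E}[X_iX_{i+1}]
\]
via M\"obius inversion and a prime-by-prime local probability calculation, and then promote expectation to almost sure convergence by a second-moment/Borel--Cantelli argument along a geometric subsequence combined with monotonicity interpolation. Two observations drive the mean computation. First, visibility is a multiplicative local condition,
\[
X_iX_{i+1}=\sum_{d_1,d_2\ge 1}\mu(d_1)\mu(d_2)\,\mathbf{1}[d_1\mid{\bf p}_i]\,\mathbf{1}[d_2\mid{\bf p}_{i+1}].
\]
Second, since ${\bf p}_{i+1}-{\bf p}_i$ is a coordinate unit vector, no prime $p$ can divide both ${\bf p}_i$ and ${\bf p}_{i+1}$, so the local factor at each prime collapses to $1-\mathbf{1}[p\mid{\bf p}_i]-\mathbf{1}[p\mid{\bf p}_{i+1}]$.

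Next, for each prime $p$ the identity $\sum_j p_{i,j}=i$ forces $\mathbf{1}[p\mid{\bf p}_i]=0$ unless $p\mid i$, and a characteristic-function argument on the independent lattice-valued increments (the same tool already used for Theorems~\ref{thm:S_n=} and~\ref{thm:delta(a,m)=}) shows that when $p\mid i$ one has $\Pr(p\mid{\bf p}_i)\to p^{1-k}$ as $i\to\infty$, uniformly in the choices ${\boldsymbol{\alpha}}_i'\in\mathcal{A}$. Averaging the local factors over $i$ in a fixed residue class modulo $m$, truncating the M\"obius expansion at scale $P$ (tail controlled by $\sum_{d>P}d^{1-k}\to 0$), and combining via CRT over distinct primes yields
\[
\lim_{n\to\infty}\frac{m}{n}\sum_{\substack{i\le n\\ i\equiv a(\bmod m)}}\mathbb{E}[X_iX_{i+1}]=\prod_{p}F_p(a,m),
\]
where $F_p(a,m)$ is the averaged local factor at $p$ inside the class $\{i\equiv a(\bmod m)\}$. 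Evaluating $F_p(a,m)$ is then a routine case analysis. For $m=2^r$: at $p=2$, exactly one of $i,i+1$ is even regardless of $a$, so $F_2=1-2^{1-k}=1-2/2^k$; at odd $p$ the residue $i\bmod p$ equidistributes, giving $F_p=1-2/p^k$. Multiplying by the density $1/2^r$ of the residue class yields $\gamma_k(a;2^r)=2^{-r}\prod_p(1-2/p^k)$. For $m=p_1$: only the factor at $p=p_1$ depends on $a$, equal to $1-p_1^{1-k}$ when $a\equiv 0$ or $-1\pmod{p_1}$ and $1$ otherwise; other primes still contribute $1-2/p^k$. Writing $\prod_{p\ne p_1}(1-2/p^k)=\prod_p(1-2/p^k)/(1-2/p_1^k)$ and dividing by $p_1$ converts the two cases to the stated closed forms.

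For the almost sure statement I would bound $\mathrm{Cov}(X_iX_{i+1},X_jX_{j+1})$ by splitting into a diagonal regime $|i-j|\le M$ (contributing $O(M/n)$ to the variance) and an off-diagonal regime in which the gcd events at primes $p\le P$ are asymptotically independent as $|i-j|\to\infty$; quantitative independence follows from a uniform equidistribution estimate for $({\bf p}_i,{\bf p}_{i+1})$ modulo $P!$. The resulting $o(1)$ variance along a geometric subsequence $n_j$ yields a.s.\ convergence by Borel--Cantelli and monotonicity. The main obstacle I anticipate is precisely this \emph{uniformity}: because $\mathcal{A}$ may contain several step distributions used in an arbitrary (possibly adversarial) order, the equidistribution of ${\bf p}_i$ modulo prime powers has to be established with error terms independent of the sequence $(\boldsymbol{\alpha}_i')$; this is the core technical ingredient already built into the paper's machinery for the earlier theorems, which I would reuse here.
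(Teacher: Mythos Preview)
Your proposal is correct and follows essentially the same route as the paper. Both arguments expand $X_iX_{i+1}$ by M\"obius inversion, use the key uniform equidistribution input (Lemma~\ref{lem:the sum in l dimensions}) to evaluate $\Pr(d_1\mid{\bf p}_i,\,d_2\mid{\bf p}_{i+1})$ independently of the sequence $(\boldsymbol{\alpha}_i')$, reduce the sum over $i\equiv a\pmod m$ via CRT, and upgrade to almost sure convergence by a second-moment/subsequence argument (the paper quotes Lemma~\ref{lem:lim S_n=u}, which is exactly your Borel--Cantelli plus monotonicity step). Your Euler-product viewpoint, computing an averaged local factor $F_p(a,m)$ at each prime, is a repackaging of the paper's direct evaluation of the Dirichlet-type sum $G_{n,k}(a;m)$; the case analysis you describe matches the paper's congruence-system analysis in Propositions~\ref{prop:E(R(a,2^l))=} and~\ref{prop:E(R(a,p_1))=}, and the ``main obstacle'' you flag is precisely what Lemma~\ref{lem:the sum in l dimensions} handles.
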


Our results hold for any finite set $\mathcal{A}$, this means that at each step one may  use many different choices of walking strategies (finitely many). One may wonder whether we can relax the condition further to arbitrarily (infinitely) many choices of walking strategies at each step.

\textbf{Open Question:} Are these results  still true for an infinite set $\mathcal{A}$?

Now we explain a little bit about the difficulty of generalizing previous results to higher dimensions and our proof strategy. In the paper of Cilleruelo, Fern\'{a}ndez and Fern\'{a}ndez, they gave a key lemma (see \cite{CFF}, Lemma 2.1) needed to prove their theorems, which is a binomial theorem with a congruence condition. They used the local central limit theorem of the binomial distribution in probability theory to prove their lemma. However, this method is not ready to generalize to higher dimensions. In our paper, we use analytic method to handle higher dimensions and the  more complicated cases mentioned above (see Lemma \ref{lem:the sum in l dimensions}).

In the proof of Lemma \ref{lem:the sum in l dimensions}, we use the orthogonality of additive characters
$$
\frac{1}{d}\sum_{0\leq h\leq d-1}e\Big(\frac{hn}{d}\Big)=\left\{
\begin{aligned}
&1,\ \ \ \ {\rm{if}}\ d\mid n,\\
&0,\ \ \ \ \rm{otherwise}.
\end{aligned}
\right.
$$
to transform corresponding sums into an asymptotic formula (see formula \eqref{eq:L=}) with
$$
\sum_{1\leq h_{m_1},\cdots,h_{m_i}\leq d-1}\prod_{t=1}^q\Big|\alpha_{t,m_1}e\big(\frac{h_{m_1}}{d}\big)+\cdots+\alpha_{t,m_{i}}e\big(\frac{h_{m_i}}{d}\big)+\eta_{t,i}\Big|^{i_t}
$$
as an inner sum in the big-$O$ term, where $i_t$ is the number of type-${\boldsymbol{\alpha}}_t$ steps in the first $n$ steps in type-$\mathcal{A}$ random walk. The key point in our proof is that no matter how the random walker walks in the first $n$ steps, there always exists some $T\ (1\leq T\leq q)$ such that $i_{T}\geq \frac{n}{q}$ (This is also why we need $q$ to be finite). Hence the above formula becomes feasible to bound and then we obtain our result by applying certain number theoretic lemmas.

\bigskip

\textbf{Notations.} We use $\mathbb{Z}$ and $\mathbb{N}$ to denote the sets of integers and positive integers, respectively; use $\mathbb{P}(A)$ to denote the probability of an event $A$, use $\mathbb{E}(X)$ and $\mathbb{V}(X)$ to denote the expectation and variance of a random variable $X$; and use $[x]$ to denote the largest integer not exceeding the real number $x$. We also use the expressions $f=O(g)$ (or $f\ll g$) to mean $|f|\leq Cg$ for some constant $C>0$. When the constant $C$ depends on some parameters ${\bf \rho}$, we write $f=O_{\bf \rho}(g)$(or $f\ll_\rho g$).
\bigskip

\textbf{Acknowledgements.} The first listed author is partially supported by National Natural Science Foundation of China (NSFC, Grant No. 12071238) and Shandong Provincial
Natural Science Foundation (Grant No. ZR2019BA028).
The second and third listed authors are  supported by the National Natural Science Foundation of China (NSFC, Grant No. 12201346) and Shandong Provincial Foundation (Grant No. 2022HWYQ-046).    

\section{Preliminaries}
Throughout this paper, let $q\geq 1$ be an integer, we always denote
$s^{(a)}:=s_{1,a}+\cdots+s_{q,a}$
for integers $s_{1,a},\cdots,s_{q,a}$.

For integer $k\geq 2$ and nonnegative integers $n,\ u_1,\cdots,u_k$ with $u_1+\cdots+u_k=n$, we always write
$$
\binom{n}{u_1,\cdots,u_k}:=\frac{n!}{u_1!\cdots u_k!}.
$$
As a convention, the above formula is of value $1$ for $n=0$.

 Assume ${\boldsymbol{\alpha}}=(\alpha_1,\cdots,\alpha_k)$ with $0<\alpha_1,\cdots,\alpha_k<1$ and $\alpha_1+\cdots+\alpha_k=1$ and ${\bf u}=(u_1,\cdots,u_k)$, we denote
\begin{equation}\label{defn-Big-P}
P_{n,{\bf u},{\boldsymbol{\alpha}}}:=\binom{n}{u_1,\cdots,u_k}\alpha_1^{u_1}\cdots\alpha_k^{u_k}.
\end{equation}

We  use the following estimate concerning cosine function.

\begin{lemma}\label{lem:estimation of cos^lx}
For any integers $l\geq 1$ and $d>2$, we have
$$
\sum_{1\leq h<\frac{d}{2}}\cos^l\big(\frac{\pi h}{d}\big)=O\Big(\frac{d}{\sqrt{l}}\Big).
$$
\end{lemma}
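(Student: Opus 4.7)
The plan is to reduce the sum to a Gaussian integral by exploiting the quadratic decay of $\log\cos$ near zero. Since $d>2$, for every $1\leq h<d/2$ the angle $x:=\pi h/d$ lies in $(0,\pi/2)$, so $\cos(\pi h/d)$ is positive and we do not need to worry about absolute values.

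The first step is to establish the elementary inequality
\[
\cos(x)\leq e^{-x^{2}/2}\qquad\text{for all } x\in[0,\pi/2).
\]
This follows because $f(x):=\log\cos(x)+x^{2}/2$ has $f(0)=0$ and derivative $f'(x)=x-\tan(x)\leq 0$ on $[0,\pi/2)$, so $f$ is non-increasing there. Raising both sides to the power $l$ gives
\[
\cos^{l}\!\left(\tfrac{\pi h}{d}\right)\leq \exp\!\left(-\tfrac{\pi^{2}l h^{2}}{2d^{2}}\right)
\]
for every $h$ in the range of summation.

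The second step is to sum these Gaussian-type upper bounds. Extending the range to all $h\geq 1$ and comparing with an integral (by monotonicity of $h\mapsto \exp(-\pi^{2}l h^{2}/(2d^{2}))$ on $[0,\infty)$),
\[
\sum_{1\leq h<d/2}\cos^{l}\!\left(\tfrac{\pi h}{d}\right)
\leq \int_{0}^{\infty}\exp\!\left(-\tfrac{\pi^{2}l t^{2}}{2d^{2}}\right)dt
=\frac{d}{\pi\sqrt{l}}\int_{0}^{\infty}e^{-u^{2}/2}\,du\cdot\sqrt{2}
=O\!\left(\tfrac{d}{\sqrt{l}}\right),
\]
after the substitution $u=\pi\sqrt{l}\,t/d$. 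This gives the claimed bound in the regime where the integral dominates.

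There is really no serious obstacle here; the only small wrinkle is that for very large $l$ relative to $d$ the integral bound becomes smaller than the first term $\cos^{l}(\pi/d)$, but in that regime the entire sum is already exponentially small and certainly $O(d/\sqrt{l})$. The key analytic input is just the log-concavity inequality $\log\cos(x)\leq -x^{2}/2$ on $[0,\pi/2)$; once that is in place, the bound reduces to the standard evaluation of a Gaussian integral, yielding the stated $O(d/\sqrt{l})$ estimate uniformly in $d>2$ and $l\geq 1$.
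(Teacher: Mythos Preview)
Your proof is correct and takes a genuinely different route from the paper's. Both arguments start by comparing the sum to an integral (using that the summand is decreasing in $h$), but diverge from there. The paper evaluates $\int_0^{\pi/2}\cos^l(x)\,dx$ exactly via the Gamma-function identity and then invokes Stirling's formula to extract the $1/\sqrt{l}$ decay. You instead bound $\cos(x)$ pointwise by the Gaussian $e^{-x^2/2}$ on $[0,\pi/2)$ (via the elementary fact $\tan x\geq x$), which reduces everything to a standard Gaussian integral. Your approach is more elementary---it avoids special functions and Stirling entirely---and the inequality $\cos x\leq e^{-x^2/2}$ is a clean, reusable trick. One minor remark: your final paragraph's caveat is unnecessary, since for any decreasing nonnegative $g$ the bound $\sum_{h\geq 1}g(h)\leq\int_0^\infty g(t)\,dt$ holds unconditionally; no separate treatment of large $l$ is needed.
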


\begin{proof}
Since $\cos^l(\pi t/d)$ is decreasing for $t\in[0,d/2]$,  we have
$$
\sum_{1\leq h<\frac{d}{2}}\cos^l\big(\frac{\pi h}{d}\big)\ll\int_{0}^{\frac{d}{2}}\cos^l\big(\frac{\pi t}{d}\big)dt.
$$
Using change of variable $x=\pi t/d$ and the formula
$$
\int_{0}^{\frac{\pi}{2}}\cos^l(x)dx=\frac{\sqrt{\pi}\Gamma(\frac{l+1}{2})}{2\Gamma(\frac{l+2}{2})},
$$
where $\Gamma(s)$ is Euler gamma function, we obtain
\begin{align}\label{eq: sum_hcos^l<<}
\sum_{1\leq h<\frac{d}{2}}\cos^l\big(\frac{\pi h}{d}\big)\ll d\int_{0}^{\frac{\pi}{2}}\cos^l(x)dx\ll\frac{d\Gamma(\frac{l+1}{2})}{\Gamma(\frac{l+2}{2})}.
\end{align}
Applying Stirling's formula
$$
\Gamma(s)=\sqrt{2\pi}s^{s-1/2}e^{-s}\big(1+O_{\varepsilon}(|s|^{-1})\big),
$$
for any $0<\varepsilon<\pi$ and $-\pi+\varepsilon<\arg s<\pi-\varepsilon$, we have
$$
\frac{\Gamma(\frac{l+1}{2})}{\Gamma(\frac{l+2}{2})}\ll\frac{(\frac{l+1}{2})^{\frac{l}{2}}}{(\frac{l+2}{2})^{\frac{l+1}{2}}}\ll\frac{1}{\sqrt{l}}.
$$
This together with \eqref{eq: sum_hcos^l<<} gives our desired result.
\end{proof}

By Lemma \ref{lem:estimation of cos^lx}, we prove the following  result that is useful for our proof.

\begin{lemma}\label{lem:the sum in two dimensions}
Suppose $0<\delta\leq \frac{1}{2}$ is fixed and $n\in\mathbb{N}$. For any integer $ d>1$, we have
$$
\frac{1}{d}\sum_{1\leq h\leq d-1}\big(1-\delta+\delta\cos(\frac{2\pi h}{d})\big)^{\frac{n}{2}}=O_{\delta}\big(\frac{\log n}{\sqrt{n}}\big)
$$
as $n\rightarrow\infty$.
\end{lemma}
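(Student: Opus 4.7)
The plan is to reduce the sum to a Gaussian-type tail via two elementary inequalities. First, I would rewrite the summand using the trigonometric identity
\[
1 - \delta + \delta\cos\tfrac{2\pi h}{d} \;=\; 1 - 2\delta\sin^2\tfrac{\pi h}{d},
\]
which takes values in $[0,1]$ since $\delta \leq 1/2$, and then invoke the pointwise bound $(1-y)^{n/2} \leq e^{-ny/2}$, valid for $y \in [0,1]$, to obtain
\[
\Bigl(1 - \delta + \delta\cos\tfrac{2\pi h}{d}\Bigr)^{n/2} \;\leq\; e^{-\delta n \sin^2(\pi h/d)}.
\]

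Next, I would exploit the symmetry $h \leftrightarrow d-h$ of $\sin^2(\pi h/d)$ to fold the sum onto $1 \leq h \leq \lfloor d/2 \rfloor$, and then apply Jordan's inequality $\sin(\pi h/d) \geq 2h/d$ on this range to produce
\[
\frac{1}{d}\sum_{h=1}^{d-1}\Bigl(1 - \delta + \delta\cos\tfrac{2\pi h}{d}\Bigr)^{n/2} \;\leq\; \frac{2}{d}\sum_{h=1}^{\lfloor d/2\rfloor} e^{-4\delta n h^2/d^2}.
\]
Since $x \mapsto e^{-4\delta n x^2/d^2}$ is decreasing on $[0,\infty)$, the right-hand sum is dominated by
\[
\int_0^{\infty} e^{-4\delta n x^2/d^2}\,dx \;=\; \frac{d\sqrt{\pi}}{4\sqrt{\delta n}},
\]
giving overall $O_\delta(1/\sqrt{n})$, in fact slightly stronger than the stated $O_\delta(\log n/\sqrt{n})$.

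The only real obstacle is keeping the estimate uniform in $d$, but the Gaussian integral comparison handles all $d \geq 2$ at once, with the trivial case $d=2$ yielding a single term $(1-2\delta)^{n/2}$ which decays exponentially. An alternative in the spirit of Lemma \ref{lem:estimation of cos^lx} is to split the sum by whether $\sin^2(\pi h/d) \gtrless 1/2$: the large range gives exponential decay via $(1-\delta)^{n/2}$, while in the small range one establishes the pointwise comparison $(1-2\delta\sin^2\theta)^{n/2} \leq \cos^{\lfloor \delta n\rfloor}(\pi h/d)$, equivalent to $\log(1-2\delta x) \leq \delta\log(1-x)$ for $x = \sin^2\theta \in [0,1/2]$. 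The latter is verifiable by setting $f(x) = \log(1-2\delta x) - \delta\log(1-x)$ and checking $f(0)=0$ and $f'(x) = \delta(2x(1-\delta)-1)/((1-x)(1-2\delta x)) \leq 0$ on $[0,1/2]$ when $\delta \leq 1/2$. With this pointwise bound in hand, Lemma \ref{lem:estimation of cos^lx} with $l = \lfloor \delta n \rfloor$ closes the argument and recovers the same $O_\delta(1/\sqrt{n})$ bound.
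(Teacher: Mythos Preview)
Your main argument is correct and takes a genuinely different, more elementary route than the paper. The paper expands $(1-\delta+\delta\cos(2\pi h/d))^m$ by the binomial theorem when $n=2m$ is even, producing a sum over $l$ of binomial weights times $\sum_h\cos^l(2\pi h/d)$; it then reduces the inner sum to $\sum_{h<d/2}\cos^l(\pi h/d)$, splits the outer sum at $l\approx m/\log^2 m$, bounds the small-$l$ block trivially and the large-$l$ block via Lemma~\ref{lem:estimation of cos^lx}, and finally reduces odd $n$ to even $n$. That threshold is precisely the source of the extra $\log n$ factor in the paper's bound. Your approach---double-angle identity, $1-y\le e^{-y}$, the symmetry fold, Jordan's inequality, and the Gaussian integral---avoids the binomial expansion and the parity split entirely, is uniform in $d$ (the case $d=2$ is absorbed automatically), and actually delivers the sharper $O_\delta(1/\sqrt{n})$. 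Your alternative argument is closer in spirit to the paper since it also terminates with Lemma~\ref{lem:estimation of cos^lx}, but it replaces the paper's binomial split in $l$ by the cleaner pointwise inequality $\log(1-2\delta x)\le\delta\log(1-x)$ on $[0,\tfrac12]$, which yields $(1-2\delta\sin^2\theta)^{n/2}\le\cos^{\lfloor\delta n\rfloor}\theta$ after folding to the range where $\cos\theta\ge 0$; this likewise gives $O_\delta(1/\sqrt{n})$ once $n\ge 1/\delta$.
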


\begin{proof}
In our proof, the implied constants in the big-$O$ and $\ll$  depend at most on $\delta$. It can be verified by direct calculation that the result is true for  $d=2$, so we only consider $d>2$. Denote
$$
H_n=H_n(d,\delta):=\frac{1}{d}\sum_{1\leq h\leq d-1}\big(1-\delta+\delta\cos(\frac{2\pi h}{d})\big)^{\frac{n}{2}}.
$$

If $n=2m$ is even, applying the binomial theorem and exchangeing the order of summations, we obtain
$$
H_n=\frac{1}{d}\sum_{0\leq l\leq m}\binom{m}{l}(1-\delta)^{m-l}\delta^l \sum_{1\leq h\leq d-1}\cos^l\big(\frac{2\pi h}{d}\big).
$$
By an elementary argument, we derive
$$
\sum_{1\leq h\leq d-1}\cos^l\big(\frac{2\pi h}{d}\big)=2\sum_{1\leq h<d/2}\cos^l\big(\frac{2\pi h}{d}\big)+\epsilon_{d}(-1)^l,
$$
where
$$
\epsilon_{d}=\begin{cases}
1, \quad{\rm if}\ d\ {\rm\ is\ even},\\
0, \quad{\rm if}\  d\ {\rm\ is\ odd}.
\end{cases}
$$
For $\sum\limits_{1\leq h<d/2}\cos^l\big(2\pi h/d\big)$, we use change of variable $h^{\prime}=2h$ and obtain
$$
\sum_{1\leq h<d/2}\cos^l\big(\frac{2\pi h}{d}\big)=\sum_{\substack{1\leq h^{\prime}<d\\h^{\prime}\ {\rm{even}}}}\cos^l\big(\frac{\pi h^{\prime}}{d}\big).
$$
Dividing the above sum into two parts, we derive
$$
\sum_{1\leq h<d/2}\cos^l\big(\frac{2\pi h}{d}\big)=\sum_{\substack{1\leq h^{\prime}<d/2\\h^{\prime}\ {\rm{even}}}}\cos^l\big(\frac{\pi h^{\prime}}{d}\big)+\sum_{\substack{d/2< h^{\prime}<d\\h^{\prime}\ {\rm{even}}}}\cos^l\big(\frac{\pi h^{\prime}}{d}\big).
$$
Letting $t=d-h^{\prime}$ for the second term on the right hand side, we obtain
$$
\sum_{1\leq h<d/2}\cos^l\big(\frac{2\pi h}{d}\big)=\sum_{\substack{1\leq h^{\prime}<d/2\\h^{\prime}\ {\rm{even}}}}\cos^l\big(\frac{\pi h^{\prime}}{d}\big)+(-1)^l\sum_{\substack{1\leq t<d/2\\t\ {\rm{even}}}}\cos^l\big(\frac{\pi t}{d}\big)
$$
for even $d$, and
$$
\sum_{1\leq h<d/2}\cos^l\big(\frac{2\pi h}{d}\big)=\sum_{\substack{1\leq h^{\prime}<d/2\\h^{\prime}\ {\rm{even}}}}\cos^l\big(\frac{\pi h^{\prime}}{d}\big)+(-1)^l\sum_{\substack{1\leq t<d/2\\t\ {\rm{odd}}}}\cos^l\big(\frac{\pi t}{d}\big)
$$
for odd $d$. Therefore
$$
\sum_{1\leq h<d/2}\cos^l\big(\frac{2\pi h}{d}\big)\ll\sum_{1\leq h<d/2}\cos^l\big(\frac{\pi h}{d}\big)
$$
for all $d>2$. Since the contribution of $\epsilon_{d}(-1)^l$ to $H_n$ is $\ll (1-2\delta)^m\ll \frac{1}{\sqrt{m}}$, then we have
$$
H_n\ll\frac{1}{d}\sum_{0\leq l\leq m}\binom{m}{l}(1-\delta)^{m-l}\delta^l I(l,d)+\frac{1}{\sqrt{m}},
$$
where
$$
I(l,d)=\sum_{1\leq h<d/2}\cos^l\big(\frac{\pi h}{d}\big).
$$
Dividing the sum over $l$ into two parts according to $l\leq m/\log^2m$ or not, we have
\begin{align}\label{eq:H_n<<H_n,1+H_n,2}
H_n\ll H_{n}^{\prime}+H_{n}^{\prime\prime}+\frac{1}{\sqrt{m}},
\end{align}
where
$$
H_{n}^{\prime}:=\frac{1}{d}\sum_{0\leq l\leq m/\log^2m}\binom{m}{l}(1-\delta)^{m-l}\delta^lI(l,d)
$$
and
$$
H_{n}^{\prime\prime}:=\frac{1}{d}\sum_{m/\log^2m< l\leq m}\binom{m}{l}(1-\delta)^{m-l}\delta^lI(l,d).
$$

For $H_{n}^{\prime}$, we estimate the sum over $h$ trivially and obtain
\begin{align*}
H_{n}^{\prime}\ll\sum_{0\leq l\leq m/\log^2m}\binom{m}{l}(1-\delta)^{m-l}\delta^l\ll(1-\delta)^m\sum_{0\leq l\leq m/\log^2m}\binom{m}{l},
\end{align*}
where we used the fact $\delta\leq \frac{1}{2}$. Observe that
$$
\binom{m}{l}=\frac{m!}{l!\ (m-l)!}\leq \frac{m^l}{l!},
$$
then we have
$$
H_{n}^{\prime}\ll(1-\delta)^mm^{m/\log^2m}\sum_{0\leq l\leq m/\log^2m}\frac{1}{l!}\ll(1-\delta)^mm^{m/\log^2m}.
$$
It follows that
\begin{align*}
(1-\delta)^mm^{m/\log^2m}=\big((1-\delta)e^{1/\log m}\big)^m\leq(1-\delta)^m\ll\frac{1}{\sqrt{m}}
\end{align*}
for $m$ being sufficiently large, which implies
\begin{align}\label{eq:H_n,1<<}
H_{n}^{\prime}\ll\frac{1}{\sqrt{n}}.
\end{align}

For $H_{n}^{\prime\prime}$, we apply Lemma \ref{lem:estimation of cos^lx} and obtain
$$
H_{n}^{\prime\prime}\ll\sum_{m/\log^2m< l\leq m}\binom{m}{l}\frac{(1-\delta)^{m-l}\delta^l}{\sqrt{l}}\ll\frac{\log m}{\sqrt m}\sum_{m/\log^2m< l\leq m}\binom{m}{l}(1-\delta)^{m-l}\delta^l\le \frac{\log m}{\sqrt m},
$$
which implies
\begin{align}\label{eq:H_n,2<<}
H_{n}^{\prime\prime}\ll\frac{\log n}{\sqrt{n}}.
\end{align}
Combining \eqref{eq:H_n,1<<}, \eqref{eq:H_n,2<<} and \eqref{eq:H_n<<H_n,1+H_n,2}, we derive
\begin{align}\label{eq:H_n<< for n is even}
H_n\ll\frac{\log n}{\sqrt{n}}
\end{align}
for even $n$.

If $n$ is odd, we observe that
$$
|1-\delta+\delta\cos(2\pi h/d)|\leq 1,
$$
then
$$
H_n\ll\frac{1}{d}\sum_{1\leq h\leq d-1}\big(1-\delta+\delta\cos\frac{2\pi h}{d}\big)^{\frac{n-1}{2}}=H_{n-1}.
$$
Since $n-1$ is even, we obtain
\begin{align}\label{eq:H_n<< for n is odd}
H_n\ll H_{n-1} \ll\frac{\log (n-1)}{\sqrt{n-1}}\ll\frac{\log n}{\sqrt{n}}
\end{align}
for odd $n$.

Now our desired result follows by combining \eqref{eq:H_n<< for n is even} and \eqref{eq:H_n<< for n is odd}.
\end{proof}
\begin{lemma}\label{lem:estimate of beta_{t_i}}
Let $n\geq 1,\ l\geq 2$ and $1\leq i\leq l-1$ be integers, and let vector $\overline{{\boldsymbol{\beta}}}=(\beta_{t_1},\cdots,\beta_{t_i},\eta_i)$ with $0<\beta_{t_1},\cdots,\beta_{t_i},\eta_i<1$ and $\eta_i=1-(\beta_{t_1}+\cdots+\beta_{t_i})$. Then for any integer $d>1$, we have
$$
\frac{1}{d^{l-1}}\sum_{1\leq h_{t_1},\cdots,h_{t_i}\leq d-1}\Big|\beta_{t_1}e\big(\frac{h_{t_1}}{d}\big)+\cdots+\beta_{t_{i}}e\big(\frac{h_{t_i}}{d}\big)+\eta_i\Big|^n=O_{\overline{{\boldsymbol{\beta}}}}\big(\frac{\log n}{\sqrt{n}}\big)
$$
as $n\rightarrow\infty$, where $e(x):=e^{2\pi ix}$ for any real number $x$.
\end{lemma}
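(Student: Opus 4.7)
The plan is to bound the inner modulus pointwise by a shifted cosine of the form appearing in Lemma~\ref{lem:the sum in two dimensions}, invoke that lemma on the distinguished variable $h_{t_1}$, and estimate the remaining $i-1$ summations trivially.

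Let $T := \big|\beta_{t_1}e(h_{t_1}/d) + \cdots + \beta_{t_i}e(h_{t_i}/d) + \eta_i\big|$. Treating $\eta_i$ as the weight attached to the fixed point $e(0/d) = 1$, the tuple $(\eta_i, \beta_{t_1}, \ldots, \beta_{t_i})$ is a probability distribution on $i+1$ unit-modulus complex numbers. Expanding $T^2 = S\overline{S}$, using $\sum_j \beta_j = 1$, and applying $1 - \cos\theta = 2\sin^2(\theta/2)$ gives
\[
T^2 = 1 - 4\sum_{0 \leq j < k \leq i}\beta_j\beta_k\sin^2\!\big(\pi(h_j - h_k)/d\big),
\]
where I set $\beta_0 := \eta_i$, $h_0 := 0$, and $\beta_j := \beta_{t_j}$, $h_j := h_{t_j}$ for $j \geq 1$. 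Discarding all non-negative pair contributions except $(j,k) = (0,1)$ yields the key majorization
\[
T^2 \leq 1 - 2\eta_i\beta_{t_1}\big(1 - \cos(2\pi h_{t_1}/d)\big) = 1 - \delta + \delta\cos(2\pi h_{t_1}/d), \quad \delta := 2\eta_i\beta_{t_1}.
\]
Since $\eta_i + \beta_{t_1} \leq 1$, AM--GM gives $\eta_i\beta_{t_1} \leq 1/4$, so $\delta \leq 1/2$, meeting the hypothesis of Lemma~\ref{lem:the sum in two dimensions}.

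Raising to the $n/2$ power yields $T^n \leq (1 - \delta + \delta\cos(2\pi h_{t_1}/d))^{n/2}$, whose right-hand side is independent of $h_{t_2}, \ldots, h_{t_i}$. The trivial summations over these variables thus contribute at most $(d-1)^{i-1} \leq d^{i-1}$, while Lemma~\ref{lem:the sum in two dimensions} controls the $h_{t_1}$ sum. Combining,
\[
\frac{1}{d^{l-1}}\sum_{1 \leq h_{t_1},\ldots,h_{t_i} \leq d-1} T^n \leq \frac{d^{i-1}}{d^{l-1}}\cdot d \cdot O_\delta\!\Big(\tfrac{\log n}{\sqrt n}\Big) = d^{\,i - l + 1}\cdot O_{\overline{{\boldsymbol{\beta}}}}\!\Big(\tfrac{\log n}{\sqrt n}\Big),
\]
and the prefactor is at most $1$ precisely because $i \leq l-1$, yielding the claimed bound. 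The only substantive step is producing the pointwise majorization of $T^2$; the role of the hypothesis $l-1 \geq i$ is solely to absorb the factor $d^{i-1}$ accumulated from the trivial summations. I do not anticipate a significant obstacle beyond verifying that the single extracted pair contribution $(j,k)=(0,1)$ is strong enough, which it is because both $\eta_i > 0$ and $\beta_{t_1} > 0$ are fixed constants independent of $n$ and $d$.
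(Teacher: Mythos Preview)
Your proof is correct and follows essentially the same route as the paper: expand $T^2$, discard all but one cross term involving $\eta_i$ to majorize by $1-\delta+\delta\cos(2\pi h/d)$, sum the remaining variables trivially, and invoke Lemma~\ref{lem:the sum in two dimensions}. Your $\sin^2$ formulation is slightly cleaner than the paper's back-and-forth through $\cos^2$, and you explicitly verify $\delta\le 1/2$ via AM--GM (which the paper omits), but the argument is the same in substance.
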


\begin{proof}
In our proof, the implied constants in the big-$O$ and $\ll$  depend at most on $\overline{{\boldsymbol{\beta}}}$.
Denote
$$
J_n=J_n(d,l,\overline{{\boldsymbol{\beta}}}):=\frac{1}{d^{l-1}}\sum_{1\leq h_{t_1},\cdots,h_{t_i}\leq d-1}\Big|\beta_{t_1}e\big(\frac{h_{t_1}}{d}\big)+\cdots+\beta_{t_{i}}e\big(\frac{h_{t_i}}{d}\big)+\eta_i\Big|^n.
$$
Expanding the $n$-th power inside the above sum and using the formula $\cos(2\theta)=2\cos^2(\theta)-1$ for real number $\theta$, we obtain
$$
J_n\ll\frac{1}{d^{l-1}}\sum_{1\leq h_{t_1},\cdots,h_{t_i}\leq d-1}\Big(\sum_{1\leq a\leq i}\beta^2_{t_a}+\eta^2_i-2\sum_{1\leq a<b\leq i}\beta_{t_a}\beta_{t_b}-2\ \eta_i\sum_{1\leq b\leq i}\beta_{t_b}+J_n^{\prime}\Big)^{\frac{n}{2}},
$$
where
$$
J_n^{\prime}:=4\sum_{1\leq a<b\leq i}\beta_{t_a}\beta_{t_b}\cos^2\big(\frac{\pi(h_{t_a}-h_{t_b})}{d}\big)+4\eta_i\sum_{1\leq b\leq i}\beta_{t_b}\cos^2\big(\frac{\pi h_{t_b}}{d}\big).
$$
Note that
$$
J_n^{\prime}\ll 4\sum_{1\leq a<b\leq i}\beta_{t_a}\beta_{t_b}+4\eta_i\sum_{1\leq b\leq i-1}\beta_{t_b}+4\eta_i\beta_{t_i}\cos^2\big(\frac{\pi h_{t_i}}{d}\big),
$$
we have
$$
J_n\ll\frac{1}{d^{l-1}}\sum_{1\leq h_{t_1},\cdots,h_{t_i}\leq d-1}\Big(\sum_{1\leq a\leq i}\beta^2_{t_a}+\eta^2_i+2\sum_{1\leq a<b\leq i}\beta_{t_a}\beta_{t_b}+2\eta_i\sum_{1\leq b\leq i-1}\beta_{t_b}+2\eta_i\beta_{t_i}\cos\big(\frac{2\pi h_{t_i}}{d}\big)\Big)^{\frac{n}{2}},
$$
here we used $\cos(2\theta)=2\cos^2(\theta)-1$ again. It follows that
$$
J_n\ll\frac{1}{d}\sum_{1\leq h_{t_i}\leq d-1}
\Big(1-\delta+\delta\cos\big(\frac{2\pi h_{t_i}}{d}\big)\Big)^{\frac{n}{2}},
$$
where $\delta=2\eta_i\beta_{t_i}$.
Then our desired result follows from Lemma \ref{lem:the sum in two dimensions}.
\end{proof}

With the help of Lemma \ref{lem:estimate of beta_{t_i}}, we derive the following lemma, which is important to prove our theorem.

\begin{lemma}\label{lem:the sum in l dimensions}
Let $n$, $q\geq 1$, $k\geq 2$ and $1\leq t\leq q$ be integers and $g_1,\cdots,g_{k-1}\in\mathbb{Z}$. Suppose  ${\bf s}_t=(s_{t,1},\cdots,s_{t,k})\in\mathbb{Z}^k$, and ${\boldsymbol{\alpha}}_t=(\alpha_{t,1},\cdots,\alpha_{t,k})$ with $0<\alpha_{t,1},\cdots,\alpha_{t,k}<1$ and $\alpha_{t,1}+\cdots+\alpha_{t,k}=1$. For any integers $d\geq 1$ and $0\leq i_1,\cdots,i_q\leq n$ with $i_1+\cdots+i_q=n$, we have
$$
\sum_{\substack{0\leq s_{t,1},\cdots,s_{t,k}\leq i_t\\s_{t,1}+\cdots+s_{t,k}=i_t\\ \forall 1\leq t\leq q\\ s^{(a)}\equiv g_{a}(\bmod d),\\a=1,\cdots,k-1}}\prod_{t=1}^qP_{i_t,{\bf s}_t,{\boldsymbol{\alpha}}_t}=\frac{1}{d^{k-1}}+O\big(\frac{\log n}{\sqrt{n}}\big)
$$
 as $n\rightarrow\infty$, where $P_{i, \boldsymbol{s}, \boldsymbol{\alpha}}$ is defined in \eqref{defn-Big-P}, and $s^{(a)}:=\sum\limits_{1\leq v\leq q}s_{v,a}$,  and the implied constant in the big-$O$ depends on $k,\ q$, and $({\boldsymbol{\alpha}}_t)_{1\leq t\leq q}$.
\end{lemma}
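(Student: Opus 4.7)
The plan is to detect the $k-1$ congruence conditions via orthogonality of additive characters, simplify the resulting inner sum by the multinomial theorem, extract the main term from the trivial frequency, and bound the remaining oscillatory contributions using Lemma~\ref{lem:estimate of beta_{t_i}} together with a pigeonhole argument on $(i_1,\ldots,i_q)$. I would first dispose of $d=1$: all congruences hold automatically, and $\sum\prod_t P_{i_t,{\bf s}_t,\boldsymbol{\alpha}_t} = \prod_t(\alpha_{t,1}+\cdots+\alpha_{t,k})^{i_t} = 1$ by the multinomial theorem, matching $1/d^{k-1}=1$. Assume $d\geq 2$ henceforth.

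For $d\geq 2$, I would replace each indicator of $s^{(a)}\equiv g_a\,(\bmod\,d)$ by $\frac{1}{d}\sum_{h_a=0}^{d-1}e\big(h_a(s^{(a)}-g_a)/d\big)$. Using $s^{(a)}=\sum_t s_{t,a}$, the phase $\prod_a e(h_a s^{(a)}/d)$ factors across $t$, and after interchanging summations the sum over each ${\bf s}_t$ (constrained by $s_{t,1}+\cdots+s_{t,k}=i_t$) becomes a multinomial sum which equals $\big(\sum_{a=1}^{k-1}\alpha_{t,a}e(h_a/d)+\alpha_{t,k}\big)^{i_t}$ by the multinomial theorem applied to the vector $\big(\alpha_{t,1}e(h_1/d),\ldots,\alpha_{t,k-1}e(h_{k-1}/d),\alpha_{t,k}\big)$. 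The original sum thus rewrites as
$$
\frac{1}{d^{k-1}}\sum_{h_1,\ldots,h_{k-1}=0}^{d-1} e\Big(-\sum_{a=1}^{k-1}\frac{h_a g_a}{d}\Big)\prod_{t=1}^q\Big(\sum_{a=1}^{k-1}\alpha_{t,a}e(h_a/d)+\alpha_{t,k}\Big)^{i_t}.
$$
The contribution from $h_1=\cdots=h_{k-1}=0$ collapses to $\prod_t 1^{i_t}/d^{k-1}=1/d^{k-1}$, which is the main term.

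For $(h_1,\ldots,h_{k-1})\neq(0,\ldots,0)$, the crucial observation (already flagged in the introduction) is that since $i_1+\cdots+i_q=n$ and $q$ is finite, pigeonhole supplies some $T=T(i_1,\ldots,i_q)\in\{1,\ldots,q\}$ with $i_T\geq n/q$. Every factor $|\sum_a\alpha_{t,a}e(h_a/d)+\alpha_{t,k}|$ is at most $1$ by the triangle inequality, so I may drop all factors except the $T$-th, bounding the error by
$$
\frac{1}{d^{k-1}}\sum_{(h_1,\ldots,h_{k-1})\neq(0,\ldots,0)}\Big|\sum_{a=1}^{k-1}\alpha_{T,a}e(h_a/d)+\alpha_{T,k}\Big|^{i_T}.
$$
I would then partition this sum according to the support $I=\{a:h_a\neq 0\}\subseteq\{1,\ldots,k-1\}$. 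For each nonempty $I$ of size $i$, the corresponding piece equals
$$
\frac{1}{d^{k-1}}\sum_{\substack{1\leq h_a\leq d-1\\ a\in I}}\Big|\sum_{a\in I}\alpha_{T,a}e(h_a/d)+\eta_i\Big|^{i_T},
$$
with $\eta_i=\alpha_{T,k}+\sum_{a\notin I}\alpha_{T,a}>0$. This is precisely the setting of Lemma~\ref{lem:estimate of beta_{t_i}} with $l=k$, giving a bound $O\big(\log i_T/\sqrt{i_T}\big)=O(\log n/\sqrt{n})$ since $i_T\geq n/q$ and $\log x/\sqrt{x}$ is monotonically decreasing for large $x$. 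Summing over the $2^{k-1}-1$ nonempty subsets $I$ and taking the maximum of the implied constants over the finitely many pairs $(T,I)$ yields the stated error estimate with an implied constant depending on $k$, $q$, and $(\boldsymbol{\alpha}_t)_{1\leq t\leq q}$.

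The main obstacle is exactly this pigeonhole step. Without it, the factorization $\prod_t(\cdots)^{i_t}$ would be intractable for arbitrary distributions of $(i_1,\ldots,i_q)$, since some exponent $i_t$ might be too small for Lemma~\ref{lem:estimate of beta_{t_i}} to produce savings. The finiteness of $q$ is what forces at least one exponent to be of order $n$, and this same step is what blocks a direct extension to infinite $\mathcal{A}$ (the authors' Open Question).
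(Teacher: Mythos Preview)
Your proposal is correct and follows essentially the same approach as the paper: orthogonality of additive characters to detect the congruences, the multinomial theorem to collapse each inner sum to $\big(\sum_a\alpha_{t,a}e(h_a/d)+\alpha_{t,k}\big)^{i_t}$, extraction of the main term from the zero frequency, pigeonhole to find $i_T\geq n/q$, and then Lemma~\ref{lem:estimate of beta_{t_i}} on each support pattern of $(h_1,\ldots,h_{k-1})$. The only cosmetic differences are that you separate out the trivial case $d=1$ explicitly and phrase the decomposition of the error as a sum over nonempty subsets $I\subseteq\{1,\ldots,k-1\}$, whereas the paper writes it as a double sum over the size $i$ and ordered tuples $m_1<\cdots<m_i$; these are equivalent.
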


\begin{proof}
For simplicity, denote
$$
L_{n,d,k}=L_{n,d,k}(g_a,i_t,\boldsymbol{\alpha}_t)_{1\leq a\leq k-1,1\leq t\leq q}:=\sum_{\substack{0\leq s_{t,1},\cdots,s_{t,k}\leq i_t\\s_{t,1}+\cdots+s_{t,k}=i_t\\ \forall 1\leq t\leq q\\ s^{(a)}\equiv g_a(\bmod d),\\a=1,\cdots,k-1}}\prod_{t=1}^qP_{i_t,{\bf s}_t,{\boldsymbol{\alpha}}_t}.
$$
Using the orthogonality of additive characters
$$
\frac{1}{d}\sum_{0\leq h\leq d-1}e\Big(\frac{hn}{d}\Big)=\left\{
\begin{aligned}
&1,\ \ \ \ {\rm{if}}\ d\mid n,\\
&0,\ \ \ \ \rm{otherwise},
\end{aligned}
\right.
$$
where $n\in \mathbb{Z}$ and $e(x):=e^{2\pi ix}$ for any $x\in \mathbb{R}$, we obtain
\begin{align*}
L_{n,d,k}=\frac{1}{d^{k-1}}\sum_{\substack{0\leq s_{t,1},\cdots,s_{t,k}\leq i_t\\s_{t,1}+\cdots+s_{t,k}=i_t\\ \forall 1\leq t\leq q}}\prod_{t=1}^qP_{i_t,{\bf s}_t,{\boldsymbol{\alpha}}_t}\prod_{a=1}^{k-1}\sum_{0\leq h_a\leq d-1}e\Big(\frac{h_a(s^{(a)}-g_a)}{d}\Big).
\end{align*}
Changing the order of the summations and applying the binomial theorem, we derive
$$
L_{n,d,k}=\frac{1}{d^{k-1}}\sum_{0\leq h_1,\cdots, h_{k-1}\leq d-1}\prod_{a=1}^{k-1}e(-\frac{h_ag_a}{d})\prod_{t=1}^q\Big(\alpha_{t,1}e\big(\frac{h_1}{d}\big)+\cdots+\alpha_{t,k-1}e\big(\frac{h_{k-1}}{d}\big)+\alpha_{t,k}\Big)^{i_t}.
$$
Taking out the term $h_1=h_2=\cdots =h_{k-1}=0$, we get
\begin{align}\label{eq:L=}
L_{n,d,k}=\frac{1}{d^{k-1}}+O\big(L_{n,d,k}^{\prime}\big),
\end{align}
where
$$
L_{n,d,k}^{\prime}:=\frac{1}{d^{k-1}}\sum_{1\leq i\leq k-1}\sum_{\substack{1\leq m_1,\cdots,m_i\leq k-1\\m_1<\cdots<m_i}}\sum_{1\leq h_{m_1},\cdots,h_{m_i}\leq d-1}\prod_{t=1}^q\Big|\alpha_{t,m_1}e\big(\frac{h_{m_1}}{d}\big)+\cdots+\alpha_{t,m_{i}}e\big(\frac{h_{m_i}}{d}\big)+\eta_{t,i}\Big|^{i_t}
$$
with $\eta_{t,i}=1-(\alpha_{t,m_1}+\cdots+\alpha_{t,m_{i}})$.

Since $i_1+\cdots+i_q=n$,  there exists $i_T\geq \frac{n}{q}$ for some $1\leq T\leq q$. Thus
$$
L_{n,d,k}^{\prime}\ll\frac{1}{d^{k-1}}\sum_{1\leq i\leq k-1}\sum_{\substack{1\leq m_1,\cdots,m_i\leq k-1\\m_1<\cdots<m_i}}\sum_{1\leq h_{m_1},\cdots,h_{m_i}\leq d-1}\Big|\alpha_{T,m_1}e\big(\frac{h_{m_1}}{d}\big)+\cdots+\alpha_{T,m_{i}}e\big(\frac{h_{m_i}}{d}\big)+\eta_{T,i}\Big|^{i_T},
$$
here we used the bound
$$
\Big|\alpha_{t,m_1}e\big(\frac{h_{m_1}}{d}\big)+\cdots+\alpha_{t,m_{i}}e\big(\frac{h_{m_i}}{d}\big)+\eta_{t,i}\Big|\leq \alpha_{t,m_1}+\cdots + \alpha_{t,m_{i}}+\eta_{t,i}=1
$$
for any $1\leq t\leq q$. Hence we have
$$
L_{n,d,k}^{\prime}\ll\frac{\log i_T}{\sqrt{i_T}}\sum_{1\leq i\leq k-1}\sum_{\substack{1\leq m_1,\cdots,m_i\leq k-1\\m_1<\cdots<m_i}}1\ll \frac{\log n}{\sqrt{n}},
$$
thanks to Lemma \ref{lem:estimate of beta_{t_i}}, which together with \eqref{eq:L=} implies our required result .
\end{proof}

The next lemma is the criteria of visibility of lattice points in $\mathbb{Z}^k$.

\begin{lemma}{\rm{(\cite{R-thesis},\ p.\ 2)}}\label{lem:the critia of visibility}
For any integer $k\geq 2$ and ${\bf n}=(n_1,\cdots,n_k)\in\mathbb{Z}^k$, the lattice point ${\bf n}$ is visible  if and only if $\gcd(n_1,\cdots,n_k)=1$.
\end{lemma}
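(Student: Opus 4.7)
The plan is to prove both directions by contrapositive, reducing visibility to a statement about rational scalings of the vector ${\bf n}$.

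First, I would handle the easier direction: if $d:=\gcd(n_1,\ldots,n_k)>1$, then the point ${\bf m}:=(n_1/d,\ldots,n_k/d)$ is a lattice point. It lies on the open segment from ${\bf 0}$ to ${\bf n}$ because it corresponds to the parameter value $1/d\in(0,1)$, and it is distinct from both endpoints since $d>1$ (so ${\bf m}\neq{\bf 0}$ unless ${\bf n}={\bf 0}$, which is ruled out by $\gcd\geq 1$, and ${\bf m}\neq{\bf n}$). Hence ${\bf n}$ fails to be visible.

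For the converse, suppose ${\bf n}$ is not visible, so some lattice point ${\bf m}=(m_1,\ldots,m_k)$ different from ${\bf 0}$ and ${\bf n}$ lies on the segment. Then ${\bf m}=t{\bf n}$ for some $t\in(0,1)$. Picking any coordinate with $n_i\neq 0$ shows $t=m_i/n_i\in\mathbb{Q}$, and if $n_i=0$ then $m_i=0$ automatically. Write $t=p/q$ in lowest terms with $0<p<q$ and $\gcd(p,q)=1$. Then $q\,m_i=p\,n_i$ for every $i$, and since $\gcd(p,q)=1$, Euclid's lemma forces $q\mid n_i$ for every $i$. Therefore $q\mid\gcd(n_1,\ldots,n_k)$, and in particular $\gcd(n_1,\ldots,n_k)\geq q>1$.

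There is no real obstacle here; the only mild subtlety is making sure $t$ is recognized as rational and then invoking the coprimality of numerator and denominator to divide through. Since the lemma is an elementary criterion for visibility that is cited from \cite{R-thesis}, the above argument suffices and no deeper machinery is required.
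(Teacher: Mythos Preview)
Your argument is correct and is the standard elementary proof of this visibility criterion. The paper itself does not give a proof of this lemma at all; it simply cites \cite{R-thesis}, p.~2, so there is no ``paper's own proof'' to compare against, and your self-contained argument is a welcome addition.
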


We need the following key lemma from probability, which is the second moment method.

\begin{lemma}{\rm{(\cite{CFF},\ Lemma\ 2.5)}}\label{lem:lim S_n=u}
 Let $(X_i)_{i\geq1}$ be a sequence of uniformly bounded random variables such that
$$
\lim_{n\rightarrow\infty}\mathbb{E}(\overline{S}_n)=\mu
$$
where
$$
\overline{S}_n=\frac{1}{n}\sum_{1\leq i\leq n}X_i.
$$
If there exists a constant $\delta>0$ such that the variance $\mathbb{V}(\overline{S}_n)=O(n^{-\delta})$ for $n\geq1$, then we have
$$
\lim_{n\rightarrow\infty}\overline{S}_n=\mu
$$
almost surely.
\end{lemma}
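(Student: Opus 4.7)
The plan is to combine Chebyshev's inequality with the Borel--Cantelli lemma along a polynomially spaced subsequence, and then use the uniform boundedness of the $X_i$ to interpolate to the full sequence. The point is that the hypothesis $\mathbb{V}(\overline{S}_n)=O(n^{-\delta})$ only gives a summable bound along $n$ when $\delta>1$; for general $\delta>0$ a direct application of Borel--Cantelli fails and we must thin the sequence.

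First, for any $\varepsilon>0$ Chebyshev's inequality gives
$$
\mathbb{P}\bigl(|\overline{S}_n-\mathbb{E}(\overline{S}_n)|\geq \varepsilon\bigr)\leq \frac{\mathbb{V}(\overline{S}_n)}{\varepsilon^2}=O_\varepsilon(n^{-\delta}).
$$
I would fix an integer $m>1/\delta$ and work along the subsequence $n_j:=j^m$, for which $\sum_{j\geq 1}n_j^{-\delta}=\sum_{j\geq 1}j^{-m\delta}<\infty$. Summing Chebyshev and applying Borel--Cantelli, $|\overline{S}_{n_j}-\mathbb{E}(\overline{S}_{n_j})|\to 0$ almost surely. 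Since $\mathbb{E}(\overline{S}_n)\to\mu$ implies $\mathbb{E}(\overline{S}_{n_j})\to\mu$, this gives $\overline{S}_{n_j}\to\mu$ almost surely along $(n_j)_{j\geq 1}$.

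Next, I would interpolate. Fix $M$ with $|X_i|\leq M$ for all $i$. For any $n$ with $n_j\leq n\leq n_{j+1}$, writing $n\overline{S}_n=n_j\overline{S}_{n_j}+\sum_{i=n_j+1}^{n}X_i$ and using $|X_i|\le M$,
$$
|\overline{S}_n-\overline{S}_{n_j}|\leq \Bigl(1-\tfrac{n_j}{n}\Bigr)|\overline{S}_{n_j}|+\frac{M(n-n_j)}{n}\leq \frac{2M(n_{j+1}-n_j)}{n_j}.
$$
The gap ratio satisfies $n_{j+1}/n_j=(1+1/j)^m\to 1$, so the right-hand side tends to $0$ uniformly for $n\in[n_j,n_{j+1}]$ as $j\to\infty$. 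This interpolation bound is deterministic and does not consume any of the almost-sure event obtained above.

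Combining the two ingredients via the triangle inequality, $|\overline{S}_n-\mu|\leq |\overline{S}_n-\overline{S}_{n_j}|+|\overline{S}_{n_j}-\mu|$ tends to $0$ almost surely as $n\to\infty$. The only genuine issue in the argument is the choice of spacing: it must be coarse enough to make the tail summable (forcing $m>1/\delta$) yet fine enough to keep the interpolation error negligible (here $n_{j+1}/n_j\to 1$), and a polynomial scale meets both constraints simultaneously. Anything else—e.g.\ geometric spacing—would handle summability trivially but would not produce a vanishing deterministic interpolation error without additional hypotheses on the $X_i$.
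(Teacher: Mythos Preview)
Your argument is correct and is essentially the standard second-moment method proof of this result. Note that the paper does not actually supply a proof of this lemma; it is quoted from \cite{CFF} (their Lemma~2.5) and used as a black box, so there is nothing in the paper to compare against beyond the citation. Your Chebyshev--Borel--Cantelli step along the polynomial subsequence $n_j=j^m$ with $m>1/\delta$, followed by the deterministic interpolation using $|X_i|\le M$, is exactly the expected route, and the closing remark about why polynomial spacing is the right scale is accurate.
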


For the divisor function $\tau(i)=\sum\limits_{d\mid i}1$, we know that $\tau(i)\ll_\varepsilon i^{\varepsilon}$ for any $\varepsilon>0$ (see \cite{A}, p.296). Moreover, we need the following estimates in our proofs, we state them here without a proof, where (i) can be refered to Lemma 2.4 of \cite{CFF}.
\begin{lemma}\label{lem:the estimate about tau(n)}
{\rm{(i)}} As $n\rightarrow\infty$,$$
\sum_{1\leq i\leq n}\frac{\tau(i)}{\sqrt{i}}=O(\sqrt{n}\log n),\ \ \sum_{1\leq i<j\leq n}\frac{\tau(j)}{\sqrt{j-i}}=O(n^{3/2}\log n).
$$
{\rm{(ii)}} For any integer $n\geq 2$ and real number $\theta>1$,
  $$
  \sum_{1\leq i\leq n}\frac{\tau(i)}{i}=O(\log^2 n),\ \ \sum_{i>n}\frac{\tau(i)}{i^\theta}=O_\theta(n^{1-\theta}\log n).
  $$
\end{lemma}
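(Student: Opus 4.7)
All four estimates rest on the classical Dirichlet divisor formula
$D(t):=\sum_{i\leq t}\tau(i)=t\log t+(2\gamma-1)t+O(\sqrt{t})$,
in particular $D(t)\ll t\log t$ for $t\geq 2$. The uniform strategy is to apply Abel's partial summation (or, equivalently, to swap the divisor convolution $\tau(i)=\sum_{d\mid i}1$) and then estimate the resulting elementary integrals or sums. None of the four statements requires anything beyond this combination.

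For part (i), first estimate, I would apply partial summation against $D(t)$:
$\sum_{i\leq n}\tau(i)/\sqrt{i}=D(n)/\sqrt{n}+\tfrac{1}{2}\int_1^n D(t)\,t^{-3/2}\,dt$,
which yields $\sqrt{n}\log n+O(\sqrt{n})$ after inserting $D(t)\ll t\log t$ and integrating. For the second estimate, swap the order of summation and substitute $k=j-i$ so that the inner sum becomes $\sum_{k=1}^{j-1}k^{-1/2}=O(\sqrt{j})$; the problem then reduces to $\sum_{j\leq n}\tau(j)\sqrt{j}$, and a second partial summation (again against $D(t)\ll t\log t$) yields the claimed $O(n^{3/2}\log n)$.

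For part (ii), first estimate, the hyperbola trick is cleanest:
$\sum_{i\leq n}\tau(i)/i=\sum_{d\leq n}\frac{1}{d}\sum_{m\leq n/d}\frac{1}{m}\ll\sum_{d\leq n}\frac{\log(n/d)+1}{d}$,
which is $O(\log^2 n)$. For the tail estimate, apply Abel summation on $[n,\infty)$:
$\sum_{i>n}\tau(i)/i^\theta=-D(n)/n^\theta+\theta\int_n^\infty D(t)/t^{\theta+1}\,dt$,
and with $D(t)\ll t\log t$ the integral evaluates to $O_\theta(n^{1-\theta}\log n)$, which dominates the boundary term. The only care required throughout is to preserve the logarithmic factors exactly rather than absorbing them into a weaker $O_\varepsilon(n^\varepsilon)$ bound; since Dirichlet's formula supplies the $\log$'s explicitly, each estimate follows by a routine integration, and I anticipate no serious obstacle beyond careful bookkeeping of constants and boundary terms.
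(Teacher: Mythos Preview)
Your proposal is correct: each of the four estimates follows exactly as you outline, via Abel summation against Dirichlet's $D(t)\ll t\log t$ (or, for the first estimate in (ii), the equivalent divisor-swap), and the integrals you indicate all evaluate to the stated bounds without difficulty. The paper itself does not give a proof of this lemma; it states the result without proof, merely citing \cite{CFF}, Lemma~2.4, for part~(i), so your argument in fact supplies more detail than the paper does.
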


\begin{lemma}\label{lem:the eatimate about [n/d]}
Assume $l\geq 2$ is an integer and $n\in\mathbb{N}$. We have
$$
\sum_{1\leq d\leq n}\frac{\mu(d)}{d^{l-1}}\left[\frac{n}{d}\right]=\frac{n}{\zeta(l)}+E(n),
$$
as $n\rightarrow\infty$, where ${\mu}(d)$ is the M\"{o}bius function and
$$
E(n)=\left\{
\begin{aligned}
&O_{l}(\log n),\ \ \ &{\rm{if}}\ l=2,\\
&O_{l}(1),\ \ \ &{\rm{if}}\ l>2.
\end{aligned}
\right.
$$
\end{lemma}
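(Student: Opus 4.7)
The plan is to use the standard trick of writing $[n/d] = n/d - \{n/d\}$, where $\{x\}$ denotes the fractional part of $x$. Splitting the sum accordingly gives
$$
\sum_{1\leq d\leq n}\frac{\mu(d)}{d^{l-1}}\left[\frac{n}{d}\right]
= n\sum_{1\leq d\leq n}\frac{\mu(d)}{d^{l}}
- \sum_{1\leq d\leq n}\frac{\mu(d)}{d^{l-1}}\Big\{\frac{n}{d}\Big\}.
$$
First I would handle the main term by completing the sum and using the identity $\sum_{d=1}^{\infty}\mu(d)/d^l = 1/\zeta(l)$, which is valid for $l\geq 2$. The tail
$$
n\sum_{d>n}\frac{\mu(d)}{d^{l}} \ll n\sum_{d>n}\frac{1}{d^{l}} \ll n\cdot n^{1-l} = n^{2-l}
$$
is $O(1)$ when $l\geq 2$, so this step produces the claimed main term $n/\zeta(l)$ up to an error bounded by a constant.

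Next I would estimate the secondary sum by the trivial bound $|\{n/d\}|\leq 1$ and $|\mu(d)|\leq 1$, which gives
$$
\Big|\sum_{1\leq d\leq n}\frac{\mu(d)}{d^{l-1}}\Big\{\frac{n}{d}\Big\}\Big|
\leq \sum_{1\leq d\leq n}\frac{1}{d^{l-1}}.
$$
When $l=2$ this is the harmonic sum, which is $O(\log n)$, and when $l>2$ we have $l-1>1$ so the series converges and the bound is $O_l(1)$.

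Combining the two contributions yields the stated estimate with $E(n)=O_l(\log n)$ for $l=2$ and $E(n)=O_l(1)$ for $l>2$. There is no real obstacle here; the only mild care is verifying that the main-term tail $n^{2-l}$ is absorbed into the stated error in both cases, which it is since $n^{2-l}\leq 1$ for $l\geq 2$ and $n\geq 1$.
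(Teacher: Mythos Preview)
Your proof is correct and follows essentially the same approach as the paper: both split $[n/d]=n/d+O(1)$, complete the main sum to $1/\zeta(l)$ with a tail of size $O_l(n^{2-l})$, and bound the remaining error by $\sum_{d\le n}d^{1-l}$, distinguishing $l=2$ from $l>2$. The only cosmetic difference is that you write the error via the fractional part $\{n/d\}$ explicitly, whereas the paper absorbs it directly into a big-$O$.
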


\begin{proof}
We write
\begin{align}\label{eq:sum_{d<=n}u(d)/d^{l-1}[n/d]=}
\sum_{1\leq d\leq n}\frac{\mu(d)}{d^{l-1}}\left[\frac{n}{d}\right]=n\sum_{1\leq d\leq n}\frac{\mu(d)}{d^{l}}+O\Big(\sum_{1\leq d\leq n}\frac{1}{d^{l-1}}\Big),
\end{align}
where we used the bound $|\mu(d)|\leq 1$ for $d\in\mathbb{N}$. Applying the formula
$$
\sum_{1\leq n\leq x}\frac{1}{n}=\log x+\gamma+O(x^{-1}),\ \ \ \text{as}~x\rightarrow\infty,
$$
where $\gamma$ is the Euler's constant, we obtain
\begin{align}\label{eq:estimate of_{d<=n}1/d^{l-1}}
\sum_{1\leq d\leq n}\frac{1}{d^{l-1}}=\left\{
\begin{aligned}
&O(\log n),\ \ &{\rm{if}}\ l=2,\\
&O(1),\ \ &{\rm{if}} \ l>2.
\end{aligned}
\right.
\end{align}
Since the series $\sum_{d=1}^\infty\mu(d)/d^l$ is convergent, we have
$$
\sum_{1\leq d\leq n}\frac{\mu(d)}{d^{l}}=\sum_{d=1}^\infty\frac{\mu(d)}{d^{l}}+O\big(\sum_{d>n}\frac{1}{d^{l}}\big).
$$
Using the well-known formula
\begin{align}\label{eq:sum_{d>D}1/d^{1-theta}<<}
\sum_{d>D}\frac{1}{d^\theta}=O_{\theta}(D^{1-\theta})
\end{align}
for real number $\theta>1$ and $D\geq 1$, we have
\begin{align}\label{eq:sum_{d<=n}u(d)/d^{l}=}
\sum_{1\leq d\leq n}\frac{\mu(d)}{d^{l}}=\sum_{d=1}^\infty\frac{\mu(d)}{d^{l}}+O_l(n^{1-l})=\frac{1}{\zeta(l)}+O_l(n^{1-l}).
\end{align}
Collecting \eqref{eq:sum_{d<=n}u(d)/d^{l-1}[n/d]=}, \eqref{eq:estimate of_{d<=n}1/d^{l-1}} and \eqref{eq:sum_{d<=n}u(d)/d^{l}=} gives our result.
\end{proof}

\begin{lemma}\label{lem:sum_{d_1d_2<n}=}
Suppose $n\geq1$ and $l\geq2$ are integers, then for integers $d_1$, $d_2\geq1$, we have
$$
\sum_{1\leq i\leq n}\sum_{\substack{1\leq d_1d_2\leq i\\d_1\mid i,d_2\mid i+1\\ \gcd(d_1,d_2)=1}}\frac{\mu(d_1)\mu(d_2)}{(d_1d_2)^{l-1}}=n\prod_p\big(1-\frac{2}{p^l}\big)+O_{l,\varepsilon}(n^\varepsilon), ~\text{as}~ n\rightarrow\infty.
$$

\end{lemma}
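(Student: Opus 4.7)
The strategy is to swap the order of summation, reduce the inner count to a standard congruence estimate via the Chinese Remainder Theorem, and then identify the resulting infinite sum as an Euler product. First, I would observe that since $\gcd(i,i+1)=1$, any divisors with $d_1\mid i$ and $d_2\mid i+1$ automatically satisfy $\gcd(d_1,d_2)=1$, so the coprimality condition is redundant on the inner sum and may be dropped without changing the value. After interchanging the two summations, for each fixed squarefree pair $(d_1,d_2)$ with $\gcd(d_1,d_2)=1$ and $d_1d_2\leq n$, the CRT identifies a unique residue $a=a(d_1,d_2)\bmod d_1d_2$ in which both conditions hold simultaneously. The number of $i\in[d_1d_2,n]$ in that residue class is then $n/(d_1d_2)+O(1)$, the $O(1)$ absorbing the boundary and the discarded point with $i<d_1d_2$.

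Substituting this count splits the sum into the main term
\[
n\sum_{\substack{d_1d_2\leq n\\ \gcd(d_1,d_2)=1}}\frac{\mu(d_1)\mu(d_2)}{(d_1d_2)^l}
\]
plus an error bounded by $\sum_{d_1d_2\leq n}(d_1d_2)^{-(l-1)}\leq \sum_{m\leq n}\tau(m)/m^{l-1}$. By Lemma \ref{lem:the estimate about tau(n)}(ii) this is $O_l(\log^2 n)$ for $l=2$ and $O_l(1)$ for $l>2$, hence $O_{l,\varepsilon}(n^\varepsilon)$ in either case. I would then extend the $(d_1,d_2)$-sum in the main term to all coprime pairs; the discarded tail is
\[
n\sum_{\substack{d_1d_2>n\\ \gcd(d_1,d_2)=1}}\frac{1}{(d_1d_2)^l}\leq n\sum_{m>n}\frac{\tau(m)}{m^l}\ll_l n^{2-l}\log n
\]
by Lemma \ref{lem:the estimate about tau(n)}(ii), which for $l\geq 2$ is again absorbed into $O_{l,\varepsilon}(n^\varepsilon)$.

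Finally, I would identify the completed infinite sum. Since the summand $\mu(d_1)\mu(d_2)/(d_1d_2)^l$ restricted to coprime pairs is multiplicative in $(d_1,d_2)$, it factors into an Euler product over primes. At each prime $p$ the only contributing coprime squarefree pairs are $(1,1)$, $(p,1)$, $(1,p)$, giving local factor $1-2/p^l$; hence the completed sum equals $\prod_p(1-2/p^l)$, yielding the claimed main term. The only real point to be careful about is the error-term bookkeeping: verifying that the $O(1)$ counting error and the Dirichlet-series tail are both $O_{l,\varepsilon}(n^\varepsilon)$ uniformly for every $l\geq 2$, which is dispatched cleanly by Lemma \ref{lem:the estimate about tau(n)}(ii); no deeper analytic input is required.
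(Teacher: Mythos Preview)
Your proposal is correct and follows essentially the same route as the paper: swap the order of summation, apply the Chinese Remainder Theorem to count the inner $i$'s as $n/(d_1d_2)+O(1)$, bound the resulting error and tail via the $\tau$-estimates of Lemma~\ref{lem:the estimate about tau(n)}, and identify the completed sum as an Euler product. The only cosmetic difference is that the paper substitutes $h=d_1d_2$ and writes the main term as $n\sum_h \mu(h)\tau(h)/h^l$ before invoking multiplicativity, whereas you read off the local factors $1-2/p^l$ directly from the coprime pairs $(1,1),(p,1),(1,p)$; the two computations are equivalent.
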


\begin{proof}
Denote
$$
V_{n,l}:=\sum_{1\leq i\leq n}\sum_{\substack{1\leq d_1d_2\leq i\\d_1\mid i,d_2\mid i+1\\ \gcd(d_1,d_2)=1}}\frac{\mu(d_1)\mu(d_2)}{(d_1d_2)^{l-1}}.
$$
Changing the order of the summations, we derive
$$
V_{n,l}=\sum_{\substack{1\leq d_1d_2\leq n\\ \gcd(d_1,d_2)=1}}\frac{\mu(d_1)\mu(d_2)}{(d_1d_2)^{l-1}}\sum_{\substack{d_1d_2\leq i\leq n\\ d_1\mid i,d_2\mid i+1}}1.
$$
Using the Chinese Reminder Theorem, we obtain
$$
V_{n,l}=\sum_{\substack{1\leq d_1d_2\leq n\\ \gcd(d_1,d_2)=1}}\frac{\mu(d_1)\mu(d_2)}{(d_1d_2)^{l-1}}\Big(\frac{n}{d_1d_2}+O(1)\Big).
$$
Letting $d_1d_2=h$ and using Lemma \ref{lem:the estimate about tau(n)}, we obtain
$$
V_{n,l}=n\sum_{1\leq h\leq n}\frac{\mu(h)\tau(h)}{h^{l}}+O(n^\varepsilon),
$$
here we  used the formula that $\log x=O_\varepsilon(x^\varepsilon)$ as $x\rightarrow\infty$, $\forall \varepsilon>0$.

Since the series $\sum_{h=1}^\infty\mu(h)\tau (h)/h^l$ is convergent,  we have
$$
V_{n,l}=n\sum_{h=1}^\infty\frac{\mu(h)\tau(h)}{h^{l}}+ O\Big(n\sum_{h>n}\frac{\tau(h)}{h^{l}}\Big)+O(n^\varepsilon).
$$
With the help of Lemma \ref{lem:the estimate about tau(n)}, we obtain
$$
V_{n,l}=n\sum_{h=1}^\infty\frac{\mu(h)\tau(h)}{h^{l}}+O(n^\varepsilon).
$$
Then our result follows by the fact that $\mu(h)$ and $\tau(h)$ are multiplicative.
\end{proof}

\section{Proof of Theorem \ref{thm:S_n=}}

By Lemma \ref{lem:lim S_n=u}, we only consider the expectation and variance of $\overline{S}_{n,k}$. In this section, the implied constants in the big-$O$ and $\ll$ depend at most on $\mathcal{A}$, $k,\ q$ and any given $\varepsilon>0$.
\begin{proposition}\label{prop:E(s(n))=}
Let $q\geq 1$ and $k\geq 2$ be integers. Then for any $\varepsilon>0$, we have
$$
\mathbb{E}(\overline{S}_{n,k})=\frac{1}{\zeta(k)}+O_{\mathcal{A},k,q,\varepsilon}(n^{-1/2+\varepsilon})
$$
as $n\rightarrow\infty$.
\end{proposition}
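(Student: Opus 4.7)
The plan is to compute $\mathbb{E}(\overline{S}_{n,k}) = \frac{1}{n}\sum_{i=1}^n \mathbb{P}({\bf p}_i \text{ is visible})$ by expressing each visibility probability via M\"obius inversion, reducing it to the joint-distribution estimate in Lemma~\ref{lem:the sum in l dimensions}, and then summing in $i$.

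First I would combine the visibility criterion (Lemma~\ref{lem:the critia of visibility}) with the identity $\mathbf{1}[\gcd = 1] = \sum_{d \mid \gcd} \mu(d)$ to obtain
\[
\mathbb{P}({\bf p}_i \text{ visible}) = \sum_{d\ge 1} \mu(d)\, \mathbb{P}\bigl(d \mid p_{i,a}\ \text{for every}\ 1\le a\le k\bigr).
\]
Since $p_{i,1}+\cdots+p_{i,k}=i$, requiring $d$ to divide every coordinate is equivalent to requiring $d\mid i$ together with $d\mid p_{i,a}$ for $a=1,\dots,k-1$, so the outer sum collapses to $d\mid i$ (and in particular to $d\le i$).

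Next, for a fixed $d\mid i$, I would expand the conditional probability in terms of the underlying type-$\boldsymbol{\alpha}_t$ step counts. Writing $i_t$ for the number of type-$\boldsymbol{\alpha}_t$ steps among the first $i$ (so $i_1+\cdots+i_q=i$) and $s_{t,a}$ for the number of those steps that move in direction $a$, we have $p_{i,a}=s^{(a)}$, and the joint law of the $s_{t,a}$ factors across $t$ into independent multinomials contributing the weight $\prod_{t=1}^q P_{i_t,{\bf s}_t,\boldsymbol{\alpha}_t}$. The probability of simultaneous divisibility is therefore the sum of this product over all admissible $s_{t,a}$ with $s^{(a)}\equiv 0\pmod d$ for $1\le a\le k-1$, which is precisely the quantity treated by Lemma~\ref{lem:the sum in l dimensions} with $g_1=\cdots=g_{k-1}=0$. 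That lemma supplies the value $d^{-(k-1)} + O(\log i/\sqrt{i})$ uniformly in $d$.

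Averaging over $1\le i\le n$ and switching the order of summation, the main term becomes $\frac{1}{n}\sum_{d=1}^{n} \frac{\mu(d)}{d^{k-1}}[n/d]$, which by Lemma~\ref{lem:the eatimate about [n/d]} equals $\zeta(k)^{-1}+O(E(n)/n)$. The accumulated error is $\frac{1}{n}\sum_{i=1}^n O(\tau(i)\log i/\sqrt{i})$, which I would control with the divisor bound $\tau(i)\ll_\varepsilon i^\varepsilon$ to obtain $O(n^{-1/2+\varepsilon})$; this dominates the $E(n)/n$ contribution. The only substantive analytic input is Lemma~\ref{lem:the sum in l dimensions}, and once it is in hand the remaining steps are a routine M\"obius inversion and a mean-value estimate. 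The main point calling for care is verifying that all implied constants depend only on $\mathcal{A},k,q,\varepsilon$ and not on the particular sequence $(\boldsymbol{\alpha}'_j)$ of step types chosen from $\mathcal{A}$; this is automatic from the uniformity in $d$ provided by Lemma~\ref{lem:the sum in l dimensions}, coupled with the observation that at least one $i_T$ satisfies $i_T\ge i/q$.
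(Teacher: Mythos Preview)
Your proposal is correct and follows essentially the same route as the paper: M\"obius inversion via Lemma~\ref{lem:the critia of visibility}, reduction of the divisibility constraint to $d\mid i$ together with $k-1$ congruences, application of Lemma~\ref{lem:the sum in l dimensions} to obtain $\mathbb{E}(X_i)=\sum_{d\mid i}\mu(d)/d^{k-1}+O(i^{-1/2+\varepsilon}\tau(i))$, and then Lemma~\ref{lem:the eatimate about [n/d]} for the averaged main term. The only cosmetic difference is that the paper bounds the accumulated error using the summed estimate $\sum_{i\le n}\tau(i)/\sqrt{i}\ll\sqrt{n}\log n$ from Lemma~\ref{lem:the estimate about tau(n)}(i), whereas you invoke the pointwise bound $\tau(i)\ll_\varepsilon i^\varepsilon$; both yield $O(n^{-1/2+\varepsilon})$.
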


\begin{proof}
By the definition of ${X}_i$, we have
$$
\mathbb{E}({X}_i)=\mathbb{P}({\bf p}_i\ \rm{is\ visible}).
$$
In the first $i$ steps in a type-$\mathcal{A}$ random walk, suppose type-${\boldsymbol{\alpha}}_t$ step is totally $i_t$ steps, then $0\leq i_1,\cdots,i_q\leq i$ and $i_1+\cdots+i_q=i$. In these $i_t$ steps, suppose there are $s_{t,1}$ steps in the direction of $(1,0,\cdots,0)$, $\cdots$, $s_{t,k}$ steps in the direction of $(0,0,\cdots,1)$. Thus the coordinate of ${\bf p}_i$ is of the form
$$
{\bf p}_i=(s^{(1)},\cdots,s^{(k)}),
$$
where $s^{(a)}:=s_{1,a}+\cdots+s_{q,a}$ ($1\leq a\leq k$) for some $0\leq s_{t,1},\cdots,s_{t,k}\leq i_t$ and $s_{t,1}+\cdots+s_{t,k}=i_t$, $1\leq t\leq q$. The probability that ${\bf p}_i$ is of such form is
$$
\prod_{t=1}^q P_{i_t,{\bf s}_t,{\boldsymbol{\alpha}}_t}=\prod_{t=1}^q\binom{i_t}{s_{t,1},\cdots,s_{t,k}}\alpha_{t,1}^{s_{t,1}}\cdots \alpha_{t,k}^{s_{t,k}}.
$$
It follows that
\begin{align}\label{eq:E(X_i)=(1)}
\mathbb{E}({X}_i)=\sum_{\substack{0\leq s_{t,1},\cdots,s_{t,k}\leq i_t\\s_{t,1}+\cdots+s_{t,k}=i_t\\ \forall 1\leq t\leq q\\ (s^{(1)},\cdots,s^{(k)})\ \rm{is\ visible}}}\prod_{t=1}^q P_{i_t,{\bf s}_t,{\boldsymbol{\alpha}}_t},
\end{align}
where ${\bf s}_t=(s_{t,1},\cdots,s_{t,k})$. By Lemma \ref{lem:the critia of visibility}, we have
$$
\mathbb{E}({X}_i)=\sum_{\substack{0\leq s_{t,1},\cdots,s_{t,k}\leq i_t\\s_{t,1}+\cdots+s_{t,k}=i_t\\ \forall 1\leq t\leq q\\ \gcd(s^{(1)},\cdots,s^{(k)})=1}}\prod_{t=1}^q P_{i_t,{\bf s}_t,{\boldsymbol{\alpha}}_t}.
$$
Using the M\"{o}bius inversion formula
\begin{align}\label{eq:mobius formula}
\sum_{d\mid n}\mu(d)=\left\{
\begin{aligned}
&1,\ \ {\rm if}\ n=1,\\
&0,\ \ {\rm otherwise},
\end{aligned}
\right.
\end{align}
we obtain
$$
\mathbb{E}({X}_i)=\sum_{\substack{0\leq s_{t,1},\cdots,s_{t,k}\leq i_t\\s_{t,1}+\cdots+s_{t,k}=i_t\\ \forall 1\leq t\leq q}}\prod_{t=1}^q P_{i_t,{\bf s}_t,{\boldsymbol{\alpha}}_t}\sum_{d\mid\gcd(s^{(1)},\cdots,s^{(k)})}\mu(d).
$$
By the definition of $\gcd(\ast)$, we have
$$
\mathbb{E}({X}_i)=\sum_{\substack{0\leq s_{t,1},\cdots,s_{t,k}\leq i_t\\s_{t,1}+\cdots+s_{t,k}=i_t\\ \forall 1\leq t\leq q}}\prod_{t=1}^q P_{i_t,{\bf s}_t,{\boldsymbol{\alpha}}_t}\sum_{\substack{d\mid s^{(a)},\\ \forall1\leq a\leq k}}\mu(d).
$$
Changing the order of the summations gives us
$$
\mathbb{E}({X}_i)=\sum_{1\leq d\leq i}\mu(d)\sum_{\substack{0\leq s_{t,1},\cdots,s_{t,k}\leq i_t\\s_{t,1}+\cdots+s_{t,k}=i_t\\ \forall 1\leq t\leq q\\d\mid s^{(a)},\forall1\leq a\leq k}}\prod_{t=1}^q P_{i_t,{\bf s}_t,{\boldsymbol{\alpha}}_t},
$$
and furthermore, we have
\begin{align}\label{eq:the equation of EX_i about gcd}
\mathbb{E}({X}_i)=\sum_{d\mid i}\mu(d)\sum_{\substack{0\leq s_{t,1},\cdots,s_{t,k}\leq i_t\\s_{t,1}+\cdots+s_{t,k}=i_t\\ \forall 1\leq t\leq q\\d\mid s^{(a)},\forall1\leq a\leq k-1}}\prod_{t=1}^q P_{i_t,{\bf s}_t,{\boldsymbol{\alpha}}_t}.
\end{align}
By Lemma \ref{lem:the sum in l dimensions}, we deduce
\begin{align}\label{eq:E(X_i)=(2)}
\nonumber\mathbb{E}({X}_i)&=\sum_{d\mid i}\mu(d)\Big(\frac{1}{d^{k-1}}+O(i^{-1/2+\varepsilon})\Big)\\
&=\sum_{d\mid i}\frac{\mu(d)}{d^{k-1}}+O\left(i^{-1/2+\varepsilon}\tau(i)\right)
\end{align}
for any $\varepsilon>0$, where $\tau(i)$ is the divisor function.

By the definition of $\overline{S}_{n,k}$ and the additivity of the expectation, we have
$$
\mathbb{E}(\overline{S}_{n,k})=\frac{1}{n}\sum_{1\leq i\leq n}\mathbb{E}({X}_i)=\frac{1}{n}\sum_{1\leq i\leq n}\sum_{d\mid i}\frac{\mu(d)}{d^{k-1}}+O\Big(\frac{1}{n}\sum_{1\leq i\leq n}i^{-1/2+\varepsilon}\tau(i)\Big),
$$
which gives
$$
\mathbb{E}(\overline{S}_{n,k})=\frac{1}{n}\sum_{1\leq d\leq n}\frac{\mu(d)}{d^{k-1}}\left[\frac{n}{d}\right]+O\bigg(n^{-1+\varepsilon}\sum_{1\leq i\leq n}\frac{\tau(i)}{\sqrt{i}}\bigg).
$$
Then our desired result follows from Lemma \ref{lem:the eatimate about [n/d]} and Lemma \ref{lem:the estimate about tau(n)}.
 \end{proof}

 Now we estimate the variance of $\overline{S}_{n,k}$.
\begin{proposition}\label{prop:V(s(n))=}
Let $q\geq1$ and $k\geq 2$ be integers. Then for any $\varepsilon>0$, we have
$$
\mathbb{V}(\overline{S}_{n,k})=O_{\mathcal{A},k,q,\varepsilon}(n^{-1/2+\varepsilon})
$$
as $n\rightarrow\infty$.
\end{proposition}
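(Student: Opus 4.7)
The plan is to estimate $\mathbb{V}(\overline{S}_{n,k}) = \mathbb{E}(\overline{S}_{n,k}^2) - \mathbb{E}(\overline{S}_{n,k})^2$ by expanding
$$
\mathbb{V}(\overline{S}_{n,k}) = \frac{1}{n^2}\sum_{i=1}^n\sum_{j=1}^n\Big(\mathbb{E}(X_iX_j) - \mathbb{E}(X_i)\mathbb{E}(X_j)\Big).
$$
The diagonal contribution $i=j$ is harmless because $X_i^2 = X_i$, giving $\mathbb{E}(X_i^2) - \mathbb{E}(X_i)^2 \leq 1$ and thus a total contribution $O(1/n)$. So the task reduces to controlling the off-diagonal covariances $\mathbb{E}(X_iX_j) - \mathbb{E}(X_i)\mathbb{E}(X_j)$ for $1\leq i<j\leq n$.

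For the cross term with $i<j$, I would mirror the proof of Proposition \ref{prop:E(s(n))=} but exploit the independence between steps $1,\dots,i$ and steps $i{+}1,\dots,j$. Writing ${\bf p}_i = (s^{(1)},\dots,s^{(k)})$ and ${\bf p}_j - {\bf p}_i = (r^{(1)},\dots,r^{(k)})$, and applying Lemma \ref{lem:the critia of visibility} together with the M\"obius inversion formula \eqref{eq:mobius formula} to both visibility conditions, one obtains
$$
\mathbb{E}(X_iX_j) = \sum_{d_1\mid i}\sum_{d_2\mid j}\mu(d_1)\mu(d_2)\,\mathbb{P}\!\left(d_1\mid s^{(a)}\ \forall a,\ d_2\mid s^{(a)}+r^{(a)}\ \forall a\right).
$$
The probability splits into an outer sum over configurations of the first $i$ steps and, for each fixed residue $s^{(a)}\bmod d_2$, an inner sum over configurations of the last $j-i$ steps with $r^{(a)}\equiv -s^{(a)}\pmod{d_2}$. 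The crucial point is that Lemma \ref{lem:the sum in l dimensions} applies uniformly in the shift parameters $g_a$, so the inner sum equals $\frac{1}{d_2^{k-1}} + O\!\left(\frac{\log(j-i)}{\sqrt{j-i}}\right)$ independent of $s^{(a)}$; peeling this off and applying Lemma \ref{lem:the sum in l dimensions} a second time to the outer sum yields
$$
\mathbb{P}\!\left(d_1\mid {\bf p}_i,\ d_2\mid {\bf p}_j\right) = \frac{1}{(d_1d_2)^{k-1}} + O\!\left(\frac{\log i}{d_2^{k-1}\sqrt{i}}\right) + O\!\left(\frac{\log(j-i)}{\sqrt{j-i}}\right).
$$
Comparing with the product $\mathbb{E}(X_i)\mathbb{E}(X_j)$, whose leading term is $\bigl(\sum_{d_1\mid i}\mu(d_1)/d_1^{k-1}\bigr)\bigl(\sum_{d_2\mid j}\mu(d_2)/d_2^{k-1}\bigr)$ by \eqref{eq:E(X_i)=(2)}, the main terms cancel and one is left with
$$
\big|\mathbb{E}(X_iX_j) - \mathbb{E}(X_i)\mathbb{E}(X_j)\big| \ll \tau(i)\tau(j)\left(\frac{\log i}{\sqrt{i}} + \frac{\log(j-i)}{\sqrt{j-i}} + i^{-1/2+\varepsilon}\right).
$$

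Finally I would sum these bounds. Using $\tau(m)\ll_\varepsilon m^\varepsilon$ together with Lemma \ref{lem:the estimate about tau(n)}(i), the contribution of the first piece to $n^{-2}\sum_{i<j}$ is $\ll n^{-2}\cdot n\cdot (\sqrt{n}\log n)\cdot n^\varepsilon \ll n^{-1/2+\varepsilon}$, while the second piece gives $\ll n^{-2}\cdot n^{3/2}\log n\cdot n^\varepsilon \ll n^{-1/2+\varepsilon}$ directly. Combining with the diagonal bound $O(1/n)$ yields the desired $\mathbb{V}(\overline{S}_{n,k}) = O_{\mathcal{A},k,q,\varepsilon}(n^{-1/2+\varepsilon})$. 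The main obstacle is the one identified at the Lemma \ref{lem:the sum in l dimensions} application: one must confirm that the bound there is \emph{uniform} in the shift $g_a$, so that after separating out the inner sum over $(r_{t,a})$ the outer sum over $(s_{t,a})$ no longer sees the coupling through the residues $s^{(a)}\bmod d_2$; once this uniformity is invoked, the rest is bookkeeping with divisor sums via Lemma \ref{lem:the estimate about tau(n)}.
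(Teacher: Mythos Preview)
Your proposal is correct and follows essentially the same route as the paper: both arguments apply M\"obius inversion to the two visibility conditions, use Lemma~\ref{lem:the sum in l dimensions} twice (first on the inner increment sum, then on the outer sum), and rely crucially on the uniformity of that lemma in the shifts $g_a$ so that the main terms factor as $\bigl(\sum_{d_1\mid i}\mu(d_1)/d_1^{k-1}\bigr)\bigl(\sum_{d_2\mid j}\mu(d_2)/d_2^{k-1}\bigr)$. The only cosmetic difference is that the paper computes $\mathbb{E}\bigl(\overline{S}_{n,k}^2\bigr)$ and $\bigl(\mathbb{E}\,\overline{S}_{n,k}\bigr)^2$ separately (showing each equals $1/\zeta(k)^2+O(n^{-1/2+\varepsilon})$) rather than subtracting termwise to bound the individual covariances; the underlying estimates and the final bookkeeping via Lemma~\ref{lem:the estimate about tau(n)} are the same.
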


\begin{proof}
To deal with $\mathbb{V}(\overline{S}_{n,k})$, we write
\begin{align}\label{eq:V(s(n))=E(s(n))^2-(E(s(n)))^2}
\mathbb{V}(\overline{S}_{n,k})=\mathbb{E}\big((\overline{S}_{n,k})^2\big)-\big(\mathbb{E}(\overline{S}_{n,k})\big)^2.
\end{align}
By Proposition \ref{prop:E(s(n))=}, we have
\begin{align}\label{eq:(E(s(n)))^2=}
\big(\mathbb{E}(\overline{S}_{n,k})\big)^2=\Big(\frac{1}{\zeta(k)}+O(n^{-1/2+\varepsilon})\Big)^2=\Big(\frac{1}{\zeta(k)}\Big)^2+O(n^{-1/2+\varepsilon}).
\end{align}
To compute $\mathbb{E}\big((\overline{S}_{n,k})^2\big)$, we expand the square and obtain
\begin{align}\label{eq:E((s(n))^2)=sum_E(X_iX_j)+sum_E(X_i^2)}
\mathbb{E}\big((\overline{S}_{n,k})^2\big)=\frac{2}{n^2}\sum_{1\leq i<j\leq n}\mathbb{E}({X}_i{X}_j)+\frac{1}{n^2}\sum_{1\leq i\leq n}\mathbb{E}({{X}_i}^2).
\end{align}
Observe that
\begin{align}\label{eq:sum EX_i^2=}
\sum_{1\leq i\leq n}\mathbb{E}({{X}_i}^2)=\sum_{1\leq i\leq n}\mathbb{E}({X}_i)=O(n).
\end{align}
For $\sum\limits_{1\leq i<j\leq n}\mathbb{E}({X}_i{X}_j)$, by the definition of ${X}_i$, we have
$$
\mathbb{E}({X}_i{X}_j)=\mathbb{P}({\bf p}_i\ {\rm{and}}\ {\bf p}_j\ {\rm{are\ both\ visible}}).
$$
By a similar argument as that before \eqref{eq:E(X_i)=(1)}, for $0\leq i_1,\cdots,i_q\leq i$ with $i_1+\cdots+i_q=i$ and $0\leq j_1,\cdots,j_q\leq j-i$ with $j_1+\cdots+j_q=j-i$, we see that the probability of
\begin{align*}
  {\bf p}_i&=(s^{(1)},\cdots,s^{(k)}),  \\
  {\bf p}_j&=(s^{(1)}+r^{(1)},\cdots,s^{(k)}+r^{(k)})
\end{align*}
is (here it means step $j$ depends on step $i$)
$$
\prod_{t=1}^q\binom{i_t}{s_{t,1},\cdots,s_{t,k}}\alpha_{t,1}^{s_{t,1}}\cdots \alpha_{t,k}^{s_{t,k}}
\binom{j_t}{r_{t,1},\cdots,r_{t,k}}\alpha_{t,1}^{r_{t,1}}\cdots \alpha_{t,k}^{r_{t,k}},
$$
where $0\leq s_{t,1},\cdots,s_{t,k}\leq i_t$  with $s_{t,1}+\cdots+s_{t,k}=i_t$, and $0\leq r_{t,1},\cdots,r_{t,k}\leq j_t$ with $r_{t,1}+\cdots+r_{t,k}=j_t,\ \forall1\leq t\leq q$.

It follows that
$$
\mathbb{E}({X}_i{X}_j)=\sum_{\substack{0\leq s_{t,1},\cdots,s_{t,k}\leq i_t\\s_{t,1}+\cdots+s_{t,k}=i_t\\ \forall 1\leq t\leq q\\ (s^{(1)},\cdots,s^{(k)})\ \rm{is\ visible}}}\prod_{t=1}^q P_{i_t,{\bf s}_t,{\boldsymbol{\alpha}}_t}\sum_{\substack{0\leq r_{t,1},\cdots,r_{t,k}\leq j_t\\r_{t,1}+\cdots+r_{t,k}=j_t\\ \forall 1\leq t\leq q\\ (s^{(1)}+r^{(1)},\cdots,s^{(k)}+r^{(k)})\ \rm{is\ visible}}}\prod_{t=1}^q P_{j_t,{\bf r}_t,{\boldsymbol{\alpha}}_t},
$$
where ${\bf r}_t=(r_{t,1},\cdots,r_{t,k})$. Further, by Lemma \ref{lem:the critia of visibility}, we have
\begin{align}\label{eq:E(X_iX_j)=}
\mathbb{E}({X}_i{X}_j)=\sum_{\substack{0\leq s_{t,1},\cdots,s_{t,k}\leq i_t\\s_{t,1}+\cdots+s_{t,k}=i_t\\ \forall 1\leq t\leq q\\ \gcd(s^{(1)},\cdots,s^{(k)})=1}}\prod_{t=1}^q P_{i_t,{\bf s}_t,{\boldsymbol{\alpha}}_t}\sum_{\substack{0\leq r_{t,1},\cdots,r_{t,k}\leq j_t\\r_{t,1}+\cdots+r_{t,k}=j_t\\ \forall 1\leq t\leq q\\ \gcd(s^{(1)}+r^{(1)},\cdots,s^{(k)}+r^{(k)})=1}}\prod_{t=1}^q P_{j_t,{\bf r}_t,{\boldsymbol{\alpha}}_t}.
\end{align}
By similar argument as that yields \eqref{eq:the equation of EX_i about gcd}, we obtain
$$
\mathbb{E}({X}_i{X}_j)=\bigg(\sum_{d\mid i}\mu(d)\sum_{\substack{0\leq s_{t,1},\cdots,s_{t,k}\leq i_t\\s_{t,1}+\cdots+s_{t,k}=i_t\\ \forall 1\leq t\leq q\\d\mid s^{(a)},\forall1\leq a\leq k-1}}\prod_{t=1}^q P_{i_t,{\bf s}_t,{\boldsymbol{\alpha}}_t}\bigg)
\bigg(\sum_{d\mid j}\mu(d)\sum_{\substack{0\leq r_{t,1},\cdots,r_{t,k}\leq j_t\\r_{t,1}+\cdots+r_{t,k}=j_t\\ \forall 1\leq t\leq q\\d\mid s^{(a)}+r^{(a)},\forall1\leq a\leq k-1}}\prod_{t=1}^q P_{j_t,{\bf r}_t,{\boldsymbol{\alpha}}_t}\bigg).
$$
Applying Lemma \ref{lem:the sum in l dimensions}, we have
$$
\mathbb{E}({X}_i{X}_j)=\Big(\sum_{d\mid i}\frac{\mu(d)}{d^{k-1}}+O\left(i^{-1/2+\varepsilon}\tau(i)\right)\Big)\Big(\sum_{d\mid j}\frac{\mu(d)}{d^{k-1}}+O\left((j-i)^{-1/2+\varepsilon}\tau(j)\right)\Big).
$$
With the help of the bound
\begin{align}\label{eq:sum_{d|i}u(d)/d^{k-1}<<}
\sum_{d\mid i}\frac{\mu(d)}{d^{k-1}}\ll\sum_{1\leq d\leq i}\frac{1}{d}\ll\log i
\end{align}
and $\tau(i)/\sqrt{i}\leq C$ for some $C>0$ and all $i$ (see \cite{HW}, Theorem 315), we obtain
$$
\mathbb{E}({X}_i{X}_j)=\sum_{d\mid i}\frac{\mu(d)}{d^{k-1}}\sum_{d\mid j}\frac{\mu(d)}{d^{k-1}}+O\left(i^{-1/2+\varepsilon}j^\varepsilon\tau(i)\right)+O\left(i^\varepsilon(j-i)^{-1/2+\varepsilon}\tau(j)\right).
$$
Hence
$$
\sum_{1\leq i<j\leq n}\mathbb{E}({X}_i{X}_j)=\sum_{1\leq i<j\leq n}\sum_{d\mid i}\frac{\mu(d)}{d^{k-1}}\sum_{d\mid j}\frac{\mu(d)}{d^{k-1}}+O(n^{3/2+\varepsilon}),
$$
here we  used Lemma \ref{lem:the estimate about tau(n)} to bound the error term.

For the first term on the right hand side of the above equation, we may add diagonal terms up to an error term $\ll n^{1+\varepsilon}$ by \eqref{eq:sum_{d|i}u(d)/d^{k-1}<<} and obtain
\begin{equation}\label{eq:E(X_iX_j)=(1)}
\sum_{1\leq i<j\leq n}\mathbb{E}({X}_i{X}_j)=\frac{1}{2}\bigg(\sum_{1\leq i\leq n}\sum_{d\mid i}\frac{\mu(d)}{d^{k-1}}\bigg)^2+O(n^{3/2+\varepsilon}).
\end{equation}
By Lemma \ref{lem:the eatimate about [n/d]}, we derive
\begin{align}\label{eq:sum EX_iX_j=}
\begin{aligned}
\sum_{1\leq i<j\leq n}\mathbb{E}({X}_i{X}_j)&=\frac{1}{2}\bigg(\sum_{1\leq d\leq n}\frac{\mu(d)}{d^{k-1}}\left[\frac{n}{d}\right]\bigg)^2+O(n^{3/2+\varepsilon})\\
&=\frac{1}{2}\Big(\frac{n}{\zeta(k)}\Big)^2+O(n^{3/2+\varepsilon}).
\end{aligned}
\end{align}
Plugging \eqref{eq:sum EX_i^2=} and \eqref{eq:sum EX_iX_j=} into \eqref{eq:E((s(n))^2)=sum_E(X_iX_j)+sum_E(X_i^2)}, we have
\begin{align}\label{eq:E((s(n))^2)=}
\mathbb{E}\big((\overline{S}_{n,k})^2\big)=\Big(\frac{1}{\zeta(k)}\Big)^2+O(n^{-1/2+\varepsilon}).
\end{align}
Inserting \eqref{eq:(E(s(n)))^2=} and \eqref{eq:E((s(n))^2)=} into \eqref{eq:V(s(n))=E(s(n))^2-(E(s(n)))^2} yields our desired result.

Now Theorem \ref{thm:S_n=} follows from Propositions \ref{prop:E(s(n))=} and \ref{prop:V(s(n))=}, and Lemma \ref{lem:lim S_n=u}.
\end{proof}

\section{Proof of Theorem \ref{thm:delta(a,m)=}}

 Similar to the proof of Theorem \ref{thm:S_n=}, we need to compute the expectation and variance of $\overline{S}_{n,k}(a;m)$.
In this section, the implied constants in the big-$O$ or $\ll$ depend at most on $\mathcal{A},\ k,\ q$ and any given $\varepsilon>0$.
\bigskip

We first deal with the expectation of $\overline{S}_{n,k}(a;2^r)$.
\begin{proposition}\label{prop:E(S(a,2^l))=}
Let $q,\ r\geq 1$ and $k\geq 2$ be integers. Then for any $\varepsilon>0$, we have
$$
\mathbb{E}\big(\overline{S}_{n,k}(a;2^r)\big)=\frac{2^{k-r}}{2^{k}-1}\cdot\frac{1}{\zeta(k)}+O_{\mathcal{A},k,q,\varepsilon}(n^{-1/2+\varepsilon})\quad\text{if }\ a\ \text{is odd}
$$
and
$$
\mathbb{E}\big(\overline{S}_{n,k}(a;2^r)\big)=\frac{2^{k-1}-1}{2^{r-1}(2^{k}-1)}\cdot\frac{1}{\zeta(k)}+O_{\mathcal{A},k,q,\varepsilon}(n^{-1/2+\varepsilon})\quad\text{if }\ a\ \text{is even}
$$
as $n\rightarrow\infty$.
\end{proposition}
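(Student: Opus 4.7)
The plan is to mirror the proof of Proposition \ref{prop:E(s(n))=}, but restrict the average of $\mathbb{E}(X_i)$ to the residue class $i\equiv a\pmod{2^r}$. From that earlier proof I already have
\[
\mathbb{E}(X_i)=\sum_{d\mid i}\frac{\mu(d)}{d^{k-1}}+O\bigl(i^{-1/2+\varepsilon}\tau(i)\bigr),
\]
so averaging reduces the task to evaluating a single main term
\[
M=\frac{1}{n}\sum_{\substack{1\le i\le n\\ i\equiv a(\bmod\,2^r)}}\sum_{d\mid i}\frac{\mu(d)}{d^{k-1}},
\]
the error being $O(n^{-1/2+\varepsilon})$ by the same Lemma \ref{lem:the estimate about tau(n)} bound used before.

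The next step is to interchange summations and count $i\le n$ lying simultaneously in $d\mathbb{Z}$ and in the progression $a+2^r\mathbb{Z}$. Since $\mu(d)$ is supported on squarefree $d$, only $v_2(d)\in\{0,1\}$ matters, and the Chinese remainder theorem turns the count into $n/\mathrm{lcm}(d,2^r)+O(1)$, provided the two congruences are compatible, i.e.\ $2^{v_2(d)}\mid a$. Concretely, odd $d$ always contribute $n/(2^r d)+O(1)$, while $d=2d'$ with $d'$ odd contribute $2n/(2^r d)+O(1)=n/(2^r d')+O(1)$, but only when $a$ is even. The residual $O(1)$ pieces accumulate to $O(n^{\varepsilon})$ via the crude bound $\sum_{d\le n}|\mu(d)|/d^{k-1}\ll\log n$.

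In the odd-$a$ case only odd $d$ survive, and I would complete the sum $\sum_{d\text{ odd}}\mu(d)/d^k$ to its Euler product $\prod_{p\text{ odd}}(1-p^{-k})=\tfrac{1}{\zeta(k)(1-2^{-k})}=\tfrac{2^k}{(2^k-1)\zeta(k)}$; multiplying by the $1/2^r$ from the CRT count yields $\frac{2^{k-r}}{(2^k-1)\zeta(k)}$, as required. For even $a$ I would combine the odd-$d$ contribution with the $d=2d'$ contribution, using $\mu(2d')=-\mu(d')$; the sign pulls a factor $1-\frac{1}{2^{k-1}}$ in front of the same Euler product, producing
\[
\frac{1}{2^r}\left(1-\frac{1}{2^{k-1}}\right)\cdot\frac{2^k}{(2^k-1)\zeta(k)}=\frac{2^{k-1}-1}{2^{r-1}(2^k-1)\zeta(k)}.
\]
A sanity check is that the answer depends on $a$ only through its parity, not through $v_2(a)$, which is a direct consequence of the squarefree truncation $v_2(d)\le 1$, and that averaging over $a\in\{0,\dots,2^r-1\}$ recovers $1/\zeta(k)$ as in Theorem \ref{thm:S_n=}.

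The hard part is really none of the arithmetic above, since everything specializes results already in the preliminaries; the only mildly delicate bookkeeping is tracking the sign and the factor $1/2^{k-1}$ arising from the $v_2(d)=1$ contribution, which is precisely where the two different answers $2^{k-r}$ versus $2^{k-1}-1$ originate. Once this is set up, both cases fall out as straightforward Euler-product evaluations.
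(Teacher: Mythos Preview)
Your proposal is correct and follows essentially the same approach as the paper: start from the formula $\mathbb{E}(X_i)=\sum_{d\mid i}\mu(d)/d^{k-1}+O(i^{-1/2+\varepsilon}\tau(i))$, interchange summations, use the Chinese remainder theorem to count $i\le n$ with $d\mid i$ and $i\equiv a\pmod{2^r}$, and evaluate the resulting Dirichlet series via its Euler product. The only minor difference is organizational: the paper treats the cases $a=0$, $a$ odd, and $a$ even with $a\neq 0$ separately (reducing the last to the first via $\mu(d)=0$ when $2^{b+1}\mid d$), whereas you invoke the squarefree support of $\mu$ at the outset to reduce immediately to $v_2(d)\in\{0,1\}$ and split the sum accordingly---this is slightly more streamlined but mathematically equivalent.
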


\begin{proof}
It follows by \eqref{eq:E(X_i)=(2)} that
$$
\mathbb{E}({X}_i)=\sum_{d\mid i}\frac{\mu(d)}{d^{k-1}}+O\left(i^{-1/2+\varepsilon}\tau(i)\right).
$$
Hence by Lemma \ref{lem:the estimate about tau(n)}, we have
\begin{equation}\label{eq:E(S(a,2^l))=}
\mathbb{E}\big(\overline{S}_{n,k}(a;2^r)\big)=T_{n,k}(a;2^r)+O(n^{-1/2+\varepsilon}),
\end{equation}
where
$$
T_{n,k}(a;2^r):=\frac{1}{n}\sum_{\substack{1\leq i\leq n\\i\equiv a(\bmod 2^r)}}\sum_{d\mid i}\frac{\mu(d)}{d^{k-1}}.
$$
For $T_{n,k}(a;2^r)$, we change the order of the summations,
$$
T_{n,k}(a;2^r)=\frac{1}{n}\sum_{1\leq d\leq n}\frac{\mu(d)}{d^{k-1}}\sum_{\substack{1\leq i\leq n\\i\equiv a(\bmod 2^r)\\i\equiv0(\bmod d)}}1.
$$
Note that the system of congruence equations
\begin{align}\label{eq:system of congruence equation}
\begin{cases}
i\equiv a(\bmod\ 2^r)\\
i\equiv 0(\bmod\ d)
\end{cases}
\end{align}
has solutions if and only if $\gcd(2^r,d)\mid a$, in which case, it has solutions
 $i\equiv c_1\big(\bmod\ \text{lcm}(2^r,d)\big)$ for some $c_1\in\mathbb{Z}$. 

If $a=0$, then by \eqref{eq:sum_{d|i}u(d)/d^{k-1}<<}, we derive
$$
\begin{aligned}
T_{n,k}(0;2^r)&=\frac{1}{n}\sum_{1\leq d\leq n}\frac{\mu(d)}{d^{k-1}}\Big(\frac{\gcd(2^r,d)n}{2^rd}+O(1)\Big)\\
&=\frac{1}{2^r}\sum_{1\leq d\leq n}\frac{\gcd(2^r,d)\mu(d)}{d^{k}}+O(n^{-1+\varepsilon}).
\end{aligned}
$$
Since the series $\sum_{d=1}^\infty \gcd(2^r,d)\mu(d)/d^k$ is convergent, by \eqref{eq:sum_{d>D}1/d^{1-theta}<<}, we have
$$
\begin{aligned}
T_{n,k}(0;2^r)&=\frac{1}{2^r}\sum_{d=1}^\infty\frac{\gcd(2^r,d)\mu(d)}{d^{k}}+O\big(\sum_{d>n}\frac{1}{d^k}\big)+O(n^{-1+\varepsilon})\\
&=\frac{1}{2^r}\sum_{d=1}^\infty\frac{\gcd(2^r,d)\mu(d)}{d^{k}}+O(n^{-1+\varepsilon}).
\end{aligned}
$$
Since $\gcd(2^r,d)$ and $\mu(d)$ are multiplicative in terms of $d$, we derive
\begin{equation}\label{eq:T(a,2^l) for a=0(2)}
T_{n,k}(0;2^r)=\frac{2^{k-1}-1}{2^{r-1}(2^{k}-1)}\cdot\frac{1}{\zeta(k)}+O(n^{-1+\varepsilon}).
\end{equation}

If $a$ is odd, then \eqref{eq:system of congruence equation} has solutions if and only if  $\gcd(2^r,d)=1$,  we have
$$
T_{n,k}(a;2^r)=\frac{1}{n}\sum_{\substack{1\leq d\leq n\\ \gcd(2^r,d)=1}}\frac{\mu(d)}{d^{k-1}}\Big(\frac{n}{2^rd}+O(1)\Big)
=\frac{1}{2^r}\sum_{\substack{1\leq d\leq n\\ \gcd(2^r,d)=1}}\frac{\mu(d)}{d^{k}}+O(n^{-1+\varepsilon}).
$$
If we define the function
$$
f(d):=\left\{
\begin{aligned}
&\frac{\mu(d)}{d^{k}},\ \ {\rm{if}}\ \gcd(2^r,d)=1,\\
&0,\ \ \ \ \ \ \ \rm{otherwise},
\end{aligned}
\right.
$$
then we have
$$
T_{n,k}(a;2^r)=\frac{1}{2^r}\sum_{1\leq d\leq n}f(d)+O(n^{-1+\varepsilon})=\frac{1}{2^r}\sum_{d=1}^\infty f(d)+O\big(\sum_{d>n}\frac{1}{d^k}\big)+O(n^{-1+\varepsilon}).
$$
Hence by \eqref{eq:sum_{d>D}1/d^{1-theta}<<} and that  $f(d)$ is multiplicative, we obtain
\begin{equation}\label{eq:T(a,2^l) for a odd}
T_{n,k}(a;2^r)=\frac{2^{k-r}}{2^{k}-1}\cdot\frac{1}{\zeta(k)}+O(n^{-1+\varepsilon}).
\end{equation}

If $a$ is even ($a\neq 0$), we suppose $\gcd(2^r,a)=2^b,\ 1\leq b\leq r-1$, then \eqref{eq:system of congruence equation} has solutions if and only if $2^{b+1}\nmid d$, thus
\begin{equation}\label{eq:T(a,2^l) for a even}
T_{n,k}(a;2^r)=\frac{1}{n}\sum_{\substack{1\leq d\leq n\\ 2^{b+1}\nmid d}}\frac{\mu(d)}{d^{k-1}}\Big(\frac{\gcd(2^r,d)n}{2^rd}+O(1)\Big)=T_{n,k}(0;2^r),
\end{equation}
here we used the fact that $\mu(d)=0$ for $2^{b+1}\mid d$.

Then our results follow by \eqref{eq:E(S(a,2^l))=}, \eqref{eq:T(a,2^l) for a=0(2)}, \eqref{eq:T(a,2^l) for a odd} and \eqref{eq:T(a,2^l) for a even}.
\end{proof}

Now we compute the expectation of $\overline{S}_{n,k}(a;p_1)$.
\begin{proposition}\label{prop:E(S(a,p_1))=}
Let $q\geq 1$ and $k\geq 2$ be integers. Then for any prime $p_1\geq 3$ and any $\varepsilon>0$, we have
$$
\mathbb{E}\big(\overline{S}_{n,k}(0;p_1)\big)=\frac{p_1^{k-1}-1}{p_1^{k}-1}\cdot\frac{1}{\zeta(k)}+O_{\mathcal{A},k,q,\varepsilon}(n^{-1/2+\varepsilon})
$$
and
$$
\mathbb{E}\big(\overline{S}_{n,k}(a;p_1)\big)=\frac{p_1^{k-1}}{p_1^{k}-1}\cdot\frac{1}{\zeta(k)}+O_{\mathcal{A},k,q,\varepsilon}(n^{-1/2+\varepsilon})\quad \text{if}~a\neq 0
$$
as $n\rightarrow\infty$.
\end{proposition}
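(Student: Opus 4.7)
The plan is to imitate the proof of Proposition \ref{prop:E(S(a,2^l))=} almost verbatim, adapting only the number-theoretic bookkeeping to the modulus $p_1$. Starting from the identity $\mathbb{E}(X_i)=\sum_{d\mid i}\mu(d)/d^{k-1}+O(i^{-1/2+\varepsilon}\tau(i))$ established in \eqref{eq:E(X_i)=(2)}, restricting to indices $i\equiv a\pmod{p_1}$ and invoking Lemma \ref{lem:the estimate about tau(n)} (part (i)) to absorb the error, I would write
$$\mathbb{E}\bigl(\overline{S}_{n,k}(a;p_1)\bigr)=T_{n,k}(a;p_1)+O(n^{-1/2+\varepsilon}),\qquad T_{n,k}(a;p_1):=\frac{1}{n}\sum_{\substack{1\leq i\leq n\\ i\equiv a(\bmod p_1)}}\sum_{d\mid i}\frac{\mu(d)}{d^{k-1}}.$$
Swapping the order of summation reduces everything to counting $i\leq n$ satisfying the pair of congruences $i\equiv a\pmod{p_1}$ and $i\equiv 0\pmod{d}$.

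Next I would analyse when this system has solutions. Since $p_1$ is prime, the Chinese remainder theorem gives solutions precisely when $\gcd(p_1,d)\mid a$, and when they exist, the number of solutions in $[1,n]$ equals $n/\mathrm{lcm}(p_1,d)+O(1)=\gcd(p_1,d)\,n/(p_1 d)+O(1)$. This splits into two cases. If $a=0$ the divisibility is automatic, and
$$T_{n,k}(0;p_1)=\frac{1}{p_1}\sum_{1\leq d\leq n}\frac{\gcd(p_1,d)\,\mu(d)}{d^{k}}+O(n^{-1+\varepsilon}).$$
If $a\not\equiv 0\pmod{p_1}$ then we need $\gcd(p_1,d)=1$, i.e.\ $p_1\nmid d$, which forces
$$T_{n,k}(a;p_1)=\frac{1}{p_1}\sum_{\substack{1\leq d\leq n\\ p_1\nmid d}}\frac{\mu(d)}{d^{k}}+O(n^{-1+\varepsilon}).$$
In both cases the $O(1)$ error from the congruence count, summed against $\sum_{d\leq n}|\mu(d)|/d^{k-1}$, contributes $O(n^{-1+\varepsilon})$ by \eqref{eq:sum_{d|i}u(d)/d^{k-1}<<}.

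Finally I would extend the Dirichlet series to $d=\infty$ using the tail bound \eqref{eq:sum_{d>D}1/d^{1-theta}<<} and evaluate the resulting sums as Euler products, since $\mu(d)/d^{k}$ and $\gcd(p_1,d)$ are multiplicative. For $a=0$ the local factor at a prime $p\neq p_1$ is $1-1/p^{k}$, while at $p=p_1$ it is $1-p_1/p_1^{k}=1-p_1^{1-k}$, giving
$$\sum_{d=1}^{\infty}\frac{\gcd(p_1,d)\,\mu(d)}{d^{k}}=\frac{1-p_1^{1-k}}{1-p_1^{-k}}\cdot\frac{1}{\zeta(k)}=\frac{p_1(p_1^{k-1}-1)}{p_1^{k}-1}\cdot\frac{1}{\zeta(k)},$$
and dividing by $p_1$ yields the stated constant $(p_1^{k-1}-1)/(p_1^{k}-1)\cdot 1/\zeta(k)$. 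For $a\neq 0$ the restriction $p_1\nmid d$ removes the local factor at $p_1$, producing $\prod_{p\neq p_1}(1-p^{-k})=\frac{p_1^{k}}{p_1^{k}-1}\cdot\frac{1}{\zeta(k)}$, and dividing by $p_1$ gives $p_1^{k-1}/(p_1^{k}-1)\cdot 1/\zeta(k)$.

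There is no real obstacle here: the entire argument is parallel to Proposition \ref{prop:E(S(a,2^l))=}, with the only substantive difference being that the Euler factor at $p_1$ is simpler (one modification at a single prime) because $p_1$ appears to the first power. The cleanest step to double-check is the $\gcd(p_1,d)\mid a$ solvability condition together with the fact that $\mu$ kills contributions from $d$ with $p_1^{2}\mid d$, so the case analysis collapses to the two regimes above and no further arithmetic progression refinement (as was needed for $a=2^b\cdot\text{odd}$ in the $2^r$ case) is required.
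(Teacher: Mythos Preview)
Your proposal is correct and follows essentially the same approach as the paper's proof, which simply points back to the $2^r$ case and says ``by a similar argument'' for each of the two subcases $a=0$ and $a\neq 0$. You have in fact written out more detail than the paper does, including the explicit Euler-product evaluations, and your observation that $\mu$ kills any $d$ with $p_1^2\mid d$ (so no analogue of the $2^b\parallel a$ refinement is needed) is exactly the simplification the paper is implicitly relying on.
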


\begin{proof}
Similar as \eqref{eq:E(S(a,2^l))=}, we have
\begin{equation}\label{eq:E(S(a,p_1))=}
\mathbb{E}\big(\overline{S}_{n,k}(a;p_1)\big)=T_{n,k}(a;p_1)+O(n^{-1/2+\varepsilon}),
\end{equation}
where
$$
T_{n,k}(a;p_1):=\frac{1}{n}\sum_{\substack{1\leq i\leq n\\i\equiv a(\bmod p_1)}}\sum_{d\mid i}\frac{\mu(d)}{d^{k-1}}.
$$
If $a=0$, then by a similar argument as that yields \eqref{eq:T(a,2^l) for a=0(2)}, we obtain
\begin{equation}\label{eq:T(a,p_1) for a=0}
T_{n,k}(0;p_1)=\frac{p_1^{k-1}-1}{p_1^{k}-1}\cdot\frac{1}{\zeta(k)}+O(n^{-1+\varepsilon}).
\end{equation}
If $a\neq 0$, then similar as \eqref{eq:T(a,2^l) for a odd}, there holds
\begin{equation}\label{eq:T(a,p_1) for a not 0}
T_{n,k}(a;p_1)=\frac{p_1^{k-1}}{p_1^{k}-1}\cdot\frac{1}{\zeta(k)}+O(n^{-1+\varepsilon}).
\end{equation}
Inserting \eqref{eq:T(a,p_1) for a=0} and \eqref{eq:T(a,p_1) for a not 0} into \eqref{eq:E(S(a,p_1))=} yields our required results.
\end{proof}

Now we compute the variance of $\overline{S}_{n,k}(a;m)$ for $m=2^r$ or $p_1$.
\begin{proposition}\label{prop:V(S(a,m))=}
Let $q,\ r\geq 1$ and $k\geq 2$ be integers. Then for any $\varepsilon>0$, we have
$$
\mathbb{V}\big(\overline{S}_{n,k}(a;m)\big)=O_{\mathcal{A},k,q,\varepsilon}(n^{-1/2+\varepsilon})
$$
as $n\rightarrow\infty$.
\end{proposition}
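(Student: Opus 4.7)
The plan is to mimic the argument of Proposition \ref{prop:V(s(n))=}, but restricting all indices to the arithmetic progression $a\pmod m$ and using the finer asymptotics for the restricted expectations supplied by Propositions \ref{prop:E(S(a,2^l))=} and \ref{prop:E(S(a,p_1))=}. Writing
\begin{align*}
\mathbb{V}\bigl(\overline{S}_{n,k}(a;m)\bigr)=\mathbb{E}\bigl((\overline{S}_{n,k}(a;m))^{2}\bigr)-\bigl(\mathbb{E}(\overline{S}_{n,k}(a;m))\bigr)^{2},
\end{align*}
and denoting by $c_{a,m}$ the explicit leading constant produced by Propositions \ref{prop:E(S(a,2^l))=} and \ref{prop:E(S(a,p_1))=} (so that $\mathbb{E}(\overline{S}_{n,k}(a;m))=c_{a,m}+O(n^{-1/2+\varepsilon})$), the subtracted square is $c_{a,m}^{2}+O(n^{-1/2+\varepsilon})$. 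Everything therefore reduces to showing $\mathbb{E}((\overline{S}_{n,k}(a;m))^{2})=c_{a,m}^{2}+O(n^{-1/2+\varepsilon})$.

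Expanding the square, I separate the diagonal and off-diagonal contributions:
\begin{align*}
\mathbb{E}\bigl((\overline{S}_{n,k}(a;m))^{2}\bigr)=\frac{2}{n^{2}}\sum_{\substack{1\leq i<j\leq n\\ i\equiv j\equiv a(\bmod m)}}\mathbb{E}(X_{i}X_{j})+\frac{1}{n^{2}}\sum_{\substack{1\leq i\leq n\\ i\equiv a(\bmod m)}}\mathbb{E}(X_{i}^{2}).
\end{align*}
The diagonal piece is trivially $O(1/n)$ since each $\mathbb{E}(X_{i}^{2})=\mathbb{E}(X_{i})\leq 1$. For the off-diagonal piece I feed in the two-point formula for $\mathbb{E}(X_{i}X_{j})$ derived in the proof of Proposition \ref{prop:V(s(n))=}: using the $\gcd$-criterion (Lemma \ref{lem:the critia of visibility}), M\"{o}bius inversion, and Lemma \ref{lem:the sum in l dimensions} applied independently to the first $i$ and to the next $j-i$ steps,
\begin{align*}
\mathbb{E}(X_{i}X_{j})=\sum_{d\mid i}\frac{\mu(d)}{d^{k-1}}\sum_{d\mid j}\frac{\mu(d)}{d^{k-1}}+O\!\left(i^{-1/2+\varepsilon}j^{\varepsilon}\tau(i)\right)+O\!\left(i^{\varepsilon}(j-i)^{-1/2+\varepsilon}\tau(j)\right).
\end{align*}

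The two error terms, when summed over the pairs $1\leq i<j\leq n$ in the progression, are controlled by Lemma \ref{lem:the estimate about tau(n)} exactly as in Proposition \ref{prop:V(s(n))=}, giving a total contribution of size $O(n^{3/2+\varepsilon})$ (the restriction $i\equiv j\equiv a\pmod m$ only strengthens the bound). The main-term double sum is handled by inserting the diagonal $i=j$ at cost $O(n^{1+\varepsilon})$ (using $\sum_{d\mid i}\mu(d)/d^{k-1}\ll\log i$) to obtain
\begin{align*}
\sum_{\substack{1\leq i<j\leq n\\ i\equiv j\equiv a(\bmod m)}}\sum_{d\mid i}\frac{\mu(d)}{d^{k-1}}\sum_{d\mid j}\frac{\mu(d)}{d^{k-1}}=\frac{1}{2}\Bigg(\sum_{\substack{1\leq i\leq n\\ i\equiv a(\bmod m)}}\sum_{d\mid i}\frac{\mu(d)}{d^{k-1}}\Bigg)^{2}+O(n^{1+\varepsilon}).
\end{align*}
The inner sum here is precisely $n\,T_{n,k}(a;m)$ from the proofs of Propositions \ref{prop:E(S(a,2^l))=} and \ref{prop:E(S(a,p_1))=}, and those proofs show $T_{n,k}(a;m)=c_{a,m}+O(n^{-1+\varepsilon})$, hence the bracketed quantity equals $nc_{a,m}+O(n^{\varepsilon})$. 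Squaring and dividing by $n^{2}$ yields $c_{a,m}^{2}+O(n^{-1/2+\varepsilon})$, which together with the error analysis completes the estimate of $\mathbb{E}((\overline{S}_{n,k}(a;m))^{2})$ and hence of the variance.

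The main obstacle, mirroring the situation in Proposition \ref{prop:V(s(n))=}, is the second error term $O(i^{\varepsilon}(j-i)^{-1/2+\varepsilon}\tau(j))$: the singularity at $j=i$ is what forces the loss of a factor $\sqrt{n}$ against the trivial bound, and ensures only the exponent $-1/2+\varepsilon$ (not better) in the variance. All other ingredients, namely the two-point estimate from Lemma \ref{lem:the sum in l dimensions}, the divisor bounds from Lemma \ref{lem:the estimate about tau(n)}, and the first-moment asymptotic for $T_{n,k}(a;m)$ already established in this section, plug in without further work.
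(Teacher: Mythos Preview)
Your proposal is correct and follows essentially the same approach as the paper: both decompose the variance as $\mathbb{E}((\overline{S}_{n,k}(a;m))^{2})-(\mathbb{E}(\overline{S}_{n,k}(a;m)))^{2}$, expand the square into diagonal and off-diagonal pieces, invoke the two-point estimate for $\mathbb{E}(X_iX_j)$ from Proposition~\ref{prop:V(s(n))=} (with errors handled via Lemma~\ref{lem:the estimate about tau(n)}), and reduce the main term to the square of $\sum_{i\equiv a(m)}\sum_{d\mid i}\mu(d)/d^{k-1}=nT_{n,k}(a;m)$. The only cosmetic difference is that the paper compares both sides to $(T_{n,k}(a;m))^{2}$ directly, whereas you first pass to the limiting constant $c_{a,m}$; either route yields the same cancellation.
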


\begin{proof}
 Similar as in the proof of Proposition \ref{prop:V(s(n))=}, we only need to handle $\big(\mathbb{E}\big(\overline{S}_{n,k}(a;m)\big)\big)^2$ and $\mathbb{E}\big(\big(\overline{S}_{n,k}(a;m)\big)^2\big)$.

 By Proposition \ref{prop:E(S(a,2^l))=} and Proposition \ref{prop:E(S(a,p_1))=}, we write
 $$
 \big(\mathbb{E}\big(\overline{S}_{n,k}(a;m)\big)\big)^2=\big(T_{n,k}(a;m)\big)^2+O(n^{-1/2+\varepsilon}).
 $$
 For $\mathbb{E}\big(\big(\overline{S}_{n,k}(a;m)\big)^2\big)$, notice that
$$
\mathbb{E}\big(\big(\overline{S}_{n,k}(a;m)\big)^2\big)=\frac{2}{n^2}\sum_{\substack{1\leq i<j\leq n\\i,j\equiv a(\bmod m)}}\mathbb{E}({X}_i{X}_j)+\frac{1}{n^2}\sum_{\substack{1\leq i\leq n\\i\equiv a(\bmod m)}}\mathbb{E}({{X}_i}^2).
$$
By Propositions \ref{prop:E(S(a,2^l))=} and \ref{prop:E(S(a,p_1))=}, the second term on the right hand side of the above formula is $\ll 1/n$, and by a similar argument as that yields \eqref{eq:E(X_iX_j)=(1)}, we have
$$
\begin{aligned}
\frac{2}{n^2}\sum_{\substack{1\leq i<j\leq n\\i,j\equiv a(\bmod m)}}\mathbb{E}({X}_i{X}_j)&=\bigg(\frac{1}{n}\sum_{\substack{1\leq i\leq n\\i\equiv a(\bmod m)}}\sum_{d\mid i}\frac{\mu(d)}{d^{k-1}}\bigg)^2+O(n^{-1/2+\varepsilon})\\
&=\big(T_{n,k}(a,m)\big)^2+O(n^{-1/2+\varepsilon}),
\end{aligned}
$$
which gives
$$
\mathbb{E}\big(\big(\overline{S}_{n,k}(a;m)\big)^2\big)=\big(T_{n,k}(a,m)\big)^2+O(n^{-1/2+\varepsilon}).
$$
This completes our proof.

 Now Theorem \ref{thm:delta(a,m)=} follows from Propositions \ref{prop:E(S(a,2^l))=}, \ref{prop:E(S(a,p_1))=} and \ref{prop:V(S(a,m))=}, and Lemma \ref{lem:lim S_n=u}.
\bigskip
\end{proof}

\section{Proof of Theorem \ref{thm:R_n=}}
Similarly, we only consider the expectation and variance of $\overline{R}_{n,k}$.
If not specified, the implied constants in the big-$O$ or $\ll$ depend at most on $\mathcal{A},\ k,\ q$ and any given $\varepsilon>0$.

\begin{proposition}\label{prop:E(R_n)=}
Let $q\geq 1$ and $k\geq 2$ be integers. Then for any $\varepsilon>0$, we have
$$
\mathbb{E}(\overline{R}_{n,k})=\prod_p\Big(1-\frac{2}{p^k}\Big)+O_{\mathcal{A},k,q,\varepsilon}(n^{-1/2+\varepsilon})
$$
as $n\rightarrow\infty$, where $p$ runs over all primes.
\end{proposition}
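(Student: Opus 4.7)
The plan is to mirror the structure of Proposition 3.1, but with the complication that we now need simultaneous visibility of $\mathbf{p}_i$ and $\mathbf{p}_{i+1}$. Since $\mathbf{p}_{i+1} = \mathbf{p}_i + \mathbf{w}(\boldsymbol{\alpha}'_{i+1})$ differs from $\mathbf{p}_i$ by a single unit vector $e_a$ (with probability $\alpha_{T,a}$, where $\boldsymbol{\alpha}'_{i+1} = \boldsymbol{\alpha}_T$), I expect the joint visibility condition to collapse nicely: if $d_1 \mid \gcd(\mathbf{p}_i)$ and $d_2 \mid \gcd(\mathbf{p}_{i+1})$, then $d_2$ divides both $s^{(a)}$ and $s^{(a)}+1$ for some coordinate $a$ whenever $\gcd(d_1,d_2)>1$, forcing $\gcd(d_1,d_2)=1$ to be the only nontrivial regime. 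This coprimality is precisely what makes the Chinese Remainder Theorem applicable and puts the problem into the form covered by Lemma \ref{lem:the sum in l dimensions}.

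First I would write
\[
\mathbb{E}(X_i X_{i+1}) = \sum_{a=1}^{k}\alpha_{T,a}\,\mathbb{P}\!\left(\mathbf{p}_i\text{ and }\mathbf{p}_i+e_a\text{ are both visible}\right),
\]
where $T=T(i)$ indexes which $\boldsymbol{\alpha}_t$ is used at step $i+1$. Using Lemma \ref{lem:the critia of visibility} and M\"obius inversion twice, this expands as a sum over $d_1\mid \gcd(\mathbf{p}_i)$ and $d_2\mid \gcd(\mathbf{p}_i+e_a)$ with $\mu(d_1)\mu(d_2)$. The divisibility constraints force $d_1\mid i$, $d_2\mid i+1$, and $\gcd(d_1,d_2)=1$; modulo $d_1d_2$ the coordinates $s^{(b)}$ for $b\neq a$ must be $0$ while $s^{(a)}$ takes a prescribed residue determined by CRT. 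Plugging into Lemma \ref{lem:the sum in l dimensions} with $d=d_1d_2$ yields
\[
\mathbb{P}(\text{constraints}) = \frac{1}{(d_1d_2)^{k-1}} + O\!\left(\frac{\log i}{\sqrt{i}}\right).
\]
Because $\sum_{a=1}^{k}\alpha_{T,a}=1$, the dependence on $T$ and $a$ disappears from the main term, giving
\[
\mathbb{E}(X_iX_{i+1}) = \sum_{\substack{d_1\mid i,\,d_2\mid i+1\\ \gcd(d_1,d_2)=1}}\frac{\mu(d_1)\mu(d_2)}{(d_1d_2)^{k-1}} + O\!\left(i^{-1/2+\varepsilon}\tau(i)\tau(i+1)\right),
\]
where the error bound comes from the trivial count of pairs $(d_1,d_2)$.

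Averaging over $1\le i\le n$ and invoking Lemma \ref{lem:sum_{d_1d_2<n}=} on the main term produces $\prod_p(1-2/p^k)$, while Lemma \ref{lem:the estimate about tau(n)} controls the divisor-function error as $O(n^{-1/2+\varepsilon})$.

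The main obstacle is the careful bookkeeping of the joint M\"obius inversion: establishing that the visibility of two consecutive lattice points really reduces to counting residues modulo $d_1d_2$ under the coprimality hypothesis, and verifying that the parameter set $(g_1,\dots,g_{k-1})$ from Lemma \ref{lem:the sum in l dimensions} can be chosen consistently so that the main term is independent of which coordinate direction $e_a$ was taken. Once this reduction is in place, Lemma \ref{lem:sum_{d_1d_2<n}=} finishes the computation cleanly, and the proof parallels Proposition \ref{prop:E(s(n))=} with only the Euler product replaced.
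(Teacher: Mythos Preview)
Your proposal is correct and follows essentially the same route as the paper: condition on the direction $e_a$ of the $(i{+}1)$th step, apply M\"obius inversion twice, observe that $d_1\mid i$, $d_2\mid i{+}1$ force $\gcd(d_1,d_2)=1$, use the Chinese Remainder Theorem to reduce to congruences modulo $d_1d_2$, invoke Lemma~\ref{lem:the sum in l dimensions}, and then sum using Lemma~\ref{lem:sum_{d_1d_2<n}=}. The only cosmetic difference is that the paper first truncates to $d_1d_2\le i$ before applying Lemma~\ref{lem:sum_{d_1d_2<n}=}, but the discarded tail is trivially $O(i^{-1+\varepsilon})$ and your error term already absorbs it.
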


\begin{proof}
By the definition of $\overline{R}_{n,k}$, we have
\begin{align}\label{eq:E(R_n)=}
\mathbb{E}(\overline{R}_{n,k})=\frac{1}{n}\sum_{1\leq i\leq n}\mathbb{E}({X}_i{X}_{i+1}).
\end{align}
Then by \eqref{eq:E(X_iX_j)=}, we obtain
$$
\mathbb{E}({X}_i{X}_{i+1})=\sum_{\substack{0\leq s_{t,1},\cdots,s_{t,k}\leq i_t\\s_{t,1}+\cdots+s_{t,k}=i_t\\ \forall 1\leq t\leq q\\ \gcd(s^{(1)},\cdots,s^{(k)})=1}}\prod_{t=1}^q P_{i_t,{\bf s}_t,{\boldsymbol{\alpha}}_t}\sum_{\substack{0\leq r_{t,1},\cdots,r_{t,k}\leq j_t\\r_{t,1}+\cdots+r_{t,k}=j_t\\ \forall 1\leq t\leq q\\ \gcd(s^{(1)}+r^{(1)},\cdots,s^{(k)}+r^{(k)})=1}}\prod_{t=1}^q P_{j_t,{\bf r}_t,{\boldsymbol{\alpha}}_t},
$$
where $0\leq j_1,\cdots,j_q\leq 1$ and $j_1+\cdots+j_q=1$. Without loss of generality, we suppose $j_1=1$, then we obtain
\begin{align}\label{eq:E(X_iX_i+1)=(1)}
\mathbb{E}({X}_i{X}_{i+1})=A_1+\cdots+A_k,
\end{align}
where
$$
A_a=A_a(\mathcal{A},k,i):=\alpha_{1,a}\sum_{\substack{0\leq s_{t,1},\cdots,s_{t,k}\leq i_t\\s_{t,1}+\cdots+s_{t,k}=i_t\\ \forall 1\leq t\leq q\\ \gcd(s^{(1)},\cdots,s^{(k)})=1\\ \gcd(s^{(1)},\cdots,s^{(a)}+1,\cdots,s^{(k)})=1}}\prod_{t=1}^q P_{i_t,{\bf s}_t,{\boldsymbol{\alpha}}_t}
$$
for  $1\leq a\leq k$.

We only consider $A_1(\mathcal{A},k,i)$,  and other cases are similar.
Using \eqref{eq:mobius formula}, we have
$$
A_1=\alpha_{1,1}\sum_{\substack{0\leq s_{t,1},\cdots,s_{t,k}\leq i_t\\s_{t,1}+\cdots+s_{t,k}=i_t\\ \forall 1\leq t\leq q}}\prod_{t=1}^q P_{i_t,{\bf s}_t,{\boldsymbol{\alpha}}_t}\sum_{d_1\mid\gcd(s^{(1)},\cdots,s^{(k)})}\mu(d_1)\sum_{d_2\mid\gcd(s^{(1)}+1,\cdots,s^{(k)})}\mu(d_2).
$$
By the definition of $\gcd(\ast)$, we have
$$
A_1=\alpha_{1,1}\sum_{\substack{0\leq s_{t,1},\cdots,s_{t,k}\leq i_t\\s_{t,1}+\cdots+s_{t,k}=i_t\\ \forall 1\leq t\leq q}}\prod_{t=1}^q P_{i_t,{\bf s}_t,{\boldsymbol{\alpha}}_t}\sum_{\substack{d_1\mid s^{(a)},\\ \forall 1\leq a\leq k}}\mu(d_1)\sum_{\substack{d_2\mid s^{(1)}+1,\\d_2\mid s^{(a)}, \forall 2\leq a\leq k}}\mu(d_2).
$$
We change the order of the summations, then
$$
A_1=\alpha_{1,1}\sum_{1\leq d_1,d_2\leq i}\mu(d_1)\mu(d_2)\sum_{\substack{0\leq s_{t,1},\cdots,s_{t,k}\leq i_t\\s_{t,1}+\cdots+s_{t,k}=i_t\\ \forall 1\leq t\leq q\\ d_1\mid s^{(1)},d_2\mid s^{(1)}+1\\ d_1\mid s^{(a)},d_2\mid s^{(a)},\forall 2\leq a\leq k}}\prod_{t=1}^q P_{i_t,{\bf s}_t,{\boldsymbol{\alpha}}_t}.
$$
Further, we have
$$
A_1=\alpha_{1,1}\sum_{\substack{1\leq d_1,d_2\leq i\\d_1\mid i,d_2\mid i+1}}\mu(d_1)\mu(d_2)\sum_{\substack{0\leq s_{t,1},\cdots,s_{t,k}\leq i_t\\s_{t,1}+\cdots+s_{t,k}=i_t\\ \forall 1\leq t\leq q\\ d_1\mid s^{(1)},d_2\mid s^{(1)}+1\\ d_1\mid s^{(a)},d_2\mid s^{(a)},\forall 2\leq a\leq k-1}}\prod_{t=1}^q P_{i_t,{\bf s}_t,{\boldsymbol{\alpha}}_t}.
$$

Observe that the $k-1$ systems of congruence equations
$$
\begin{array}{cc}
\begin{cases}
s^{(1)}\equiv 0(\bmod\ d_1)\\
s^{(1)}\equiv -1(\bmod\ d_2)
\end{cases}, &
\begin{cases}
s^{(a)}\equiv 0(\bmod\ d_1)\\
s^{(a)}\equiv 0(\bmod\ d_2)
\end{cases},
\end{array}
a=2,\cdots,k-1
$$
have solutions if and only if $\gcd(d_1,d_2)\mid 1$, this implies $\gcd(d_1,d_2)=1$. Applying the Chinese Reminder Theorem, we obtain
$$
A_1=\alpha_{1,1}\sum_{\substack{1\leq d_1,d_2\leq i\\d_1\mid i,d_2\mid i+1\\ \gcd(d_1,d_2)=1}}\mu(d_1)\mu(d_2)\sum_{\substack{0\leq s_{t,1},\cdots,s_{t,k}\leq i_t\\s_{t,1}+\cdots+s_{t,k}=i_t\\ \forall 1\leq t\leq q\\s^{(a)}\equiv g_a(\bmod d_1d_2)\\ \forall 1\leq a\leq k-1}}\prod_{t=1}^q P_{i_t,{\bf s}_t,{\boldsymbol{\alpha}}_t},
$$
where $g_a\in\mathbb{Z}$ for any $1\leq a\leq k-1$.
Using Lemma \ref{lem:the sum in l dimensions}, we have
$$
\begin{aligned}
A_1&=\alpha_{1,1}\sum_{\substack{1\leq d_1,d_2\leq i\\d_1\mid i,d_2\mid i+1\\ \gcd(d_1,d_2)=1}}\mu(d_1)\mu(d_2)\Big(\frac{1}{(d_1d_2)^{k-1}}+O(i^{-1/2+\varepsilon})\Big)\\
&=\alpha_{1,1}\sum_{\substack{1\leq d_1,d_2\leq i\\d_1\mid i,d_2\mid i+1\\ \gcd(d_1,d_2)=1}}\frac{\mu(d_1)\mu(d_2)}{(d_1d_2)^{k-1}}+O\big(i^{-1/2+\varepsilon}\tau(i)\tau(i+1)\big).
\end{aligned}
$$
Dividing the above sum into two parts according to $d_1d_2\leq i$ or not, we obtain
$$
A_1=\alpha_{1,1}\sum_{\substack{1\leq d_1d_2\leq i\\d_1\mid i,d_2\mid i+1\\ \gcd(d_1,d_2)=1}}\frac{\mu(d_1)\mu(d_2)}{(d_1d_2)^{k-1}}+O\big(i^{-1/2+\varepsilon}\tau(i)\tau(i+1)\big).
$$
Combining this with \eqref{eq:E(X_iX_i+1)=(1)}, we have
\begin{align}\label{eq:E(X_iX_i+1)=(2)}
\mathbb{E}({X}_i{X}_{i+1})=\sum_{\substack{1\leq d_1d_2\leq i\\d_1\mid i,d_2\mid i+1\\ \gcd(d_1,d_2)=1}}\frac{\mu(d_1)\mu(d_2)}{(d_1d_2)^{k-1}}+O\big(i^{-1/2+\varepsilon}\tau(i)\tau(i+1)\big).
\end{align}
It follows by Lemma \ref{lem:sum_{d_1d_2<n}=} and Lemma \ref{lem:the estimate about tau(n)} that
\begin{align}\label{eq:sum_{i leq n}E(X_iX_i+1)=}
\sum_{1\leq i\leq n}\mathbb{E}({X}_i{X}_{i+1})=n\prod_p\big(1-\frac{2}{p^k}\big)+O(n^{1/2+\varepsilon}).
\end{align}
Combining this with \eqref{eq:E(R_n)=} yields our required result.
\end{proof}

Now we are ready to deal with $\mathbb{V}(\overline{R}_{n,k})$.

\begin{proposition}\label{prop:V(R_n)=}
Let $q\geq 1$ and $k\geq 2$ be integers. Then for any $\varepsilon>0$, we have
$$
\mathbb{V}(\overline{R}_{n,k})=O_{\mathcal{A}, k,q,\varepsilon}(n^{-1/2+\varepsilon})
$$
as $n\rightarrow\infty$.
\end{proposition}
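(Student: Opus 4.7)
The plan is to follow exactly the template of Proposition \ref{prop:V(s(n))=}. Write
$$\mathbb{V}(\overline{R}_{n,k})=\mathbb{E}\big((\overline{R}_{n,k})^2\big)-\big(\mathbb{E}(\overline{R}_{n,k})\big)^2,$$
so by Proposition \ref{prop:E(R_n)=} we have $\big(\mathbb{E}(\overline{R}_{n,k})\big)^2=\prod_p\big(1-\tfrac{2}{p^k}\big)^2+O(n^{-1/2+\varepsilon})$, and it suffices to obtain the matching estimate for $\mathbb{E}\big((\overline{R}_{n,k})^2\big)$. Expanding the square gives
$$n^2\,\mathbb{E}\big((\overline{R}_{n,k})^2\big)=\sum_{1\le i\le n}\mathbb{E}(X_i^2X_{i+1}^2)+2\sum_{1\le i<j\le n}\mathbb{E}(X_iX_{i+1}X_jX_{j+1}).$$
Since $X_i\in\{0,1\}$, the diagonal contributes $O(n)$, and the near-diagonal $j=i+1$ collapses $X_iX_{i+1}^2X_{i+2}=X_iX_{i+1}X_{i+2}\le 1$, which also contributes $O(n)$. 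Thus the task reduces to bounding the off-diagonal sum for $j\ge i+2$.

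For $j\ge i+2$, the four random variables $X_i,X_{i+1},X_j,X_{j+1}$ are determined by the random walk split into three time-disjoint blocks $[1,i]$, step $i+1$, and $[i+2,j+1]$. Applying Lemma \ref{lem:the critia of visibility} and the M\"obius formula \eqref{eq:mobius formula} to each of the four visibility events yields four Dirichlet-type sums over divisors $d_1\mid i$, $d_2\mid i+1$, $d_3\mid j$, $d_4\mid j+1$, coupled through the congruence classes of the partial sums $s^{(a)}$. Mimicking the derivation leading to \eqref{eq:E(X_iX_i+1)=(2)}—now applying Lemma \ref{lem:the sum in l dimensions} separately to the increments over $[1,i+1]$ and $[i+2,j+1]$, which are independent by the definition \eqref{the definition of random walk} of the walk, and noting that one of the two blocks has length $\ge (j+1)/2q\gg n/q$—one obtains
$$\mathbb{E}(X_iX_{i+1}X_jX_{j+1})=\Bigg(\sum_{\substack{1\le d_1d_2\le i\\ d_1\mid i,\,d_2\mid i+1\\ \gcd(d_1,d_2)=1}}\frac{\mu(d_1)\mu(d_2)}{(d_1d_2)^{k-1}}\Bigg)\Bigg(\sum_{\substack{1\le d_3d_4\le j-i\\ d_3\mid j,\,d_4\mid j+1\\ \gcd(d_3,d_4)=1}}\frac{\mu(d_3)\mu(d_4)}{(d_3d_4)^{k-1}}\Bigg)+\mathscr{E}(i,j),$$
where $\mathscr{E}(i,j)\ll i^{-1/2+\varepsilon}\tau(i)\tau(i+1)+(j-i)^{-1/2+\varepsilon}\tau(j)\tau(j+1)$.

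Summing over $1\le i<j\le n$, the double sum of the main term becomes (after filling in the diagonal $i=j$ at cost $O(n^{1+\varepsilon})$ from \eqref{eq:sum_{i leq n}E(X_iX_i+1)=})
$$\frac{1}{2}\Bigg(\sum_{1\le i\le n}\sum_{\substack{1\le d_1d_2\le i\\ d_1\mid i,\,d_2\mid i+1\\ \gcd(d_1,d_2)=1}}\frac{\mu(d_1)\mu(d_2)}{(d_1d_2)^{k-1}}\Bigg)^2+O(n^{1+\varepsilon}),$$
and by Lemma \ref{lem:sum_{d_1d_2<n}=} this equals $\tfrac{n^2}{2}\prod_p\big(1-\tfrac{2}{p^k}\big)^2+O(n^{1+\varepsilon})$. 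The contribution from the error term $\mathscr{E}(i,j)$ is $O(n^{3/2+\varepsilon})$ by Lemma \ref{lem:the estimate about tau(n)}. Collecting these estimates yields $\mathbb{E}\big((\overline{R}_{n,k})^2\big)=\prod_p\big(1-\tfrac{2}{p^k}\big)^2+O(n^{-1/2+\varepsilon})$, which together with the computation of $\big(\mathbb{E}(\overline{R}_{n,k})\big)^2$ gives the claimed bound.

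The main obstacle is the factorization step for $\mathbb{E}(X_iX_{i+1}X_jX_{j+1})$: one must apply Lemma \ref{lem:the sum in l dimensions} to the middle block of length $j-i-1$ while correctly carrying the residue classes imposed by $d_1,d_2$ on $\mathbf{p}_{i+1}$ into the congruence conditions on the later partial sums. The clean way to do this is to condition on the value of $\mathbf{p}_{i+1}$ modulo $d_3d_4$, use independence of the two halves of the walk, and observe that the resulting inner sum over the later increments is again of the type covered by Lemma \ref{lem:the sum in l dimensions} (applied with shifted targets $g_a$), so the bound $O((j-i)^{-1/2+\varepsilon})$ survives uniformly. Once that is in place, the remainder of the argument is a direct translation of Proposition \ref{prop:V(s(n))=}.
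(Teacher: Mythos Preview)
Your proposal is correct and follows essentially the same route as the paper: expand $\mathbb{E}((\overline{R}_{n,k})^2)$, factor $\mathbb{E}(X_iX_{i+1}X_jX_{j+1})$ into two Dirichlet-type sums via two applications of Lemma~\ref{lem:the sum in l dimensions} (one to each block of the walk), then symmetrize and invoke Lemma~\ref{lem:sum_{d_1d_2<n}=}. Two small remarks: the parenthetical claim that one of the two blocks has length $\gg n/q$ is false (take $i$ and $j$ both small), but this is irrelevant since your stated error $\mathscr{E}(i,j)$ is already the correct output of Lemma~\ref{lem:the sum in l dimensions}; and your truncation $d_3d_4\le j-i$ in the second factor differs from the paper's $d_3d_4\le j$, so before symmetrizing into a perfect square you must replace one by the other, which costs an additional $O(n^{1+\varepsilon})$ and is harmless.
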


\begin{proof}
Note that
\begin{align}\label{eq:V(R(n))=E(R(n))^2-(E(R(n)))^2}
\mathbb{V}(\overline{R}_{n,k})=\mathbb{E}\big((\overline{R}_{n,k})^2\big)-\big(\mathbb{E}(\overline{R}_{n,k})\big)^2.
\end{align}
It follows by Proposition \ref{prop:E(R_n)=} that
\begin{align}\label{eq:(E(R(n)))^2=}
\big(\mathbb{E}(\overline{R}_{n,k})\big)^2&=\Big(\prod_p\big(1-\frac{2}{p^k}\big)+O(n^{-1/2+\varepsilon})\Big)^2\\
&=\Big(\prod_p\big(1-\frac{2}{p^k}\big)\Big)^2+O(n^{-1/2+\varepsilon})\nonumber.
\end{align}
We expand the square and obtain
\begin{align}\label{eq:E((R(n))^2)=sum_E(X_iX_i+1X_jX_j+1)+sum_E(X_i^2)}
\mathbb{E}\big((\overline{R}_{n,k})^2\big)=\frac{2}{n^2}\sum_{1\leq i<j\leq n}\mathbb{E}({X}_i{X}_{i+1}{X}_j{X}_{j+1})+\frac{1}{n^2}\sum_{1\leq i\leq n}\mathbb{E}\big(({{X}_i{X}_{i+1}})^2\big).
\end{align}
By \eqref{eq:sum_{i leq n}E(X_iX_i+1)=}, we have
\begin{align}\label{eq:sum E(X_iX_i+1)^2=}
\sum_{1\leq i\leq n}\mathbb{E}\big(({{X}_i{X}_{i+1}})^2\big)=\sum_{1\leq i\leq n}\mathbb{E}({X}_i{X}_{i+1})=O(n).
\end{align}
To deal with $\sum\limits_{1\leq i<j\leq n}\mathbb{E}({X}_i{X}_{i+1}{X}_j{X}_{j+1})$, by the definition of $X_i$, we have
$$
\mathbb{E}({X}_i{X}_{i+1}{X}_j{X}_{j+1})=\mathbb{P}({\bf p}_i,{\bf p}_{i+1},{\bf p}_{j},{\bf p}_{j+1}\ {\rm{are\ all\ visible}}).
$$
Using a similar argument as that yield \eqref{eq:E(X_iX_j)=} and \eqref{eq:E(X_iX_i+1)=(1)}, we have
\begin{align}\label{eq:E(X_iX_i+1X_jX_j+1)=AB}
\mathbb{E}({X}_i{X}_{i+1}{X}_j{X}_{j+1})=A(\mathcal{A},k,i) B(\mathcal{A},k,i,j),
\end{align}
where
$$
A(\mathcal{A},k,i)=A_1+\cdots+A_k
$$
and
$$
B(\mathcal{A},k,i,j)=B_1+\cdots+B_k
$$
with
$$
B_a=B_a(\mathcal{A},k,i,j):=\alpha_{1,a}\sum_{\substack{0\leq r_{t,1},\cdots,r_{t,k}\leq j_t\\r_{t,1}+\cdots+r_{t,k}=j_t\\ \forall 1\leq t\leq q\\ \gcd(s^{(1)}+r^{(1)},\cdots,s^{(k)}+r^{(k)})=1\\ \gcd(s^{(1)}+r^{(1)},\cdots,s^{(a)}+r^{(a)}+1,\cdots,s^{(k)}+r^{(k)})=1}}\prod_{t=1}^q P_{j_t,{\bf r}_t,{\boldsymbol{\alpha}}_t}
$$
for $1\leq a\leq k$, where $0\leq j_1,\cdots,j_t\leq j-i$ and $ j_1+\cdots+j_t=j-i$.
By a similar argument as that yields \eqref{eq:E(X_iX_i+1)=(2)}, we obtain
$$
A(\mathcal{A},k,i)=\sum_{\substack{1\leq d_1d_2\leq i\\d_1\mid i,d_2\mid i+1\\ \gcd(d_1,d_2)=1}}\frac{\mu(d_1)\mu(d_2)}{(d_1d_2)^{k-1}}+O\big(i^{-1/2+\varepsilon}\tau(i)\tau(i+1)\big)
$$
and
$$
B(\mathcal{A},k,i,j)=\sum_{\substack{1\leq d_1d_2\leq j\\d_1\mid j,d_2\mid j+1\\ \gcd(d_1,d_2)=1}}\frac{\mu(d_1)\mu(d_2)}{(d_1d_2)^{k-1}}+O\big((j-i)^{-1/2+\varepsilon}\tau(j)\tau(j+1)\big).
$$
Combining this with \eqref{eq:E(X_iX_i+1X_jX_j+1)=AB}, we expand the product and use the estimate
\begin{align}\label{eq:formula(1)}
\sum_{\substack{1\leq d_1d_2\leq i\\d_1\mid i,d_2\mid i+1\\ \gcd(d_1,d_2)=1}}\frac{\mu(d_1)\mu(d_2)}{(d_1d_2)^{k-1}}\ll\sum_{1\leq d_1d_2\leq i}\frac{1}{d_1d_2}\ll\sum_{1\leq m\leq i}\frac{\tau(m)}{m}\ll\log^2 i,
\end{align}
then

\begin{align}\label{eq:E}
\mathbb{E}({X}_i{X}_{i+1}{X}_j{X}_{j+1})&=\sum_{\substack{1\leq d_1d_2\leq i\\d_1\mid i,d_2\mid i+1\\ \gcd(d_1,d_2)=1}}\frac{\mu(d_1)\mu(d_2)}{(d_1d_2)^{k-1}}\sum_{\substack{1\leq d_1d_2\leq j\\d_1\mid j,d_2\mid j+1\\ \gcd(d_1,d_2)=1}}\frac{\mu(d_1)\mu(d_2)}{(d_1d_2)^{k-1}}\\
\nonumber&+O\big(i^{-1/2+\varepsilon}j^\varepsilon\tau(i)\tau(i+1)\big)+O\big(i^\varepsilon(j-i)^{-1/2+\varepsilon}\tau(j)\tau(j+1)\big).
\end{align}
Applying Lemma \ref{lem:the estimate about tau(n)} to the big-$O$ term yields
\begin{align}\label{eq:sum{i,j}E(X_iX_i+1X_jX_j+1)=(1)}
\sum_{1\leq i<j\leq n}\mathbb{E}({X}_i{X}_{i+1}{X}_j{X}_{j+1})=Y_{n,k}+O(n^{{3/2}+\varepsilon}),
\end{align}
where
$$
Y_{n,k}:=\sum_{1\leq i<j\leq n}\sum_{\substack{1\leq d_1d_2\leq i\\d_1\mid i,d_2\mid i+1\\ \gcd(d_1,d_2)=1}}\frac{\mu(d_1)\mu(d_2)}{(d_1d_2)^{k-1}}\sum_{\substack{1\leq d_1d_2\leq j\\d_1\mid j,d_2\mid j+1\\ \gcd(d_1,d_2)=1}}\frac{\mu(d_1)\mu(d_2)}{(d_1d_2)^{k-1}}.
$$

For $Y_{n,k}$, adding diagonal terms contributes an  error term bounded by $\ll n^{1+\varepsilon}$ by \eqref{eq:formula(1)}, we obtain
$$
Y_{n,k}=\frac{1}{2}\Big(\sum_{1\leq i\leq n}\sum_{\substack{1\leq d_1d_2\leq i\\d_1\mid i,d_2\mid i+1\\ \gcd(d_1,d_2)=1}}\frac{\mu(d_1)\mu(d_2)}{(d_1d_2)^{k-1}}\Big)^2+O(n^{1+\varepsilon}).
$$
Then by Lemma \ref{lem:sum_{d_1d_2<n}=}, we obtain
$$
Y_{n,k}=\frac{n^2}{2}\Big(\prod_p\big(1-\frac{2}{p^k}\big)\Big)^2+O(n^{1+\varepsilon}).
$$
Combining this with \eqref{eq:sum{i,j}E(X_iX_i+1X_jX_j+1)=(1)} gives
\begin{align}\label{eq:sum_{i,j}E(X_iX_i+1X_jX_j+1)=(2)}
\sum_{1\leq i<j\leq n}\mathbb{E}({X}_i{X}_{i+1}{X}_j{X}_{j+1})=\frac{n^2}{2}\Big(\prod_p\big(1-\frac{2}{p^k}\big)\Big)^2+O(n^{{3/2}+\varepsilon}).
\end{align}
Inserting \eqref{eq:sum E(X_iX_i+1)^2=} and\eqref{eq:sum_{i,j}E(X_iX_i+1X_jX_j+1)=(2)} into\eqref{eq:E((R(n))^2)=sum_E(X_iX_i+1X_jX_j+1)+sum_E(X_i^2)}, we obtain
\begin{align}
\mathbb{E}\big((\overline{R}_{n,k})^2\big)=\Big(\prod_p\big(1-\frac{2}{p^k}\big)\Big)^2+O(n^{{-1/2}+\varepsilon}).
\end{align}
This together with \eqref{eq:(E(R(n)))^2=} and \eqref{eq:V(R(n))=E(R(n))^2-(E(R(n)))^2} gives our desired result.

Now Theorem \ref{thm:R_n=} follows from Propositions \ref{prop:E(R_n)=} and \ref{prop:V(R_n)=}, and Lemma \ref{lem:lim S_n=u}.
\end{proof}

\section{Proof of Theorem \ref{thm:gamma_k{a;m}=}}
In this section, the implied constants in the big-$O$ or $\ll$ depend at most on $\mathcal{A},\ k,\ q$ and any given $\varepsilon>0$.

We first consider the expectation of $\overline{R}_{n,k}(a;m)$.
\begin{proposition}\label{prop:E(R(a,2^l))=}
Let $q,\ r\geq 1$ and $k\geq 2$ be integers. Then for any $\varepsilon>0$, we have
$$
\mathbb{E}\big(\overline{R}_{n,k}(a;2^r)\big)=\frac{1}{2^r}\prod_{p}\bigg(1-\frac{2}{p^k}\bigg)+O_{\mathcal{A},k,q,\varepsilon}(n^{-1/2+\varepsilon})
$$
as $n\rightarrow\infty$, where $p$ runs over all primes.
\end{proposition}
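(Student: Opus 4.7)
The plan is to parallel Proposition \ref{prop:E(R_n)=}, restricting the outer summation to $i\equiv a\pmod{2^r}$. From \eqref{eq:E(X_iX_i+1)=(2)} we have
\[
\mathbb{E}(X_i X_{i+1})=\sum_{\substack{1\leq d_1d_2\leq i\\d_1\mid i,\,d_2\mid i+1\\ \gcd(d_1,d_2)=1}}\frac{\mu(d_1)\mu(d_2)}{(d_1d_2)^{k-1}}+O\big(i^{-1/2+\varepsilon}\tau(i)\tau(i+1)\big).
\]
Averaging over $i\le n$ with $i\equiv a\pmod{2^r}$ and dividing by $n$, Lemma \ref{lem:the estimate about tau(n)} absorbs the remainder as $O(n^{-1/2+\varepsilon})$. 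It therefore suffices to establish
\[
M_n:=\frac{1}{n}\sum_{\substack{1\leq i\leq n\\i\equiv a\,(\bmod 2^r)}}\sum_{\substack{1\leq d_1d_2\leq i\\d_1\mid i,\,d_2\mid i+1\\ \gcd(d_1,d_2)=1}}\frac{\mu(d_1)\mu(d_2)}{(d_1d_2)^{k-1}}=\frac{1}{2^r}\prod_p\bigg(1-\frac{2}{p^k}\bigg)+O(n^{\varepsilon-1/2}).
\]

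Next, I would exchange the order of summation and count solutions to the simultaneous system $i\equiv a\pmod{2^r}$, $i\equiv 0\pmod{d_1}$, $i\equiv -1\pmod{d_2}$ via the Chinese Remainder Theorem. Since $\mu(d_1)\mu(d_2)\neq 0$ forces $d_1,d_2$ squarefree and $\gcd(d_1,d_2)=1$ rules out both being even, only three $2$-adic patterns arise: (odd, odd), (even, odd), and (odd, even). CRT gives
\[
\#\{1\leq i\leq n:\text{system holds}\}=\frac{n}{\mathrm{lcm}(2^r,d_1,d_2)}+O(1)
\]
whenever the $2$-adic compatibility condition on $a$ is met, and $0$ otherwise. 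Concretely, (odd, odd) is always compatible with count $n/(2^r d_1d_2)+O(1)$; (even, odd) requires $a$ even and, writing $d_1=2d_1'$, produces count $n/(2^r d_1'd_2)+O(1)=2n/(2^r d_1 d_2)+O(1)$; (odd, even) requires $a$ odd and analogously contributes a factor of $2$.

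Substituting these counts into $M_n$ and extending the range $d_1d_2\leq i$ to $d_1d_2\leq n$ at a cost of $O(n^{\varepsilon-1/2})$ (paralleling the proof of Lemma \ref{lem:sum_{d_1d_2<n}=}), the main term becomes an Euler product divided by $2^r$ whose local factors record the admissible $2$-adic patterns. At every odd prime $p$ only the (odd, odd) configuration contributes, yielding $1-2/p^k$. At $p=2$ the (odd, odd) configuration contributes $1$, while \emph{exactly one} of (even, odd) or (odd, even) is activated depending on the parity of $a$, contributing $2\cdot\mu(2)/2^k=-2/2^k$ — the extra factor of $2$ coming precisely from the $\mathrm{lcm}$ calculation. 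Hence the local factor at $p=2$ is $1-2/2^k$, uniformly in $a$, and
\[
M_n=\frac{1}{2^r}\prod_p\bigg(1-\frac{2}{p^k}\bigg)+O(n^{\varepsilon-1/2})
\]
as desired.

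The main obstacle is the $2$-adic bookkeeping: one must verify that the asymmetric Case (even, odd)/(odd, even), whose activation depends on the parity of $a$, is precisely calibrated by the factor $2$ from the $\mathrm{lcm}$ so as to restore symmetry in the final Euler product. This is what explains the surprising uniformity of $\gamma_k(a;2^r)$ in $a$, in sharp contrast with the $a$-dependence of $\delta_k(a;2^r)$ in Theorem \ref{thm:delta(a,m)=}, where no analogous pairing of cases occurs.
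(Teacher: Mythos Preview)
Your proposal is correct and follows essentially the same route as the paper: reduce via \eqref{eq:E(X_iX_i+1)=(2)}, interchange the order of summation, solve the simultaneous congruence system by CRT, and extract an Euler product. The only difference is organizational: the paper splits into cases according to the value of $a$ (treating $a=0$ and $a=2^r-1$ first, then reducing every other residue to one of these via $\mu(d_j)=0$ for $4\mid d_j$), whereas you split according to the $2$-adic type of $(d_1,d_2)$ and observe that exactly one of the types (even,\,odd) or (odd,\,even) is activated by the parity of $a$, each yielding the same contribution $-2/2^k$ after the factor~$2$ from the $\mathrm{lcm}$. Your organization has the merit of making the $a$-independence transparent a priori rather than a posteriori. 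One minor phrasing issue: ``extending the range $d_1d_2\le i$ to $d_1d_2\le n$'' is not quite what happens---after swapping the sums the outer range is already $d_1d_2\le n$; what you actually do is absorb the $O(1)$ from the CRT count (cost $O(n^{-1+\varepsilon})$) and then extend the remaining finite Dirichlet sum to an infinite one (tail $O(n^{1-k+\varepsilon})$), both well within your stated error.
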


\begin{proof}
By \eqref{eq:E(X_iX_i+1)=(2)}, we obtain
\begin{equation}\label{eq:E(R(a,2^l))=}
\mathbb{E}\big(\overline{R}_{n,k}(a;2^r)\big)=\frac{1}{n}\sum_{\substack{1\leq i\leq n\\i\equiv a(\bmod 2^r)}}\mathbb{E}(X_iX_{i+1})=G_{n,k}(a;2^r)+O\big(n^{-1/2+\varepsilon}\big),
\end{equation}
where we used Lemma \ref{lem:the estimate about tau(n)} to estimate the big-$O$ term and
$$
G_{n,k}(a;2^r)=\frac{1}{n}\sum_{\substack{1\leq i\leq n\\i\equiv a(\bmod 2^r)}}\sum_{\substack{1\leq d_1d_2\leq i\\d_1\mid i,d_2\mid i+1\\ \gcd(d_1,d_2)=1}}\frac{\mu(d_1)\mu(d_2)}{(d_1d_2)^{k-1}}.
$$
For $G_{n,k}(a;2^r)$, we change the order of the summations, then
$$
G_{n,k}(a;2^r)=\frac{1}{n}\sum_{\substack{1\leq d_1d_2\leq n\\ \gcd(d_1,d_2)=1}}\frac{\mu(d_1)\mu(d_2)}{(d_1d_2)^{k-1}}\sum_{\substack{d_1d_2\leq i\leq n\\i\equiv a(\bmod 2^r)\\i\equiv0(\bmod d_1)\\i\equiv-1(\bmod d_2)}}1.
$$
Notice that the system of congruence equations
\begin{align}\label{eq:system of congruence equation(2)}
\begin{cases}
i\equiv a(\bmod\ 2^r)\\
i\equiv 0(\bmod\ d_1)\\
i\equiv -1(\bmod\ d_2)
\end{cases}
\end{align}
has solutions if and only if $\gcd(2^r,d_1)\mid a,\ \gcd(2^r,d_2)\mid a+1$ and $\gcd(d_1,d_2)=1$, and in which case, it has solutions
 $i\equiv c_2\big(\bmod\ \text{lcm}(2^r,d_1d_2)\big)$ for some $c_2\in\mathbb{Z}$.

If $a=0$ or $2^r-1$, we only consider the case $a=0$, and $a=2^r-1$ is the same. For $a=0$, note that \eqref{eq:system of congruence equation(2)} has solutions if and only if $\gcd(2^r,d_2)=\gcd(d_1,d_2)=1$, we have
$$
\begin{aligned}
 G_{n,k}(0;2^r)&=\frac{1}{n}\sum_{\substack{1\leq d_1d_2\leq n\\ \gcd(d_1,d_2)=1\\ \gcd(2^r,d_2)=1}}\frac{\mu(d_1)\mu(d_2)}{(d_1d_2)^{k-1}}\Big(\frac{\gcd(2^r,d_1d_2)n}{2^rd_1d_2}+O(1)\Big)\\
&=\frac{1}{2^r}\sum_{\substack{1\leq d_1d_2\leq n\\ \gcd(2^r,d_2)=1}}\frac{\gcd(2^r,d_1d_2)\mu(d_1d_2)}{(d_1d_2)^{k}}+O\Big(\frac{1}{n}\sum_{1\leq d_1d_2\leq n}\frac{1}{d_1d_2}\Big).
\end{aligned}
$$
Letting $d_1d_2=h$ and using Lemma \ref{lem:the estimate about tau(n)} to estimate the big-$O$ term, we obtain
$$
G_{n,k}(0;2^r)=\frac{1}{2^r}\sum_{1\leq h\leq n}\frac{\gcd(2^r,h)\mu(h)v(h,r)}{h^{k}}+O(n^{-1+\varepsilon}),
$$
where
$$
v(h,r)=\sum_{\substack{d\mid h\\ \gcd(2^r,d)=1}}1\ll\tau(h)\ll_{\varepsilon} h^{\varepsilon}.
$$
Thus by \eqref{eq:sum_{d>D}1/d^{1-theta}<<}, we obtain
$$
G_{n,k}(0;2^r)=\frac{1}{2^r}\sum_{h=1}^\infty\frac{\gcd(2^r,h)\mu(h)v(h,r)}{h^{k}}+O(n^{-1+\varepsilon}).
$$
By the fact that  $\gcd(2^r,h)$, $\mu(h)$ and  $v(h,r)$ are multiplicative in the variable $h$, we derive
\begin{equation}\label{eq:G(a,2^l) for a=0}
G_{n,k}(0;2^r)=\frac{1}{2^{r}}\prod_{p}\bigg(1-\frac{2}{p^k}\bigg)+O(n^{-1+\varepsilon}).
\end{equation}

If $a\neq0,\ 2^r-1$,  we only consider the case when $a$ is even, and it can be discussed in the same way for odd $a$. Suppose $\gcd(2^r,a)=2^b$, $1\leq b\leq r-1$, then \eqref{eq:system of congruence equation(2)} has solutions if and only if $\gcd(2^r,d_2)=\gcd(d_1,d_2)=1$ and $2^{b+1}\nmid d_1$. Therefore
$$
 G_{n,k}(a;2^r)=\frac{1}{n}\sum_{\substack{1\leq d_1d_2\leq n\\ \gcd(d_1,d_2)=1\\ \gcd(2^r,d_2)=1\\2^{b+1}\nmid d_1}}\frac{\mu(d_1)\mu(d_2)}{(d_1d_2)^{k-1}}\Big(\frac{\gcd(2^r,d_1d_2)n}{2^rd_1d_2}+O(1)\Big)=G_{n,k}(0;2^r),
$$
here we used the fact that $\mu(d_1)=0$ for $2^{b+1}\mid d_1$.
Combining this with \eqref{eq:G(a,2^l) for a=0} and  \eqref{eq:E(R(a,2^l))=} gives our results.
\end{proof}

\begin{proposition}\label{prop:E(R(a,p_1))=}
Let $q\geq 1$ and $k\geq 2$ be integers. Then for any prime $p_1\geq 3$ and any $\varepsilon>0$, we have
$$
\mathbb{E}\big(\overline{R}_{n,k}(0\ {\rm{or}}\ p_1-1;p_1)\big)=\frac{p_1^{k-1}-1}{p_1^{k}-2}\prod_{p}\bigg(1-\frac{2}{p^k}\bigg)+O_{\mathcal{A},k,q,\varepsilon}(n^{-1/2+\varepsilon})
$$
and
$$
\mathbb{E}\big(\overline{R}_{n,k}(a;p_1)\big)=\frac{p_1^{k-1}}{p_1^{k}-2}\prod_{p}\bigg(1-\frac{2}{p^k}\bigg)+O_{\mathcal{A},k,q,\varepsilon}(n^{-1/2+\varepsilon})\quad \text{if}~a\neq 0,\ p_1-1
$$
as $n\rightarrow\infty$, where $p$ runs over all primes.
\end{proposition}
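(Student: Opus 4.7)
The strategy closely mirrors the proof of Proposition~\ref{prop:E(R(a,2^l))=}, replacing the modulus $2^r$ by the odd prime $p_1$ and tracking how the case analysis changes. Starting from \eqref{eq:E(X_iX_i+1)=(2)} and Lemma~\ref{lem:the estimate about tau(n)}, I would write
$$
\mathbb{E}\big(\overline{R}_{n,k}(a;p_1)\big)=G_{n,k}(a;p_1)+O(n^{-1/2+\varepsilon}),
$$
where
$$
G_{n,k}(a;p_1):=\frac{1}{n}\sum_{\substack{1\leq i\leq n\\ i\equiv a(\bmod p_1)}}\sum_{\substack{1\leq d_1d_2\leq i\\d_1\mid i,\,d_2\mid i+1\\ \gcd(d_1,d_2)=1}}\frac{\mu(d_1)\mu(d_2)}{(d_1d_2)^{k-1}}.
$$
After swapping the order of summation, the inner count reduces to the number of $i\in[d_1d_2,n]$ satisfying
$$
i\equiv a(\bmod p_1),\qquad i\equiv 0(\bmod d_1),\qquad i\equiv -1(\bmod d_2),
$$
which by the Chinese Remainder Theorem contributes $\gcd(p_1,d_1d_2)\,n/(p_1 d_1 d_2)+O(1)$ once the compatibility conditions hold.

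The next step is the case split based on $a$. Since $p_1$ is prime, $\gcd(p_1,d_j)\in\{1,p_1\}$, and the system is solvable exactly when $\gcd(p_1,d_1)\mid a$ and $\gcd(p_1,d_2)\mid a+1$ (together with $\gcd(d_1,d_2)=1$). Thus:
\begin{itemize}
\item For $a=0$ the condition becomes $p_1\nmid d_2$ (with no restriction on $d_1$).
\item For $a=p_1-1$ the condition becomes $p_1\nmid d_1$ (with no restriction on $d_2$).
\item For $a\neq 0,p_1-1$ we need both $p_1\nmid d_1$ and $p_1\nmid d_2$, i.e.\ $p_1\nmid d_1d_2$.
\end{itemize}
In each case I set $h=d_1d_2$ and control the tail via \eqref{eq:sum_{d>D}1/d^{1-theta}<<} exactly as in Proposition~\ref{prop:E(R(a,2^l))=}, ending with
$$
G_{n,k}(a;p_1)=\frac{1}{p_1}\sum_{h=1}^{\infty}\frac{\gcd(p_1,h)\,\mu(h)\,v(h,a)}{h^{k}}+O(n^{-1+\varepsilon}),
$$
where $v(h,a)$ counts the allowed coprime factorizations $h=d_1d_2$.

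The final step is to evaluate the Dirichlet series as an Euler product. Multiplicativity of $\gcd(p_1,\cdot)$, $\mu$, and $v(\cdot,a)$ gives local factors $(1-2/p^k)$ at every prime $p\neq p_1$. At $p=p_1$ the local factor depends on the case: for $a=0$ only the factorization $d_1=p_1,\,d_2=1$ is allowed, contributing $1-p_1/p_1^{k}=1-1/p_1^{k-1}$; the case $a=p_1-1$ is symmetric and yields the same local factor by exchanging $d_1\leftrightarrow d_2$; for $a\neq 0,p_1-1$ the local factor at $p_1$ is simply $1$ because $p_1\nmid h$ is forced. Dividing through by $(1-2/p_1^k)$ to reintroduce the omitted Euler factor in $\prod_p(1-2/p^k)$ produces
$$
\frac{1}{p_1}\cdot\frac{1-1/p_1^{k-1}}{1-2/p_1^k}=\frac{p_1^{k-1}-1}{p_1^k-2}\qquad\text{and}\qquad \frac{1}{p_1}\cdot\frac{1}{1-2/p_1^k}=\frac{p_1^{k-1}}{p_1^k-2},
$$
giving the two stated expressions.

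I expect no serious obstacle; the delicate bookkeeping lies in (a) correctly identifying the solvability conditions modulo $p_1$ for each residue class $a$, and (b) verifying that the local Euler factor at $p_1$ is computed with the right multiplicity, since $h$ is squarefree (because of $\mu(h)$) and each prime divisor has exactly two coprime placements into $(d_1,d_2)$, only some of which survive the constraint. The variance estimate needed for Lemma~\ref{lem:lim S_n=u} proceeds verbatim as in Proposition~\ref{prop:V(R_n)=}, restricting $i,j$ to the progression $a\pmod{p_1}$ and using \eqref{eq:E}, so together with the expectation above it finishes the proof of the $p_1$-part of Theorem~\ref{thm:gamma_k{a;m}=}.
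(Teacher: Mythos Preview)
Your proposal is correct and follows essentially the same route as the paper: you start from \eqref{eq:E(X_iX_i+1)=(2)}, reduce to $G_{n,k}(a;p_1)$, swap sums, analyze the CRT solvability conditions in the three cases $a=0$, $a=p_1-1$, $a\neq 0,p_1-1$, substitute $h=d_1d_2$, and evaluate the resulting Euler product. The only cosmetic difference is that the paper handles $a=0$ and $a=p_1-1$ by a direct appeal to the argument of Proposition~\ref{prop:E(R(a,2^l))=}, whereas you spell out the local factor at $p_1$ explicitly; also note that your closing remark about the variance belongs to Proposition~\ref{prop:V(R(a,m))=} rather than to the present statement.
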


\begin{proof}
We recall \eqref{eq:E(X_iX_i+1)=(2)} that
\begin{equation}\label{eq:E(R(a,p_1))=}
\mathbb{E}\big(\overline{R}_{n,k}(a;p_1)\big)=G_{n,k}(a;p_1)+O\big(n^{-1/2+\varepsilon}\big),
\end{equation}
where
$$
G_{n,k}(a;p_1)=\frac{1}{n}\sum_{\substack{1\leq i\leq n\\i\equiv a(\bmod p_1)}}\sum_{\substack{1\leq d_1d_2\leq i\\d_1\mid i,d_2\mid i+1\\ \gcd(d_1,d_2)=1}}\frac{\mu(d_1)\mu(d_2)}{(d_1d_2)^{k-1}}=\frac{1}{n}\sum_{\substack{1\leq d_1d_2\leq n\\ \gcd(d_1,d_2)=1}}\frac{\mu(d_1)\mu(d_2)}{(d_1d_2)^{k-1}}\sum_{\substack{d_1d_2\leq i\leq n\\i\equiv a(\bmod p_1)\\i\equiv0(\bmod d_1)\\i\equiv-1(\bmod d_2)}}1.
$$

If $a=0$ or $p_1-1$, we discuss in the same way as in the proof of Proposition \ref{prop:E(R(a,2^l))=}, hence
\begin{equation}\label{eq:G(a,p_1) for a=0 or p_1-1}
G_{n,k}(a;p_1)=\frac{p_1^{k-1}-1}{p_1^{k}-2}\prod_{p}\bigg(1-\frac{2}{p^k}\bigg)+O(n^{-1+\varepsilon}).
\end{equation}

If $a\neq0 ~\text{or}\ p_1-1$, we know that
$$
\begin{cases}
i\equiv a(\bmod\ p_1)\\
i\equiv 0(\bmod\ d_1)\\
i\equiv -1(\bmod\ d_2)
\end{cases}
$$
has solutions if and only if $\gcd(p_1,d_1)=\gcd(p_1,d_2)=\gcd(d_1,d_2)=1$, then by the Chinese Reminder Theorem, we obtain
$$
G_{n,k}(a;p_1)=\frac{1}{n}\sum_{\substack{1\leq d_1d_2\leq n\\ \gcd(d_1,d_2)=1\\ \gcd(p_1,d_1)=1\\ \gcd(p_1,d_2)=1}}\frac{\mu(d_1)\mu(d_2)}{(d_1d_2)^{k-1}}\Big(\frac{n}{p_1d_1d_2}+O(1)\Big).
$$
Letting $d_1d_2=h$, we derive
$$
G_{n,k}(a;p_1)=\frac{1}{p_1}\sum_{\substack{1\leq h\leq n\\ \gcd(p_1,h)=1}}\frac{\mu(h)\tau(h)}{h^k}+O(n^{-1+\varepsilon}).
$$
If we consider the multiplicative function
$$
g(h):=\left\{
\begin{aligned}
&\frac{\mu(h)\tau(h)}{h^{k}},\ \ {\rm{if}}\ \gcd(p_1,h)=1,\\
&0,\ \ \ \ \ \ \ \ \ \ \ \ \ \ \ \rm{otherwise},
\end{aligned}
\right.
$$
then by Lemma \ref{lem:the estimate about tau(n)}, we have
$$
G_{n,k}(a;p_1)=\frac{1}{p_1}\sum_{1\leq h\leq n}g(h)+O(n^{-1+\varepsilon})=\frac{1}{p_1}\sum_{h=1}^\infty g(h)+O(n^{-1+\varepsilon}),
$$
which gives
\begin{equation}\label{eq:G(a,p_1) for a otherwise}
G_{n,k}(a;p_1)=\frac{p_1^{k-1}}{p_1^{k}-2}\prod_{p}\bigg(1-\frac{2}{p^k}\bigg)+O(n^{-1+\varepsilon}).
\end{equation}
Plugging \eqref{eq:G(a,p_1) for a=0 or p_1-1} and \eqref{eq:G(a,p_1) for a otherwise} into \eqref{eq:E(R(a,p_1))=} yields our desired results.
\end{proof}

Now we deal with the variance of $\overline{R}_{n,k}(a;m)$.
\begin{proposition}\label{prop:V(R(a,m))=}
Let $q,\ r\geq 1$ and $k\geq 2$ be integers. Then for any $\varepsilon>0$, we have
$$
\mathbb{V}\big(\overline{R}_{n,k}(a;m)\big)=O_{\mathcal{A},k,q,\varepsilon}(n^{-1/2+\varepsilon})
$$
as $n\rightarrow\infty$.
\end{proposition}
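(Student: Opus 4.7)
The plan is to imitate the variance computation in Proposition \ref{prop:V(R_n)=}, inserting the congruence restriction $i \equiv a \pmod{m}$ in every index of summation. Concretely, write
$$
\mathbb{V}\big(\overline{R}_{n,k}(a;m)\big) = \mathbb{E}\big((\overline{R}_{n,k}(a;m))^2\big) - \big(\mathbb{E}(\overline{R}_{n,k}(a;m))\big)^2,
$$
and show that each of the two terms matches $(G_{n,k}(a;m))^2$ to within $O(n^{-1/2+\varepsilon})$, where $G_{n,k}(a;m)$ is the arithmetic sum extracted in the proofs of Propositions~\ref{prop:E(R(a,2^l))=} and \ref{prop:E(R(a,p_1))=}. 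For the squared expectation, Propositions~\ref{prop:E(R(a,2^l))=} and \ref{prop:E(R(a,p_1))=} immediately give $\big(\mathbb{E}(\overline{R}_{n,k}(a;m))\big)^2 = (G_{n,k}(a;m))^2 + O(n^{-1/2+\varepsilon})$, using that $G_{n,k}(a;m)$ is bounded by the explicit formulas there.

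For the second moment, expand
$$
\mathbb{E}\big((\overline{R}_{n,k}(a;m))^2\big) = \frac{2}{n^2}\sum_{\substack{1\leq i<j\leq n\\ i,j\equiv a(\bmod m)}}\mathbb{E}(X_iX_{i+1}X_jX_{j+1}) + \frac{1}{n^2}\sum_{\substack{1\leq i\leq n\\ i\equiv a(\bmod m)}}\mathbb{E}((X_iX_{i+1})^2).
$$
The diagonal sum contributes $O(1/n)$ since $X_i\in\{0,1\}$. For the off-diagonal sum I would plug in the factorization $\mathbb{E}(X_iX_{i+1}X_jX_{j+1}) = A(\mathcal{A},k,i)\,B(\mathcal{A},k,i,j)$ obtained in \eqref{eq:E(X_iX_i+1X_jX_j+1)=AB} together with the asymptotic expression \eqref{eq:E}. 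Applying Lemma~\ref{lem:the estimate about tau(n)} to the tail terms bounds the error by $O(n^{3/2+\varepsilon})$ exactly as in the proof of Proposition~\ref{prop:V(R_n)=}.

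The remaining main term is
$$
\sum_{\substack{1\leq i<j\leq n\\ i,j\equiv a(\bmod m)}}\;\sum_{\substack{1\leq d_1d_2\leq i\\ d_1\mid i,\,d_2\mid i+1\\ \gcd(d_1,d_2)=1}}\frac{\mu(d_1)\mu(d_2)}{(d_1d_2)^{k-1}}\;\sum_{\substack{1\leq e_1e_2\leq j\\ e_1\mid j,\,e_2\mid j+1\\ \gcd(e_1,e_2)=1}}\frac{\mu(e_1)\mu(e_2)}{(e_1e_2)^{k-1}},
$$
and adding back the diagonal $i=j$ introduces an error of $O(n^{1+\varepsilon})$ by \eqref{eq:formula(1)}. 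The resulting double sum factors as
$$
\frac{1}{2}\bigg(\sum_{\substack{1\leq i\leq n\\ i\equiv a(\bmod m)}}\;\sum_{\substack{1\leq d_1d_2\leq i\\ d_1\mid i,\,d_2\mid i+1\\ \gcd(d_1,d_2)=1}}\frac{\mu(d_1)\mu(d_2)}{(d_1d_2)^{k-1}}\bigg)^2 + O(n^{1+\varepsilon}) = \frac{n^2}{2}(G_{n,k}(a;m))^2 + O(n^{3/2+\varepsilon}),
$$
by the definition of $G_{n,k}(a;m)$ and the bound coming from Propositions~\ref{prop:E(R(a,2^l))=} and \ref{prop:E(R(a,p_1))=}. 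Dividing by $n^2/2$ yields $\mathbb{E}\big((\overline{R}_{n,k}(a;m))^2\big) = (G_{n,k}(a;m))^2 + O(n^{-1/2+\varepsilon})$, and subtracting the two pieces gives the claim.

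The computation is essentially routine given the machinery already developed; the only point requiring mild care is ensuring that the congruence constraint $i, j \equiv a \pmod m$ is compatible with the index swap that produces the factorized sum in the main term. Since restricting both indices to the same arithmetic progression and then symmetrizing over $i < j$ against $j < i$ still yields a square, this step goes through verbatim. Putting everything together, Theorem~\ref{thm:gamma_k{a;m}=} then follows from Propositions~\ref{prop:E(R(a,2^l))=}, \ref{prop:E(R(a,p_1))=}, \ref{prop:V(R(a,m))=} and Lemma~\ref{lem:lim S_n=u}.
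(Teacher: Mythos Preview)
Your proposal is correct and follows essentially the same approach as the paper's own proof: both reduce to showing that $\mathbb{E}\big((\overline{R}_{n,k}(a;m))^2\big)$ and $\big(\mathbb{E}(\overline{R}_{n,k}(a;m))\big)^2$ each equal $(G_{n,k}(a;m))^2+O(n^{-1/2+\varepsilon})$, using the expansion \eqref{eq:E}, Lemma~\ref{lem:the estimate about tau(n)} for the error terms, and the diagonal-completion trick via \eqref{eq:formula(1)} to factor the off-diagonal sum into a square. The only cosmetic difference is that the paper states the diagonal bound by invoking Propositions~\ref{prop:E(R(a,2^l))=} and \ref{prop:E(R(a,p_1))=} rather than the cruder $X_i\in\{0,1\}$, and it does not spell out the intermediate $O(n^{3/2+\varepsilon})$ bookkeeping as explicitly as you do.
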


\begin{proof}
 Similar as in the proof of Proposition \ref{prop:V(R_n)=}, we only need to handle $\big(\mathbb{E}\big(\overline{R}_{n,k}(a;m)\big)\big)^2$ and $\mathbb{E}\big(\big(\overline{R}_{n,k}(a;m)\big)^2\big)$.

 By Proposition \ref{prop:E(R(a,2^l))=} and Proposition \ref{prop:E(R(a,p_1))=}, we have
 $$
 \big(\mathbb{E}\big(\overline{R}_{n,k}(a;m)\big)\big)^2=\big(G_{n,k}(a,m)\big)^2+O(n^{-1/2+\varepsilon}).
 $$
 For $\mathbb{E}\big(\big(\overline{R}_{n,k}(a;m)\big)^2\big)$, we write
$$
\mathbb{E}\big(\big(\overline{R}_{n,k}(a;m)\big)^2\big)=\frac{2}{n^2}\sum_{\substack{1\leq i<j\leq n\\i,j\equiv a(\bmod m)}}\mathbb{E}({X}_iX_{i+1}{X}_jX_{j+1})+\frac{1}{n^2}\sum_{\substack{1\leq i\leq n\\i\equiv a(\bmod m)}}\mathbb{E}\big(({X}_iX_{i+1})^2\big).
$$
By Propositions \ref{prop:E(R(a,2^l))=} and \ref{prop:E(R(a,p_1))=}, the second term on the right hand side of the above formula is $\ll 1/n$, and for the first term, we recall \eqref{eq:E} and add the diagonal terms, then
$$
\begin{aligned}
\frac{2}{n^2}\sum_{\substack{1\leq i<j\leq n\\i,j\equiv a(\bmod m)}}\mathbb{E}({X}_iX_{i+1}{X}_jX_{j+1})&=\Big(\frac{1}{n}\sum_{\substack{1\leq i\leq n\\i\equiv a(\bmod m)}}\sum_{\substack{1\leq d_1d_2\leq i\\d_1\mid i,d_2\mid i+1\\ \gcd(d_1,d_2)=1}}\frac{\mu(d_1)\mu(d_2)}{(d_1d_2)^{k-1}}\Big)^2+O(n^{-1/2+\varepsilon})\\
&=\big(G_{n,k}(a,m)\big)^2+O(n^{-1/2+\varepsilon}),
\end{aligned}
$$
which gives
$$
\mathbb{E}\big(\big(\overline{R}_{n,k}(a;m)\big)^2\big)=\big(G_{n,k}(a,m)\big)^2+O(n^{-1/2+\varepsilon}).
$$

Now Theorem \ref{thm:gamma_k{a;m}=} follows from Propositions \ref{prop:E(R(a,2^l))=}, \ref{prop:E(R(a,p_1))=} and \ref{prop:V(R(a,m))=}, and Lemma \ref{lem:lim S_n=u}.
\bigskip
\end{proof}

\end{document}